\font\smallrm=cmr8
\font\tinyrm=cmr6
\font\bigmath=cmsy10 scaled \magstep 2
\newcommand{\emp}{\emptyset}
\newcommand{\ben}{\mathbb N}
\newcommand{\beq}{\mathbb Q}
\newcommand{\bez}{\mathbb Z}
\newcommand{\nhat}[1]{\{1,2,\ldots,#1\}}
\newcommand{\ohat}[1]{\{0,1,\ldots,#1\}}
\newcommand{\pf}{{\mathcal P}_f}
\newcommand{\mod}{\hbox{\rm mod }}
\newcommand{\supp}{\hbox{\rm supp}}
\newcommand{\subsupp}{\hbox{\smallrm supp}}
\newcommand{\bigtimes}{\hbox{\bigmath \char'2}}
\newcommand{\con}{\hbox{${}^{\frown}$}}
\newcommand{\inject}{\hbox{\vbox{\baselineskip=10 pt
\hbox{\vrule height 0 pt depth 0 pt \hskip 2 pt {\tinyrm 1-1}}
\vskip -5 pt
\hbox{$\longrightarrow$}}}}
\newtheorem{theorem}{Theorem}[section]
\newtheorem{corollary}[theorem]{Corollary}
\newtheorem{lemma}[theorem]{Lemma}
\newtheorem{question}[theorem]{Question}
\newtheorem{conjecture}[theorem]{Conjecture}
\newtheorem{proposition}[theorem]{Proposition}
\theoremstyle{definition}
\newtheorem{definition}[theorem]{Definition}
\title{Maximality of infinite partition regular matrices}
\date{}
\author{Neil Hindman
        \footnote{Department of Mathematics,
                 Howard University,
                  Washington, DC 20059, USA.\hfill\break
                  {\tt nhindman@aol.com}}
        \thanks{This author acknowledges support received from the National
                Science Foundation (USA) via Grant DMS-1160566.}
        \and
        Imre Leader
        \footnote{Department of Pure Mathematics and Mathematical Statistics,
                  Centre for Mathematical Sciences,
                  Wilberforce Road, Cambridge, CB3 0WB, UK.\hfill\break
                  {\tt i.leader@dpmms.cam.ac.uk}}
        \and
        Dona Strauss
        \footnote{Department of Pure Mathematics,
                  University of Leeds, Leeds LS2 9J2, UK.\hfill\break
                  {\tt d.strauss@hull.ac.uk}}                 
}
\begin{document}

\maketitle

\begin{abstract}
A finite or infinite matrix $A$ with rational entries (and only finitely many
non-zero entries in each row) is called {\it image partition regular} if,
whenever the natural numbers are finitely coloured, there is a vector $x$,
with entries in the natural numbers, such that $Ax$ is monochromatic. Many of
the classicial results of Ramsey theory are naturally stated in terms of
image partition regularity.

Our aim in this paper is to investigate maximality questions for image partition regular matrices. 
When is it possible to add rows on to $A$ and remain image partition regular? 
When can one add rows but `nothing new is produced'? What about adding
rows and also new variables? We prove some results about extensions of the
most interesting infinite systems, and make several conjectures. 

Perhaps our most
surprising positive result is a compatibility result for Milliken-Taylor
systems, stating that (in many cases) one may adjoin one Milliken-Taylor 
system to a translate of another and remain image partition regular. This is
in contrast to earlier results, which had suggested a strong inconsistency
between different Milliken-Taylor systems. Our main tools for this are some algebraic
properties of $\beta \ben$, the Stone-\v Cech compactification of the natural numbers.

\bigskip\noindent \textbf{Keywords:} Image partition regular; Ramsey Theory;
central sets\hfill\break
\textbf{Mathematics Subject Classification}: 05D10
\end{abstract}

\vfill\eject

\section{Introduction}

One of the earliest theorems in Ramsey Theory is Schur's Theorem \cite{S}, which says that
if $\ben$ is finitely coloured, then there exist $x_0$ and $x_1$ such that\break
$\{x_0, x_1, x_0+x_1\}$ is monochromatic.  Some time later, van der Waerden \cite{W} proved
that whenever $\ben$ is finitely coloured and $k\in\ben$, there is a monochromatic length
$k$ arithmetic progression.  Schur's Theorem and the length $4$ version of van der Waerden's
Theorem are precisely the assertions that the following two matrices are
image partition regular.
$$\left(\begin{array}{cc} 1&0\\ 0&1\\ 1&1\end{array}\right)\hskip 30 pt
\left(\begin{array}{cc} 1&0\\ 1&1\\ 1&2\\ 1&3\end{array}\right)$$

Here we say that a matrix $A$ with rational entries, and only finitely many
non-zero entries in each row, is {\it image partition regular} or {\it IPR} if,
whenever the natural numbers are finitely coloured, there is a vector $x$,
with entries in the natural numbers, such that $Ax$ is monochromatic (meaning
that all the entries of $Ax$ are natural numbers of the same colour).

In the finite case, the IPR matrices are well understood. Roughly speaking, 
they are
the `first-entries' matrices, meaning those for which all the rows whose first
non-zero entry lies in a given column have the same entry in that column. See
Section 2 for a precise statement about this. 

[We have relegated to Section 2 background facts about finite matrices, and 
also about
the Stone-\v Cech compactification $\beta\ben$. The reader who is not 
especially interested
in such things can just skip this section and refer back to 
it when necessary.]

In the infinite case, much less is known. As a `trivial' example, note that,
given a collection of finite matrices known to be IPR, it is
possible to construct infinite IPR matrices.  For example,
if for $k\in\ben$,
$$ A_k=\left(\begin{array}{cc} 1&0\\ 1&1\\ \vdots&\vdots\\ 1&k\end{array}\right)
\hbox{ and }B=\left(\begin{array}{cccc}A_2&{\bf O}&{\bf O}&\ldots\\
{\bf O}&A_3&{\bf O}&\ldots\\
{\bf O}&{\bf O}&A_4&\ldots\\
\vdots&\vdots&\vdots&\ddots\end{array}\right)\,,$$
then $B$ is IPR (since given any finite colouring there
must be arbitrarily long arithmetic progressions in one of the colour classes, and
thus arithmetic progressions of every length in that class).
 
What is probably the first nontrivial example of an infinite IPR matrix is the {\it Finite Sums\/} matrix.
It was proved in \cite{H} that whenever $\ben$ is finitely coloured, there exists
an infinite sequence $\langle x_n\rangle_{n=0}^\infty$ such that
$FS(\langle x_n\rangle_{n=0}^\infty)$ is monochromatic, where 
$$\textstyle FS(\langle x_n\rangle_{n=0}^\infty)=\{\sum_{n\in F}\,x_n:F\in\pf(\omega)\}$$ and
$\pf(\omega)$ is the set of finite nonempty subsets of $\omega$.  
We remark that this is the assertion
that ${\bf F}$ is IPR, where all entries of ${\bf F}$ are $0$ or $1$ and
for each $i<\omega$, $\sum_{j=0}^\infty f_{i,j}2^j=i+1$.  That is,
$${\bf F}=\left(\begin{array}{cccc}1&0&0&\ldots\\
0&1&0&\ldots\\
1&1&0&\ldots\\
0&0&1&\ldots\\
1&0&1&\ldots\\
0&1&1&\ldots\\
1&1&1&\ldots\\
\vdots&\vdots&\vdots&\ddots\end{array}\right)$$
However, most of the time we will not write matrices explicitly, being content
to give the `linear system' form (as in the 
`$FS(\langle x_n\rangle_{n=0}^\infty)$' form above).

Using the Finite Sums Theorem as a tool, Milliken \cite{M} and Taylor \cite{T} independently
established the fact that each of a whole class of matrices are 
IPR.  We shall describe these matrices now.

\begin{definition}\label{defcompress} Let $k\in\omega$ and let
$\vec a=\langle a_0,a_1,\ldots,a_k\rangle$ be a sequence
in $\bez$ such that $\vec a\neq\vec 0$.  
The sequence $\vec a$ is {\it compressed\/} if and only if no $a_i=0$ and for
each $i\in\ohat{k-1}$, $a_i\neq a_{i+1}$.  The sequence $c(\vec a)=\langle c_0,c_1,\ldots, c_m\rangle$
is the compressed sequence obtained from $\vec a$ by first deleting all occurrences of $0$ and then
deleting any entry which is equal to its successor. Then $c(\vec a)$ is called the {\it compressed 
form of\/} $\vec a$.  And $\vec a$ is said to be a {\it compressed sequence\/} if $\vec a=c(\vec a)$.
\end{definition}

For example $c(\langle -2,0,-2,3,3,0,3,1,-2\rangle)=\langle -2,3,1,-2\rangle$. 
If $\vec a$ is an infinite sequence with finitely many nonzero entries, then
$c(\vec a)$ is defined analoguously, by first deleting the trailing $0$'s.

\begin{definition}\label{MTax} Let $k\in\omega$, let
$\vec a=\langle a_0,a_1,\ldots,a_k\rangle$ be a compressed sequence
in $\bez\setminus\{0\}$ with $a_k>0$, and let $\vec x=\langle x_n\rangle_{n=0}^\infty$.  
Then $MT(\vec a,\vec x)=\{\sum_{i=0}^ka_i\sum_{t\in F_i}x_t:F_0,F_1,\ldots,F_k\in \pf(\omega)
\hbox{ and }F_0<F_1<\ldots<F_k\}$, where for $F,G\in\pf(\omega)$, $F<G$ means
$\max F<\min G$. 
\end{definition}

Note that the case $\vec a=\langle 1 \rangle$ of the Milliken-Taylor theorem
(Theorem \ref{MTipr} below) is precisely the Finite Sums Theorem.

\begin{theorem}\label{MTipr}  Let $k\in\omega$ and let
$\vec a=\langle a_0,a_1,\ldots,a_k\rangle$ be a compressed sequence
in $\bez\setminus\{0\}$ with $a_k>0$.
Then whenever $\ben$ is finitely coloured there exists an infinite sequence
$\vec x=\langle x_n\rangle_{n=0}^\infty$ such that $MT(\vec a,\vec x)$ is
monochromatic.
\end{theorem}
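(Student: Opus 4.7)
The plan is to use the algebraic structure of $(\beta\bez,+)$, which, as recalled in Section~2, is a compact right-topological semigroup; in particular its subsemigroup $\beta\ben$ contains idempotents. Fix an idempotent $p\in\beta\ben$. For each $a\in\bez\setminus\{0\}$ let $a\cdot p$ denote the ultrafilter on $\bez$ defined by declaring $A\in a\cdot p$ precisely when $\{x\in\ben:ax\in A\}\in p$, and form the sum
$$q\;=\;a_0\cdot p\,+\,a_1\cdot p\,+\,\cdots\,+\,a_k\cdot p\;\in\;\beta\bez\,.$$
Given a finite colouring of $\ben$, extend it arbitrarily to $\bez$; then some colour class $A$ lies in $q$. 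The theorem thereby reduces to showing that whenever $A\in q$, there is an increasing sequence $\langle x_n\rangle$ in $\ben$ with $MT(\vec a,\vec x)\subseteq A$, since $MT(\vec a,\vec x)$ must then sit inside a single original colour class.

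I would build $\langle x_n\rangle$ by induction, maintaining at stage $n$ the following invariant: for every ``state'' consisting of an index $j\in\ohat{k}$ together with a partial sum $s=\sum_{i=0}^{j}a_i\sum_{t\in F_i}x_t$ arising from disjoint increasing blocks $F_0<\cdots<F_j$ drawn from $\{0,\ldots,n-1\}$, the shifted set $-s+A$ lies in the ``tail'' ultrafilter $a_{j+1}\cdot p+\cdots+a_k\cdot p$ (interpreted as the requirement $s\in A$ when $j=k$ and $F_j\ne\emptyset$). The key technical tool is the standard fact that for any $B\in p$, the set $B^{\star}=\{y\in B:-y+B\in p\}$ is again in $p$; iterating this through the definition of ultrafilter sum shows that the finitely many constraints imposed on the new element $x_n$ cut out a set in $p$, from which $x_n$ may be chosen as large as we like.

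The main obstacle is the bookkeeping for the state transitions. Each stage branches every existing state three ways --- ignore $x_n$, append $x_n$ to the current block $F_j$ (replacing $s$ by $s+a_jx_n$), or close $F_j$ and open $F_{j+1}=\{n\}$ (replacing $s$ by $s+a_{j+1}x_n$) --- and the invariant must be re-verified along each branch. The compressedness hypothesis $a_j\ne a_{j+1}$ is essential here: it is precisely what makes the two sum-changing branches yield genuinely different updates and thus genuinely different ultrafilter obligations, whose simultaneous satisfaction is exactly what the star operation delivers. The final hypothesis $a_k>0$ is used to guarantee that, with $x_n$ chosen to grow quickly enough at each stage, every element of $MT(\vec a,\vec x)$ is a positive integer, so the constructed $\vec x$ witnesses image partition regularity of the associated matrix.
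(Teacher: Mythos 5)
The paper does not actually prove Theorem~\ref{MTipr} from scratch: it cites Milliken and Taylor for the case of positive entries and \cite[Corollary 3.6]{HLSa} for the general case. What you have reconstructed is precisely the ultrafilter argument behind that citation (it is also the engine of \cite[Theorem 17.31]{HS}, on which the paper's own Theorem~\ref{MTtran} is modelled): take an idempotent $p$, form $q=a_0p+a_1p+\cdots+a_kp$, observe that $a_k>0$ forces $\ben\in q$ so some colour class $A$ of $\ben$ lies in $q$, and then run an induction in which each new $x_n$ is chosen from the intersection of finitely many sets in $p$ recording the ``tail'' obligations of all partial sums built so far. That is the right proof, and your outline of it is sound.

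Two cautions. First, the invariant as you state it does not quite close on the branch where $x_n$ is appended to the \emph{current} block $F_j$: there the partial sum changes from $s$ to $s+a_jx_n$, but your hypothesis on $s$ only says $-s+A\in a_{j+1}p+\cdots+a_kp$, which gives no control over a shift by $a_jx_n$. To handle this branch one must also carry the obligation attached to the parent state at level $j-1$ and use that $a_jp$ is itself an idempotent of $(\beta\bez,+)$ (because $x\mapsto a_jx$ extends to a homomorphism), so that one can take stars with respect to $a_jp$ and keep the running block-sum $a_j\sum_{t\in F_j}x_t$ inside the relevant set as the block grows; this is exactly what hypotheses (III) and (V) of Theorem~\ref{MTtran} are doing. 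Second, your claim that compressedness ($a_j\neq a_{j+1}$) is ``essential'' to the construction is not right: the induction works verbatim for an arbitrary tuple $\vec a$, and when $a_j=a_{j+1}$ the two sum-changing branches simply coincide, which causes no harm. Compressedness is only a normalisation, needed so that the statement matches the definition of an $MT(\vec a)$-matrix (and so that results such as Theorem~\ref{MTsep} can be stated cleanly); the hypothesis that genuinely carries weight in your argument is $a_k>0$, which is what puts $q$ in $\beta\ben$ rather than merely in $\beta\bez$.
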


\begin{proof} If each $a_i>0$, this is \cite[Theorem 2.2]{M} and \cite[Lemma 2.2]{T}.
The general case is a consequence of \cite[Corollary 3.6]{HLSa}.\end{proof}

In the sequel, we will occasionally need the matrix form of this. 

\begin{definition}\label{defMTM} Let $k\in\omega$, let
$\vec a=\langle a_0,a_1,\ldots,a_k\rangle$ be a compressed sequence
in $\bez\setminus\{0\}$, and let $A$ be an $\omega\times\omega$ matrix.
Then $A$ is an $MT(\vec a)$-matrix if and only if the rows of $A$ are
all rows $\vec r\in \bez^\omega$ such that $c(\vec r)=\vec a$.
The matrix $A$ is a {\it Milliken-Taylor matrix\/} if and only if
it is an $MT(\vec a)$-matrix for some $\vec a$.\end{definition}

Thus Theorem \ref{MTipr} asserts precisely that every Milliken-Taylor matrix
is IPR. It will also be convenient to use the notation $Im(\vec x)$ for the
set of the entries of a vector $\vec x$. So for example if 
$\vec a=\langle a_0,a_1,\ldots, 
a_k\rangle$ is a compressed sequence
in $\bez\setminus\{0\}$, $A$ is an $MT(\vec a)$-matrix, and 
$\vec x\in\ben^\omega$, then
$Im(A\vec x)$ is $MT(\vec a,\vec x)$.

One of the major differences between finite and infinite IPR
matrices is the following. It is a consequence of Theorem \ref{charipr}(d), and 
the fact that given any finite colouring of $\ben$, one colour class is central,
that one colour class will contain an image of each finite IPR
matrix. By way of contrast we have the following theorem of Deuber, 
Hindman, Leader and Lefmann.

\begin{theorem}\label{MTsep} Let $k, m\in\omega$ let
$\vec a=\langle a_0,a_1,\ldots,a_k\rangle$ and  
$\vec b=\langle b_0,b_1,\ldots,b_m\rangle$ be compressed sequences
in $\bez\setminus\{0\}$ with $a_k>0$ and $b_m>0$, such that 
there is no  positive rational
$r$ with $\vec a=r\vec b$, then there is a finite colouring of
$\ben$ such that there do not exist
sequences $\vec x=\langle x_n\rangle_{n=0}^\infty$ and
$\vec y=\langle y_n\rangle_{n=0}^\infty$ in $\ben$ such that 
$MT(\vec a,\vec x)\cup MT(\vec b,\vec y)$ is monochromatic.
\end{theorem}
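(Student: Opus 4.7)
The plan is to construct an explicit finite colouring $\chi$ of $\ben$ so that, for any sequence $\vec x$, the colour $\chi(v)$ of an element $v\in MT(\vec a,\vec x)$ encodes an ``invariant fingerprint'' of the coefficient sequence $\vec a$ that is independent of $\vec x$ and of the choice of $F_0<F_1<\ldots<F_k$, and likewise for $\vec b$. Since the hypothesis rules out $\vec a=r\vec b$ for any positive rational $r$, the two fingerprints will differ, and no $\chi$-colour class can contain both $MT(\vec a,\vec x)$ and $MT(\vec b,\vec y)$.

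The colouring will be multiplicative in nature. Fix a large integer $M$ (much larger than all $|a_i|$ and $|b_j|$) and let $\chi(n)$ be a function of the leading few nonzero blocks of $n$ in base~$M$, together with the spacing between them. Before applying $\chi$, I would make a preparatory thinning: given $\vec x,\vec y$ witnessing monochromaticity, a standard diagonal argument lets me replace them by subsequences $\vec x',\vec y'$ with $MT(\vec a,\vec x')\subseteq MT(\vec a,\vec x)$ and $MT(\vec b,\vec y')\subseteq MT(\vec b,\vec y)$, such that the terms grow so fast — for instance, $x'_{n+1}>M^{n}\bigl(\sum_i|a_i|\bigr)x'_n$ — that in the base-$M$ representation of any element of $MT(\vec a,\vec x')$ the contributions of distinct summands $a_i x'_t$ occupy disjoint ranges of digits.

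For such super-lacunary $\vec x'$, the base-$M$ expansion of $v=\sum_{i=0}^k a_i\sum_{t\in F_i}x'_t$ reads, from the top downward: a block encoding $a_k$ (coming from $a_k x'_{\max F_k}$), then either another $a_k$-block (if $|F_k|\ge 2$) or an $a_{k-1}$-block (if $|F_k|=1$), and so on. Peeling off a bounded number of top blocks extracts information about $a_k$ and the successive ratios $a_{k-1}/a_k,a_{k-2}/a_k,\ldots$. I would let $\chi$ record exactly this bounded piece of information, with enough colours that the set of $\chi$-values realised by $MT(\vec a,\vec x')$ determines $\vec a$ up to positive rational rescaling. A symmetric analysis applies to $MT(\vec b,\vec y')$, and comparing the two recovered fingerprints yields the required contradiction.

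The main obstacle I expect is controlling interference and cancellation in the base-$M$ expansion: since some of the $a_i$ may be negative, the inner sums $a_i\sum_{t\in F_i}x'_t$ contribute with both signs, so one must rule out any carry or cancellation reaching into the top few blocks that $\chi$ inspects. Super-lacunarity of $\vec x'$ (with a margin depending on $M$, $k$ and $\max|a_i|$) confines all such interference to strictly lower blocks, and choosing $M$ larger than $\sum_i|a_i|$ keeps every relevant partial sum inside a single well-delimited block. The other delicate point is invariance under the freedom to take $|F_i|\ge 2$; here the key observation is that all extra terms introduced by enlarging an $F_i$ sit at strictly smaller scales than $a_i x'_{\max F_i}$, so they never perturb the top blocks read by $\chi$.
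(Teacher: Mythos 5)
First, note that the paper does not actually prove Theorem \ref{MTsep}: it cites \cite[Theorem 3.1]{HLSb} (and \cite{DHLL} for the all-positive case), so your proposal has to be measured against those published arguments. Your high-level plan --- colour by a bounded amount of ``top of the base-$M$ expansion'' data, pass to a super-lacunary subsequence (legitimate, since $MT(\vec a,\vec x')\subseteq MT(\vec a,\vec x)$ for any subsequence $\vec x'$), and argue that the realised colours pin down $\vec a$ up to positive rational scaling --- is the right general shape, and is essentially what \cite{DHLL} does when all entries are positive. But two of your key claims fail as stated. The more serious one concerns negative entries: it is false that super-lacunarity ``confines all such interference to strictly lower blocks''. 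In ordinary base-$M$ arithmetic a single negative contribution sitting far below the leading term triggers a borrow that can propagate arbitrarily far upward through a run of zero digits of the dominant term: with $\vec a=\langle -1,1\rangle$, $x'_{t_1}=M^{100}$ and $x'_{t_0}=M^{10}$, the element $x'_{t_1}-x'_{t_0}$ has leading digit $M-1$ in position $99$ rather than $1$ in position $100$, so the top block your $\chi$ inspects is destroyed by a summand ninety orders of magnitude smaller, and no amount of lacunarity prevents this. This is exactly why \cite{HLSb} (and Section 4 of the present paper, cf.\ Lemma \ref{negbase}) work with expansions to the \emph{negative} base $-p$: there every integer, of either sign, has a unique expansion with digits in $\ohat{p-1}$, and adding a term supported on much lower positions genuinely cannot disturb the top digits. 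Your proposal is missing this device or any substitute for it.

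Second, even in the all-positive case the ``fingerprint'' is not an invariant of $\vec a$: the top block of $a_kx'_{\max F_k}$ does not encode $a_k$, because the leading digits of $a_kx$ depend on the leading digits of $x$ (the leading digit of $3x$ is not determined by $3$ alone). So the set of colours realised by $MT(\vec a,\vec x')$ depends on $\vec x'$ as well as on $\vec a$, and one cannot simply ``read off'' $\vec a$ up to scaling from it. The published proofs instead exploit monochromaticity across many related elements of the same MT-set --- comparing the colours of $x_t$, of $x_s+x_t$, and of full Milliken--Taylor sums as the $F_i$ vary --- to derive arithmetic relations (congruences on leading and trailing digits, on lengths of expansions, and on counts of digit patterns modulo a prime, much as in the proof of Theorem \ref{notrapidipr}) that jointly force $\vec a=r\vec b$. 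That combinatorial core, which is the bulk of the work in \cite{DHLL} and \cite{HLSb}, is absent from the proposal. A smaller point: a super-lacunary subsequence need not exist if $\vec x$ is bounded; this can only happen when all $a_i>0$, but it still needs an argument.
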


\begin{proof} \cite[Theorem 3.1]{HLSb}.  (The proof in the case 
all entries are positive was first done in \cite[Theorems 3.2 and 3.3]{DHLL}.)
\end{proof}

In fact, if it is not the case that there is a positive rational
$r$ such that $\vec a=r\vec b$, then there is a colouring as in
Theorem \ref{MTsep} that has only
two colours.  (This can be seen in a fashion similar to the proof of
\cite[Theorem 3.14]{DHLL} where the same result is proved in the case
that all entries are positive.)

The last of the special matrices with which we will be concerned
is a {\it DH-matrix}.  Roughly speaking, this is like the Finite Sums
system, except that, instead of each $x_n$ being a fixed singleton, it can
be taken from a given finite IPR system. The DH matrices are IPR 
(see \cite{DH}), and so
for example in any finite colouring of $\ben$ one can find a sequence of
arithmetic progressions $S_1,S_2,\ldots$, with $S_i$ having length $i$, such
that all the finite sums obtained by adding up one member from each of finitely
many of the $S_i$ have the same colour.

To be precise, we shall construct such a matrix as follows.
First fix an enumeration $\langle B_n\rangle_{n=0}^\infty$ of the 
finite IPR matrices with rational entries.  
For each $n$, assume that $B_n$ is a $u(n)\times v(n)$ matrix.
For each $i\in\ben$, let $\vec 0_i$ be the $0$ vector with $i$ entries.
Let {\bf D} be an $\omega\times\omega$ matrix with all rows
of the form $\vec r_0\con\vec r_1\con\vec r_2\con\ldots$ where
each $\vec r_i$ is either $\vec 0_{v(i)}$ or is a row of $B_i$, and all but
finitely many are $\vec 0_{v(i)}$.

\begin{definition}\label{defFSS} For each $n<\omega$ let 
$Y_n\in\pf(\beq)$.  Then $FS(\langle Y_n\rangle_{n=0}^\infty)=
\{\sum_{n\in F}x_n:F\in\pf(\omega)\hbox{ and }x\in\bigtimes_{n\in F}\, Y_n\}$.
Also, for $k\in\ben$, $FS(\langle Y_n\rangle_{n=0}^k)=
\{\sum_{n\in F}x_n:\emp\neq F\subseteq\ohat{k}\hbox{ and }
x\in\bigtimes_{n\in F}\, Y_n\}$.
Given $F\in\pf(\omega)$, $\sum_{n\in F}Y_n =
\{\sum_{n\in F}x_n:x\in\bigtimes_{n\in F}\, Y_n\}$.
\end{definition}

Thus $FS(\langle Y_n\rangle_{n=0}^\infty)$ is all finite sums choosing at
most one term from each $Y_n$. 
For each $n<\omega$, let $B_n$ be the $u(n)\times v(n)$ matrix used in the
construction of ${\bf D}$.  Define $k(0)=0$ and for $n\in\omega$, let
$k(n+1)=k(n)+v(n)$.
 Assume that $\vec x\in\beq^\omega$.
For each $n\in\omega$ let $\vec y_n\in \beq^{v(n)}$ be defined
by $y_n(t)=x_{k(n)+t}$ and let $Y_n=Im(B_n\vec y_n)$.
Then $Im({\bf D}\vec x)=FS(\langle Y_n\rangle_{n=0}^\infty)$.

The plan of the paper is as follows. In Section 3 we investigate the notion
of {\it maximal} IPR matrices, meaning matrices such that no new row (not
equal to any previous row) can be 
added in such a way that the resulting matrix is IPR. 
Finite matrices cannot have this property, and neither can $\bf F$.
We observe that matrices having all rows with a given constant row
sum are maximal IPR and conjecture that these are the only examples of
 maximal IPR matrices.

We consider $\bf F$ in more detail in
Section 4, giving a more restricted sense in which it is maximal (roughly
speaking, this is the situation where we insist that the variables have
disjoint support when written out in binary or similar).
 
In Section 5 we consider image maximality.
 
\begin{definition}\label{imagedom} Let $t,u,v,w\in\ben\cup\{\omega\}$, let
$A$ be a $t\times u$ matrix and let $B$ be a $v\times w$ matrix.
Then $A$ {\it image dominates\/ }$B$ if and only if,
for each $\vec x\in \ben^u$ there exists $\vec y\in \ben^w$ 
such that $Im(B\vec y)\subseteq Im(A\vec x)$.\end{definition}

Notice that if $A$ image dominates $B$ and $A$ is IPR, 
then so is $B$.  Notice also that trivially, if
$B$ is a finite IPR matrix, then the DH-matrix
{\bf D} image dominates $B$ (because $B=B_n$ for some $n$).

We say that a matrix $A$ is {\it image maximal\/}
provided that whenever $B$ is an IPR matrix extending $A$, 
that is $B$ consists of $A$ with some rows added, then $A$ image dominates 
$B$.

We show that 
any IPR finite extension of {\bf D}
is in fact image dominated by {\bf D} itself. We conjecture that {\bf D} is
image maximal, but have been unable to show this. This is perhaps the most
tantalising of all the open questions.

Finally, in Section 6 we turn our attention to a more general notion. We say
that an IPR matrix $A$ is {\it universally image maximal\/}
provided that whenever $B$ is an IPR matrix that
image dominates $A$, then $A$ image dominates $B$. In other words, this is
like image maximality but we do not insist that $B$ is an extension of $A$.

This section contains what are perhaps our most surprising results. While we 
know that obviously {\bf F} cannot be extended to an IPR matrix by adding on
any Milliken-Taylor system except {\bf F} itself, we show that one can add on translates of 
such matrices. In some sense this ought to be impossible, in light of
Theorem \ref{MTsep}. Similarly, it `ought' to be the case that  {\bf D}
is universally image maximal, but this turns out not to be the case: one can
add a translate of `DHMT', meaning the analogue of the {\bf D} but with
for example $\langle 2,1 \rangle$ in place of $\langle 1 \rangle$. 

We do not know any examples of universally image maximal systems.
 
In this paper {\it we shall always assume that any matrix that we consider
has finitely many nonzero entries in each row\/}. We also mention briefly that
the matrices with which we
will be dealing all have countably many rows and countably many columns,
so of course the rows and columns could be rearranged so that they were
all $u\times v$ matrices for some $u,v\in\ben\cup\{\omega\}$.  But it will be convenient, given 
$\omega\times \omega$ matrices $A$ and $B$ to discuss the matrices
$$\left(\begin{array}{c} A\\ B\end{array}\right)\,,
\hskip 20 pt \left(\begin{array}{cc} A& B\end{array}\right)\hbox{,\hskip 10 pt and}\hskip 10 pt
\left(\begin{array}{cc} A&{\bf O}\\{\bf O}& B\end{array}\right)$$
where ${\bf O}$ is the $\omega\times\omega$ matrix with all zeroes.
These are respectively $(\omega+\omega)\times\omega$, $\omega\times(\omega+\omega)$,
and $(\omega+\omega)\times(\omega+\omega)$ matrices. However, we are of course always
free to relabel these as $\omega\times\omega$ matrices, and we shall often implicitly do so.

\section{Background}\label{secbackground}

In his proof of a conjecture of Rado, Deuber \cite{D} proved that certain
matrices are IPR.  (He called the set of entries in an image of
such matrices an {\it $(m,p,c)$-set\/}.  We shall have more to say about these
later.)  Deuber's matrices were special cases of {\it first entries matrices\/}.
Since the concept of a first entries matrix has not turned out to be useful
for infinite matrices, we shall restrict our definition to finite matrices.

Given a matrix we shall follow the
custom of denoting the entry in row $i$ and column $j$ by the lower case
of the upper case letter which denotes the matrix.  So the entry in row
$0$ and column $3$ of the matrix $B$ is $b_{0,3}$.

\begin{definition}\label{firstentries} Let $u,v\in\ben$ and let $A$ be a $u\times v$ matrix
with entries from $\beq$.   Then $A$ is a {\it first entries matrix\/} if and only
if no row of $A$ is $\vec 0$
and whenever $i,j\in\ohat{u-1}$ and $$\begin{array}{rcl}
k&=&\min\{t\in\ohat{v-1}:a_{i,t}\neq 0\}\\
&=&\min\{t\in\ohat{v-1}:a_{j,t}\neq 0\}\,,\end{array}$$ then $a_{i,k}=a_{j,k}>0$.  An element
$b$ of $\beq$  is a {\it first entry\/} of $A$ if and only if there is some row $i$ of
$A$ such that $b=a_{i,k}$ where $k=\min\{t\in\ohat{v-1}:a_{i,t}\neq 0\}$.
\end{definition}

A few characterisations of finite IPR matrices were found
in \cite{HL}, including two computable characterisations. Several others
have been found since.  We list in the following theorem some characterisations
that will be of interest to us in this paper. (We shall describe {\it central\/}
sets later in this section.)

\begin{theorem}\label{charipr} Let $u,v\in\ben$ and let $A$
be a $u\times v$ matrix with entries from $\beq$.   The following statements
are equivalent.
\begin{itemize}
\item[(a)] A is IPR. 
\item[(b)] There exist $m\in \ben$ and a $u\times m$ first entries matrix
$B$ with entries from $\beq$  
such that given
any $\vec y\in\ben^m$ there is some $\vec x\in\ben^v$ with
$A\vec x=B\vec y$.  
\item[(c)] There exist $m\in \ben$, a $u\times m$ first entries matrix
$E$ with entries from $\omega$, and $c\in\ben$ such that  $c$ is the only first entry 
of $E$ and given
any $\vec y\in\ben^m$ there is some $\vec x\in\ben^v$ with
$A\vec x=E\vec y$.
\item[(d)] For every central set $C$ in $\ben$, there exists
$\vec x\in\ben^v$ such that $A\vec x\in C^u$.
\item[(e)] For each $\vec {r}\in\beq^v\setminus\{\vec {0}\}$ there
exists $b\in\beq\setminus\{0\}$ such that 
$$\left(\begin{array}{c}b\vec r\\ A\end{array}\right)$$
is IPR.
\item[(f)] Whenever $C$ is a central subset of $\ben$, and $m\in\ben$,
$\{\vec x\in\ben^v:A\vec x\in C^u$ {\rm , all entries of $\vec x$ are distinct and
at least $m$ and entries of $A\vec x$ corresponding to distinct rows of 
$A$ are distinct}$\}$ is
central in $\ben^v$.
\end{itemize}\end{theorem}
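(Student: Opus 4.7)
The plan is to establish the equivalences through a cycle of implications centered on (a), with (a) $\Rightarrow$ (b) being the substantive algebraic step and (a) $\Rightarrow$ (d) the central-set step. First I'd dispatch the easy parts: (c) $\Rightarrow$ (b) is trivial since (c) is just (b) restricted to integer matrices with a single first entry; (f) $\Rightarrow$ (d) is likewise trivial; and (b) $\Rightarrow$ (c) is done by clearing denominators (scale each column of $B$ by the LCM of the denominators in that column) and then scaling the columns again so that all distinct first entries become their common LCM $c$. In each case the substitution $\vec y \mapsto D\vec y$ for a diagonal rational matrix $D$ gives the required matrix $E$, and the $\vec x$ associated to $A\vec x=E\vec y$ comes from composing with the original substitution.

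For (c) $\Rightarrow$ (a) I'd invoke the classical fact, essentially Deuber's theorem, that every first entries matrix is IPR. This is proved by induction on the number of columns of $E$, using van der Waerden's theorem at the step where one handles the first column (find a monochromatic AP of appropriate length and common difference compatible with the first entry $c$), and then applying the inductive hypothesis to the submatrix on the remaining columns inside the colour class of the AP. Given any finite colouring of $\ben$, choose $\vec y\in\ben^m$ making $E\vec y$ monochromatic; the $\vec x$ guaranteed by (c) then makes $A\vec x=E\vec y$ monochromatic as well.

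The main obstacle is (a) $\Rightarrow$ (b). I would follow the Hindman--Leader strategy: induct on the number of columns $v$ of $A$. Zero columns are deleted harmlessly. Otherwise, using that $A$ is IPR, one identifies a positive rational combination of the columns of $A$ that can serve as a common first-entry column of a substitute matrix. Concretely, one produces a positive rational matrix $T$ such that $AT$ has rows whose minimum-support-index column carries a common positive entry; this requires extracting a Rado-style linear relation among the columns of $A$ that is forced by the image partition regularity hypothesis (typically via compactness applied to a suitable iterated colouring). The inductive hypothesis is then applied to the rows whose support avoids this column. The delicacy is in simultaneously arranging positivity of all entries of $T$ and coherence of the first-entry structure across all rows.

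Finally, the algebraic characterisations. For (a) $\Rightarrow$ (d) (and (f)) I'd let $C$ be central, witnessed by a minimal idempotent $p\in (\beta\ben,+)$, and use the first entries form from (b): inductively construct $\vec y$ so that each partial image lies in $C\cap p\ben^*$, exploiting that first entries matrices have image sets membered by every minimal idempotent. The strengthening to distinct, large entries in (f) is routine because at each inductive stage one may restrict to any cofinite translate of a $p$-large set. For (d) $\Rightarrow$ (a) one uses that in any finite partition of $\ben$ some cell contains a member of every minimal idempotent and is therefore central, so a solution in that cell is monochromatic. For (a) $\Leftrightarrow$ (e), the reverse direction is immediate since $A$ is a submatrix of the augmented matrix; for the forward direction, given $\vec r$, scale to make $b\vec r$ have integer entries whose positive-part combination fits as a new row in a first entries presentation of $A$, then apply (d) to locate $\vec x$ with $A\vec x$ and $b\vec r\cdot\vec x$ all simultaneously in a central set.
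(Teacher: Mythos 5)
The first thing to say is that the paper does not prove this theorem: its ``proof'' is a citation of statements (a), (c), (f), (h), (j) and (m) of Theorem 15.24 of Hindman and Strauss, plus the single observation that the ``all entries of $\vec x$ at least $m$'' clause in (f) comes for free because $\{\vec x\in\ben^v:\hbox{all entries of }\vec x\hbox{ are at least }m\}$ is an ideal of $\ben^v$ and hence belongs to every minimal idempotent of $(\ben^v,+)$. So any self-contained argument is necessarily a different route from the paper's; your skeleton (easy implications among (b), (c), (d), (f); Deuber's theorem for (c)$\Rightarrow$(a); the Hindman--Leader reduction for (a)$\Rightarrow$(b)) does match the architecture of the cited source.

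However, several steps as written have genuine gaps. For (b)$\Rightarrow$(c), a diagonal substitution $\vec y\mapsto D\vec y$ with positive scalars cannot change the sign of any entry, and a first entries matrix over $\beq$ may have negative entries away from the first entries (the $(2,2,1)$-matrix displayed in Section 2 is such an example), so column scaling alone cannot produce entries in $\omega$; one needs the standard triangular substitution that adds large positive multiples of the first-entry variable of each block to the later variables. The implication (a)$\Rightarrow$(b) is the entire content of the theorem, and ``extracting a Rado-style linear relation forced by the IPR hypothesis via compactness'' names the difficulty rather than resolving it; as written there is nothing one could check. Your route to (a)$\Rightarrow$(e) is wrong in a way the paper itself flags: for $A=\left(\begin{array}{cc}1&1\\ 1&2\end{array}\right)$ and $\vec r=\left(\begin{array}{cc}2&1\end{array}\right)$ the only admissible $b$ is $\frac{1}{2}$, so $b\vec r=\left(\begin{array}{cc}1&\frac{1}{2}\end{array}\right)$; the scalar $b$ is forced by expressing $\vec r$ through the first-entries presentation of $A$, not by clearing denominators, and the clean derivation goes through (f) together with the fact that $\vec x\mapsto\vec r\cdot\vec x$ is a homomorphism on $\ben^v$. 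Finally, in (f) the claim that the displayed set is central \emph{in $\ben^v$} is a statement about minimal idempotents of the product semigroup $(\ben^v,+)$, not about coordinatewise largeness, and calling the strengthening ``routine'' conceals essentially all of the work except for the one point (the ``at least $m$'' clause) that the paper does spell out.
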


\begin{proof} These are respectively statements (a), (c), (f), (h), (j), and (m) of
\cite[Theorem 15.24]{HS} except that (m) lacks the assertion that all entries of
$\vec x$ are at least $m$.  This follows because $\{\vec x\in\ben^v:$ all entries
of $\vec x$ are at least $m\}$ is an ideal of $\ben^v$ and is therefore a member
of every minimal idempotent.
\end{proof} 

Note that as a consequence of Theorem \ref{charipr}(b), first entries matrices are
IPR over $\ben$.

  As used in \cite{D}, given $m$, $p$, and $c$ in $\ben$, Deuber's 
$(m,p,c)$-set is an image of a first entries matrix with $m$ columns, all first entries
equal to $c$, all other entries from $\{-p, -p+1,\ldots,p-1,p\}$, and all 
possible rows fitting this description.  For example, a $(2,2,1)$-set is an image
of the matrix
$$\left(\begin{array}{cc}1&-2\\ 1&-1\\ 1&0\\ 1&1\\ 1&2\\ 0&1\end{array}\right)\,.$$

Most of the matrices with which we will deal will in fact have integer entries.
However some of the results about finite matrices demand that non integer entries
be allowed.  For example, if $A=\left(\begin{array}{cc}1&1\\ 1&2\end{array}\right)$
and $\vec r=\left(\begin{array}{cc} 2&1\end{array}\right)$, then $A$ is IPR
and the only $b$ such that $\left(\begin{array}{c}b\vec r\\ A\end{array}\right)$
is IPR is $b=\frac{1}{2}$, so that $b\vec r=
\left(\begin{array}{cc} 1&\frac{1}{2}\end{array}\right)$. Thus we could not include
statement (e) of Theorem \ref{charipr} if we restricted to integer entries.
(To verify that $b=\frac{1}{2}$, the reader can use \cite[Theorem 15.24(b)]{HS}.)

We conclude this section with a brief introduction to the algebraic
structure of $\beta\bez$, both under addition and multiplication.  (This
structure will be used in some proofs in the next section.) For 
proofs of the assertions made here, see \cite[Chapter 4]{HS}.

If $(S,\cdot)$ is a discrete semigroup, we take the Stone-\v Cech 
compactification $\beta S$ of $S$ to be the set of ultrafilters
on $S$, identifying the principle ultrafilters with the points
of $S$ and thereby pretending that $S\subseteq\beta S$.  (Similarly,
for example, we identify an ultrafilter $p$ on $\ben$ with the
ultrafilter $\{A\subseteq\bez:A\cap\ben\in p\}$ on $\bez$ and pretend that
$\beta\ben\subseteq\beta\bez$.) We write $S^*=\beta S\setminus S$.  So
$S^*$ is the set of nonprincipal ultrafilters on $S$.

Given a set $A\subseteq S$, $\overline A=\{p\in\beta S:A\in p\}$,
$\{\overline A:A\subseteq S\}$ is a basis for the topology on $\beta S$,
and each $\overline A$ is clopen in $\beta S$.  The operation
on $S$ is extended to $\beta S$ so that for each $p\in\beta S$
the function $q\mapsto q\cdot p$ is continuous and for each
$x\in S$ the function $q\mapsto x\cdot q$ is continuous.  Given
$p,q\in\beta S$ and $A\subseteq S$, $A\in p\cdot q$ if and only if
$\{x\in S:x^{-1}A\in q\}\in p$, where $x^{-1}A=\{y\in S:xy\in A\}$.
If the operation is denoted by $+$, one has that
$A\in p+ q$ if and only if
$\{x\in S:-x+A\in q\}\in p$, where $-x+A=\{y\in S:x+y\in A\}$.

As with any compact Hausdorff right topological semigroup, $\beta S$
has idempotents and a smallest two-sided ideal $K(\beta S)$. Idempotents
in the smallest ideal are called {\it minimal\/}.  Given an
idempotent $p\in \beta S$, $p$ is minimal if and only if
$p\beta S p$ is a group. (We shall be using this in the context
of $(\beta\ben,+)$ so that if $p$ is minimal, then 
$p+\beta\ben+p$ is a group.)

\begin{definition}\label{defcentral} Let $A\subseteq \ben$.  Then 
$A$ is {\it central\/} if and only if there is some minimal
idempotent $p$ in $(\beta\ben,+)$ such that $A\in p$.\end{definition}

\begin{definition}\label{defcenipr} Let $u,v\in\ben\cup\{\omega\}$ and let
$A$ be a $u\times v$ matrix with entries from $\beq$.
\begin{itemize}
\item[(a)] $A$ is {\it centrally IPR\/} if and
only if whenever $C$ is a central set in $\ben$, there exists
$\vec x\in \ben^v$ such that $A\vec x\in C^u$.
\item[(b)] $A$ is {\it strongly centrally IPR\/}
if and only if whenever $C$ is a central set in $\ben$, there exists
$\vec x\in \ben^v$ such that $A\vec x\in C^u$, the entries of
$\vec x$ are distinct, and entries of $A\vec x$ corresponding to distinct
rows of $A$ are distinct.\end{itemize}\end{definition}

Notice that by Theorem \ref{charipr}(f), any finite IPR
matrix is strongly centrally IPR.

\begin{theorem}\label{DHScentral} The 
matrices ${\bf D}$ and ${\bf F}$ are strongly centrally 
IPR.\end{theorem}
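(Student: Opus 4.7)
The plan is a two-layer Galvin--Glazer construction. Fix a minimal idempotent $p$ in $(\beta\ben,+)$ with $C\in p$, and set $C^{\ast}=\{x\in C:-x+C\in p\}$. Standard algebra gives $C^{\ast}\in p$, and, for every $s\in C^{\ast}$, one has $-s+C^{\ast}\in p$. In particular $C^{\ast}$ is itself central.

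For ${\bf F}$ I would recursively choose $x_0,x_1,\ldots$ so that every partial $FS$-sum lies in $C^{\ast}$ and $x_{n+1}>2\sum_{i\leq n}x_i$. At stage $n+1$, pick
$$x_{n+1}\in C^{\ast}\,\cap\,\bigcap_{\emp\neq F\subseteq\nhat{n}}\Bigl(-\textstyle\sum_{i\in F}x_i+C^{\ast}\Bigr)\,\cap\,\{y\in\ben:y>2\textstyle\sum_{i\leq n}x_i\}\,.$$
The first two factors lie in $p$ by the idempotent property, and the third is cofinite, so the intersection is in $p$ and hence nonempty. The super-doubling growth makes finite-sum representations unique, so distinct rows of ${\bf F}$ (which correspond to distinct nonempty finite subsets of $\omega$) produce distinct entries of ${\bf F}\vec x$; in particular the $x_n$ themselves are distinct.

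For ${\bf D}$ I would build blocks $\vec y_0,\vec y_1,\ldots$ with $Y_n=Im(B_n\vec y_n)\subseteq C^{\ast}$ together with rapid growth between blocks. Having built $\vec y_0,\ldots,\vec y_{n-1}$, set
$$C_n=C^{\ast}\,\cap\,\bigcap_{s\in FS(\langle Y_i\rangle_{i=0}^{n-1})}(-s+C^{\ast})\,\cap\,\{y\in\ben:y>2\textstyle\sum_{i<n}\max Y_i\}\,.$$
This lies in $p$, so it is central. Now apply Theorem \ref{charipr}(f) to the finite IPR matrix $B_n$ with $C_n$ in place of $C$, taking the parameter $m$ large enough that the entries of $\vec y_n$ exceed those of the preceding $\vec y_i$. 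This yields a $\vec y_n$ with $B_n\vec y_n\in C_n^{u(n)}$, with distinct entries, and with distinct rows of $B_n$ producing distinct entries of $B_n\vec y_n$. Concatenate the $\vec y_n$'s to form $\vec x$. Each entry of ${\bf D}\vec x$ then equals some $\sum_{i\in F}z_i$ with $z_i\in Y_i\subseteq C^{\ast}$, which lies in $C$ by the choice of $C_n$. For the strong distinctness clause, note that any row of ${\bf D}$ is coded by a pair $(F,(\sigma_i)_{i\in F})$ where $F\in\pf(\omega)$ and $\sigma_i$ is a row of $B_i$; the block-growth condition makes $(F,(z_i)_{i\in F})$ uniquely recoverable from the sum $\sum_{i\in F}z_i$, and the per-block distinctness from Theorem \ref{charipr}(f) then pins down each $\sigma_i$.

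The main obstacle is coordinating the algebraic requirement (all partial finite sums must remain in the idempotent $p$) with the combinatorial requirement (distinct rows must give distinct entries of the image). The algebra is managed by the usual iteration against $C^{\ast}$, and the combinatorial side is reduced to cofinite growth conditions that can be absorbed into $p$; but the key input one really needs from the finite theory is not bare image partition regularity of $B_n$ but the stronger centrality form \ref{charipr}(f), which is what supplies the internal distinctness within each block and thereby lets the rapid-growth bookkeeping close the argument.
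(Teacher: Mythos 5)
Your construction is essentially the paper's own proof: a Galvin--Glazer iteration against $C^\star$ in which each block $\vec y_n$ is produced by applying Theorem \ref{charipr}(f) to $B_n$ and a suitable member of $p$, with the growth requirements absorbed into $p$ as cofinite sets (the paper runs only the ${\bf D}$ argument and derives ${\bf F}$ as a special case, but that difference is cosmetic, and your separate treatment of ${\bf F}$ is correct). The one point where your justification overclaims is the final distinctness step for ${\bf D}$: the condition $\min Y_n>2\sum_{i<n}\max Y_i$ does \emph{not} make $(F,(z_i)_{i\in F})$ recoverable from $\sum_{i\in F}z_i$ once the $Y_i$ are not singletons --- for instance $Y_0=\{1,2\}$, $Y_1=\{10,11\}$ satisfies the growth condition yet $2+10=1+11$. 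This is easily repaired (and is passed over in silence in the paper's proof as well): additionally require all entries of $B_n\vec y_n$ to be divisible by some $M_n$ exceeding $\sum_{i<n}\max Y_i$, which costs nothing since $M\ben$ lies in every idempotent of $(\beta\ben,+)$; then distinct elements of $Y_n$ differ by more than everything accumulated earlier and the recovery argument closes.
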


\begin{proof} We shall do the proof for ${\bf D}$.  The 
proof for ${\bf F}$ is similar and simpler.  In fact the
result for ${\bf F}$ is a corollary of the result for ${\bf D}$
as can be seen by restricting to those $n<\omega$ for which
$B_n$ consists of the first $v$ columns and first $2^v-1$ rows
of ${\bf F}$ for some $v$.

The proof is a modification of \cite[Theorem 16.16]{HS}.  (This is essentially
the result of the theorem of \cite{DH}, which was restricted
to $(m,p,c)$-sets.)  

Let $\langle B_n\rangle_{n=0}^\infty$, 
$\langle u(n)\rangle_{n=0}^\infty$, and $\langle v(n)\rangle_{n=0}^\infty$,
be as in the construction of ${\bf D}$.

Let $C$ be central in $\ben$ and pick a minimal idempotent $p$ in $(\beta\ben,+)$ such that 
$C\in p$.  Let $C^\star=\{x\in C:-x+C\in p\}$ and note that by 
\cite[Lemma 4.14]{HS}, if $x\in C^\star$, then $-x+C^\star\in p$.
Pick by Theorem \ref{charipr}(f) some $\vec x(0)
\in\ben^{v(0)}$ such that all entries of $B_0\vec x(0)$ are in
$C^\star$, the entries of $\vec x(0)$ are  distinct, and
entries of $B_0\vec x(0)$ corresponding to distinct rows
of $B_0$ are distinct. Let $Y_0$ be the set of entries of
$B_0\vec x(0)$.

Inductively, let $n\in\omega$\ and assume that we have  chosen
$\vec x(k)\in\ben^{v(k)}$ for each $k\in\ohat{n}$ so that,
with $Y_k$ as the set of entries of $B_k\vec x(k)$, one has
\begin{itemize}
\item[(1)] $FS(\langle Y_k\rangle_{k=0}^n)\subseteq C^\star$;
\item[(2)] the entries of $\vec x(k)$ are distinct;
\item[(3)] entries of $B_k\vec x(k)$ corresponding to distinct
rows of $B_k$ are distinct; and 
\item[(4)] if $k<n$, then 
$$\begin{array}{l}\max\big(\big\{x(k)_i:i\in\ohat{v(k)-1}\big\}\cup Y(k)\big)<{}\\
\min\big(\big\{x(k+1)_i:i\in\ohat{v(k+1)-1}\big\}\cup Y(k+1)\big)\,.\end{array}$$
\end{itemize}

Let $m=\max\big(\big\{x(n)_i:i\in\ohat{v(n)-1}\big\}\cup Y(n)\big)$
and let 
$$A=\{x\in\ben:x>m\}\cap C^\star\cap\textstyle\bigcap\{-a+C^*:a\in 
FS(\langle Y_k\rangle_{k=1}^n)\}\,.$$
Then $A\in p$ so pick by Theorem \ref{charipr}(f) some $\vec x\in\ben^{v(n+1)}$
such that all entries of $B_{n+1}\vec x(n+1)$ are in
$A$, the entries of $\vec x(n+1)$ are  distinct and all at least $m+1$, and
entries of $B_{n+1}\vec x(n+1)$ corresponding to distinct rows
of $B_{n+1}$ are distinct. Let $Y_{n+1}$ be the set of entries of
$B_{n+1}\vec x(n+1)$.
Then $FS(\langle Y_k\rangle_{k=0}^{n+1})\subseteq C^\star$.
\end{proof}

\section{Extending the Finite Sums matrix}

We are concerned in this section with the general question, given 
an IPR matrix $A$, which matrices $B$ can be added
so that $\left(\begin{array}{c}B\\ A\end{array}\right)$ is IPR.  
We saw in Theorem \ref{charipr}(e) that if $A$ is finite, it
can be extended one row at a time practically at will.

By way of contrast, there exist finite {\it kernel partition regular\/}
matrices which cannot be extended at all.  (A $u\times v$ matrix 
$A$ is kernel partition regular if and only if whenever $\ben$ is finitely
coloured, there exists $\vec x\in\ben^v$ whose entries are monochromatic
such that $A\vec x=\vec 0$.)  Consider the matrix $A=\left(\begin{array}{ccc}
1&1&-1\end{array}\right)$.  The assertion that $A$ is kernel partition regular
is Schur's Theorem. The only way $A$ can be extended is by essentially
repeating the same equation. That is, if $\left(\begin{array}{ccc}u&v&w\\ 1&1&-1\end{array}\right)$
is kernel partition regular, then $u=v=-w$.  (This can be seen by invoking
Rado's Theorem \cite[Satz IV]{R}, or by noting that, if $u\neq v$ and 
$\alpha=-\frac{u+w}{v+w}$, then $\alpha>0$ and $\alpha\neq 1$.  Then
colour $\ben$ in two colours so that for any $x\in \ben$, if $\alpha x\in\ben$,
it has a different colour.)

\begin{definition}\label{maxipr} A matrix $A$ is {\it maximal IPR\/} provided it
is IPR and if $\vec r$ is a row with finitely many nonzero entries which is
not a row of $A$, then $\left(\begin{array}{c}\vec r\\ A\end{array}\right)$
is not IPR.\end{definition}

We give a trivial example of a maximal finite sums matrix in the following proposition.

\begin{proposition}\label{maximal} Let $c$ be a positive rational number, and let $A$ denote an
$\omega\times\omega$ matrix over $\beq$ which contains all possible rows whose entries have a 
sum equal to $c$. Then $A$ is maximal IPR. \end{proposition}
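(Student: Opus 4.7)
The plan is to verify two things in succession. First, that $A$ is IPR: writing $c=p/q$ in lowest terms with $p,q\in\ben$ and choosing $\vec x=(q,q,q,\ldots)$, every row of $A$ contributes $qc=p$, so $A\vec x$ is the constant vector with entry $p\in\ben$, which is trivially monochromatic under any finite colouring. Second, and this is the substantive content, that adjoining any new row destroys image partition regularity.

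For maximality, let $\vec r$ be a row with finitely many nonzero entries that is not a row of $A$, so $s:=\sum_i r_i\ne c$, and let $B$ be the matrix obtained by adjoining $\vec r$ to $A$. The crucial step is to show that every $\vec x\in\ben^\omega$ with $A\vec x\in\ben^\omega$ must be constant. Fixing distinct indices $i,j$ and varying $a\in\beq$, the row having $a$ in position $i$, $c-a$ in position $j$ and zeros elsewhere sums to $c$ and hence lies in $A$, so $ax_i+(c-a)x_j=a(x_i-x_j)+cx_j\in\ben$ for every rational $a$; if $x_i\ne x_j$ this linear function of $a$ would take arbitrarily negative values, a contradiction. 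Thus $\vec x=(k,k,\ldots)$; the requirement $ck\in\ben$ then forces $q\mid k$, say $k=qm$, so every entry of $A\vec x$ equals $pm$. The extra coordinate produced by $\vec r$ is $\vec r\cdot\vec x=sqm$.

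If $s\le 0$ then $sqm\notin\ben$, so no valid $\vec x$ exists at all and $B$ fails to be IPR trivially. If $s>0$, set $\alpha:=s/c\in\beq_+\setminus\{1\}$; the values appearing in $B\vec x$ are then $pm$ and $\alpha\cdot pm$, two distinct positive rational multiples of $m$. It remains to exhibit a finite colouring separating $n$ from $\alpha n$ for every $n\in\ben$ with $\alpha n\in\ben$. Setting $\beta=\max(\alpha,\alpha^{-1})>1$, the two-colouring $\chi(n)=\lfloor\log_\beta n\rfloor\bmod 2$ satisfies $\chi(\beta n)=1-\chi(n)$ and hence $\chi(\alpha n)\ne\chi(n)$, so $B\vec x$ is never monochromatic and $B$ is not IPR. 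The principal subtlety is the forced-constancy claim: it is the only place the full richness of $A$ (rows with arbitrary rational weights summing to $c$) is invoked, and once it is in hand the split into $s\le 0$ and $s>0$ together with the two-colour separation are routine.
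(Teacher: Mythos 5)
Your proof is correct and its skeleton matches the paper's: the substantive step in both is the forced-constancy claim, established by the same device of a two-nonzero-entry row summing to $c$ (the paper takes the entries to be $c+r$ and $-r$ for $r$ large, you let the entry $a$ range over all of $\beq$; these are the same argument). Where you genuinely diverge is in the separating colouring. The paper colours all of $\beq^+$ by the exponent of a suitably chosen prime $p$ reduced modulo a prime $q$, so that $bs$ and $cs$ are always separated; you instead two-colour $\ben$ by the parity of $\lfloor\log_\beta n\rfloor$ with $\beta=\max(\alpha,\alpha^{-1})$, which is the same trick the paper itself invokes earlier in Section 3 for extensions of the Schur equation, and which has the small advantage of using only two colours. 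Both colourings are routine and valid. You are also somewhat more explicit than the paper at the edges: you verify that $A$ is IPR at all (via the constant vector $(q,q,\ldots)$, needed since maximal IPR presupposes IPR) and you dispose of the case $s\le 0$ separately; the paper leaves both points implicit.
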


\begin{proof} We first observe that if $\vec x\in\ben^{\omega}$ and $A\vec x\in\ben^{\omega}$, 
then $\vec x$ has constant entries. To see this, let $m$ and $n$ be distinct elements of $\omega$
and pick $r\in\ben$ such that $r>cx_m$.
The vector in $\beq^{\omega}$ 
whose $m$'th entry is $c+r$ and whose $n$'th entry is $-r$, with all  other 
entries being 0, is a row of $A$. So $(c+r)x_m>rx_n$ and hence $x_m\geq x_n$. 
By symmetry, $x_n\geq x_m$ and so $x_m=x_n$.

Now suppose that the sum of the entries of $\vec r$ is $b\neq c$. We can define a 
finite colouring of $\beq^+$ such that, for every $s\in\beq^+$, $bs$ and $cs$ have 
different colours. 
It follows that $\left(\begin{array}{c}\vec r\\ A\end{array}\right)$ cannot be IPR over $\ben$. 
For example, observe that every element of $\beq^+$ has a unique decomposition 
of the form $\prod_{i\in \ben}\,p_i^{k_i}$ where $(p_i)_{i\in\ben}$ denotes the 
sequence of prime numbers and each $k_i\in\bez$. We can choose a prime $p$ which 
occurs with different exponents $i$ and $j$ in the decomposition of $b$ and $c$ respectively. We can choose a prime
$q>\max(|i|,|j|)$ and colour each $s\in \beq^+$ by the value (mod $q$) of the exponent of $p$ in 
the prime decomposition of $s$. \end{proof}

\begin{conjecture} There are no maximal IPR matrices other than those given by
Proposition {\rm \ref{maximal}}.
\end{conjecture}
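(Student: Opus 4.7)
I would decompose the conjecture into two sub-statements about a maximal IPR matrix $A$: (i) there is a single positive rational $c$ such that every row of $A$ has entries summing to $c$, and (ii) $A$ contains every row $\vec r\in\beq^\omega$ with finite support satisfying $\sum \vec r=c$. Taken together these say $A$ is the matrix of Proposition \ref{maximal}. I would deal with (ii) first, since it is essentially formal given (i): if some such $\vec r$ were missing from $A$, then $\left(\begin{array}{c}\vec r\\ A\end{array}\right)$ would be a row-submatrix of the matrix $A_c$ of Proposition \ref{maximal}, and row-submatrices of IPR matrices are IPR (the same $\vec x$ witnesses both). This would contradict maximality of $A$.

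The whole substance therefore lies in (i), which I would attack by contradiction: suppose $A$ is maximal IPR and that two of its rows $\vec r_1,\vec r_2$ have distinct entry-sums. The aim is to produce a row $\vec t$ not appearing in $A$ such that $\left(\begin{array}{c}\vec t\\ A\end{array}\right)$ is still IPR. A natural family of candidates is rows with very restricted support — say two nonzero entries supported in columns beyond those used by a large finite $A'\subseteq A$ — so that $\vec t$ constrains only one or two fresh variables while leaving the rows of $A'$ free to be realised. Theorem \ref{charipr}(e) applied to finite truncations supplies, for every finite $A'\subseteq A$ and every such $\vec t$, a nonzero rational scalar $\lambda_{A'}$ making $\left(\begin{array}{c}\lambda_{A'}\vec t\\ A'\end{array}\right)$ IPR; the desired contradiction would then follow from packaging these finite extensions into an IPR extension of all of $A$.

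The principal obstacle is that the scalars $\lambda_{A'}$ need not cohere as $A'$ grows, and finite partition regularity does not automatically pass to the $\omega\times\omega$ limit. I would try to circumvent this through the Stone-\v Cech framework of Section 2: if one could show that a maximal IPR matrix is automatically centrally IPR in the sense of Definition \ref{defcenipr} — the analogue for arbitrary $A$ of Theorem \ref{DHScentral} for $\bf F$ and $\bf D$ — then a minimal idempotent $p\in(\beta\ben,+)$ would furnish a single $\vec x$ realising $A\vec x$ inside any prescribed central set $C$, and the flexibility of the group $p+\beta\ben+p$ together with the interaction between $(\beta\ben,+)$ and $(\beta\ben,\cdot)$ ought to permit a simultaneous realisation of $\vec t\cdot\vec x$ inside $C$ as well. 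The step I expect to be hardest is establishing this central property for an abstract maximal IPR $A$ without further structural data; if that cannot be carried out, a completely different strategy — perhaps a colouring-based argument which directly shows that any extension of a non-constant-row-sum IPR matrix admits a finite colouring killing it, in the spirit of the prime-exponent colouring used at the end of the proof of Proposition \ref{maximal} — would be called for.
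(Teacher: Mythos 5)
The statement you have been asked about is not a theorem of the paper but an open conjecture: the authors give no proof, and indeed the surrounding sections make clear they regard it as unresolved. So there is no argument in the paper to compare yours against, and your proposal should be judged on its own terms --- on which terms it is a plan of attack rather than a proof. Your step (ii) is fine and essentially formal, as you say: a row-submatrix of an IPR matrix is IPR (the same witness $\vec x$ works), so once all rows of $A$ sum to $c$, any missing row of sum $c$ could be adjoined, contradicting maximality. Everything therefore rests on (i), and there the gaps are genuine and, in light of the paper's own partial results, serious.

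Concretely: (a) Theorem \ref{charipr}(e) only produces, for each finite truncation $A'$ and candidate row $\vec t$, some scalar $\lambda_{A'}$ with $\left(\begin{array}{c}\lambda_{A'}\vec t\\ A'\end{array}\right)$ IPR, and you correctly identify that these need not cohere; but nothing in your proposal supplies a mechanism for making them cohere, and the paper's Section 3 shows this finite-to-infinite passage is exactly where the difficulty lives --- even for the single explicit matrix ${\bf F}$, Theorem \ref{extendingF} only gives necessary conditions on which rows can be adjoined, and Question \ref{onetwoone} records that the authors cannot decide a specific instance. Note also that a row $\vec t$ supported in columns beyond those met by a finite $A'$ is not supported in columns fresh to all of $A$, so the infinitely many remaining rows of $A$ still constrain the same variables; this is precisely what kills such candidates for the matrix of Proposition \ref{maximal}, and you give no reason it fails to kill them for a general non-constant-row-sum $A$. (b) The hoped-for lemma that a maximal IPR matrix is automatically centrally IPR (Definition \ref{defcenipr}) has no visible support: IPR does not imply centrally IPR in general (Milliken--Taylor matrices with $k\geq 1$ are IPR but, by Theorem \ref{MTsep}, not centrally IPR), and the definition of maximality provides no idempotent witness. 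Even granting it, the claim that the group $p+\beta\ben+p$ lets you realise $\vec t\cdot\vec x$ in the same central set is an assertion of exactly the kind the paper proves only in very special cases (Theorem \ref{DHScentral}, Section 6). In short, your decomposition is a reasonable way to organise the problem, but neither route to (i) is carried out, and the conjecture remains open.
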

 
The reason for the title of the section is that the only results we have
on the general question deal with extending the Finite Sums matrix. 
(Recall that we are denoting the Finite Sums matrix by ${\bf F}$.)
Thus, we are addressing the question of
which matrices $B$ (of dimension $u\times\omega$ for some $u\in\ben\cup\{\omega\}$)
have the property that $\left(\begin{array}{c}B\\ {\bf F}\end{array}\right)$
is IPR.  In the case that $u$ is finite, we can answer that
question completely.  (Recall that we are assuming that all the matrices which we consider
have finitely many nonzero entries in each row, so that if $u$ is finite,
then $B=\left(\begin{array}{cc}A&{\bf O}\end{array}\right)$ where $A$ is some finite
matrix with $u$ rows and ${\bf O}$
is the $u\times \omega$ matrix with all zeroes.)

\begin{theorem}\label{addfinite} Let $u,v\in\ben$, let $A$ be a $u\times v$ matrix
with rational entries, let ${\bf F}_v$ consist of the first $v$ columns
and the first $2^v-1$ rows of ${\bf F}$, and let ${\bf O}$ be the 
$u\times \omega$ matrix with all zeroes.  The following statements are equivalent.
\begin{itemize}
\item[(a)] $\left(\hskip -8 pt\begin{array}{c}\begin{array}{cc}A&{\bf O}\end{array}\\
{\bf F}\end{array}\hskip -8 pt\right)$ is strongly centrally IPR.
\item[(b)] $\left(\hskip -8 pt\begin{array}{c}\begin{array}{cc}A&{\bf O}\end{array}\\
{\bf F}\end{array}\hskip -8 pt\right)$ is centrally IPR.
\item[(c)] $\left(\hskip -8 pt\begin{array}{c}\begin{array}{cc}A&{\bf O}\end{array}\\
{\bf F}\end{array}\hskip -8 pt\right)$ is IPR.
\item[(d)] $\left(\begin{array}{c}A\\ {\bf F}_v\end{array}\right)$ is IPR.
\end{itemize}\end{theorem}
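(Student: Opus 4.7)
The plan is to prove the chain (a)$\Rightarrow$(b)$\Rightarrow$(c)$\Rightarrow$(d)$\Rightarrow$(a). The first two arrows are immediate from the definitions: strongly centrally IPR trivially implies centrally IPR, and centrally IPR implies IPR because in any finite colouring of $\ben$ some colour class is central. For (c)$\Rightarrow$(d), given a finite colouring of $\ben$, I would apply (c) to obtain $\vec x\in\ben^\omega$ with $\left(\begin{array}{c}\begin{array}{cc}A&{\bf O}\end{array}\\ {\bf F}\end{array}\right)\vec x$ monochromatic and observe that the rows of ${\bf F}_v$ padded with zeros are precisely the rows of ${\bf F}$ whose support lies in $\{0,\ldots,v-1\}$; so the entries of $\left(\begin{array}{c}A\\ {\bf F}_v\end{array}\right)(x_0,\ldots,x_{v-1})^T$ form a sub-tuple of the monochromatic image, witnessing (d).

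The substantive step is (d)$\Rightarrow$(a). Let $C$ be central, fix a minimal idempotent $p\in(\beta\ben,+)$ with $C\in p$, and put $C^\star=\{x\in C:-x+C\in p\}$; then $C^\star\in p$ and $x\in C^\star$ implies $-x+C^\star\in p$, by \cite[Lemma 4.14]{HS}. Because (d) makes $\left(\begin{array}{c}A\\ {\bf F}_v\end{array}\right)$ a finite IPR matrix, Theorem \ref{charipr}(f) guarantees it is strongly centrally IPR; applied to $C^\star$ it supplies $(x_0,\ldots,x_{v-1})\in\ben^v$ with distinct, arbitrarily large entries such that every entry of $\left(\begin{array}{c}A\\ {\bf F}_v\end{array}\right)(x_0,\ldots,x_{v-1})^T$ lies in $C^\star$ and distinct rows give distinct image entries. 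I would then continue with the Galvin--Glazer construction used in the proof of Theorem \ref{DHScentral}: having chosen $x_0,\ldots,x_n$ (for $n\ge v-1$), let
$$E_n=C^\star\cap\bigcap\bigl\{-s+C^\star:s\in FS(\langle x_k\rangle_{k=0}^n)\bigr\}\cap\{x\in\ben:x>M_n\},$$
where $M_n$ majorises $\sum_{k\le n}x_k$ together with the absolute values of the entries of $A(x_0,\ldots,x_{v-1})^T$. Since each such $s$ lies in $C^\star$, $-s+C^\star\in p$, so $E_n\in p$ is non-empty; pick $x_{n+1}\in E_n$. A routine check shows $FS(\langle x_n\rangle_{n=0}^\infty)\subseteq C^\star\subseteq C$, so every entry of $\left(\begin{array}{c}\begin{array}{cc}A&{\bf O}\end{array}\\ {\bf F}\end{array}\right)\vec x$ lies in $C$.

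The principal technical obstacle is the row-distinctness clause required for strong centrality, which I would dispose of in three cases. Distinctness within the initial block --- among rows of $A$ and among rows of ${\bf F}$ whose support lies in $\{0,\ldots,v-1\}$, and between these two classes --- is inherited directly from the strongly centrally IPR property of $\left(\begin{array}{c}A\\ {\bf F}_v\end{array}\right)$. Two rows of ${\bf F}$ with different supports give distinct sums because the lacunarity $x_{n+1}>\sum_{k\le n}x_k$ forces all finite subset sums of $\langle x_n\rangle_{n=0}^\infty$ to be distinct (the standard Sidon-style argument). Finally, a row of ${\bf F}$ whose support meets $\{v,v+1,\ldots\}$ has image at least $x_v$, which by the choice of $M_{v-1}$ strictly exceeds every entry of $A(x_0,\ldots,x_{v-1})^T$, so such a row cannot match any row from the $A$-block. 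With these three verifications the circle closes and (a) is established.
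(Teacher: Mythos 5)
Your proof is correct and follows essentially the same route as the paper: the only nontrivial implication is (d)$\Rightarrow$(a), handled by applying Theorem \ref{charipr}(f) to the finite matrix $\left(\begin{array}{c}A\\ {\bf F}_v\end{array}\right)$ inside $C^\star$ and then extending by a finite-sums construction whose translated sets all lie in the idempotent $p$. The only cosmetic difference is that the paper invokes \cite[Theorem 5.14]{HS} to produce the tail sequence with disjoint binary supports (making the distinctness claims immediate), whereas you run the Galvin--Glazer induction by hand with a growth condition $x_{n+1}>M_n$ and check distinctness case by case; both work.
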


\begin{proof} The only nontrivial implication is that (d) implies (a), so
assume that $\left(\begin{array}{c}A\\ {\bf F}_v\end{array}\right)$ is IPR.  
Let $C$ be a central subset of $\ben$ and pick a minimal idempotent
$p\in\beta\ben$ such that $C\in p$.  Let $C^\star=\{x\in C:-x+C\in p\}$ and note
that, by \cite[Lemma 4.14]{HS}, if $x\in C^\star$, then $-x+C^\star\in p$. Then
$C^\star$ is central, so by Theorem \ref{charipr}(f),
pick $x_0,x_1,\ldots,x_{v-1}$, all distinct, such that
$$\left(\begin{array}{c}A\\ {\bf F}_v\end{array}\right)\left(\begin{array}{c}
x_0\\ x_1\\ \vdots\\ x_{v-1}\end{array}\right)\in (C^\star)^u$$ and entries corresponding to 
distinct rows of $\left(\begin{array}{c}A\\ {\bf F}_v\end{array}\right)$
are distinct.  Let $m$ be the maximum of all of these entries.
Let 
$$B=\{x\in\ben:x>m\}\cap\textstyle\bigcap\{-a+C^\star:a\hbox{ is an entry of }
\left(\begin{array}{c}A\\ {\bf F}_v\end{array}\right)\left(\begin{array}{c}
x_0\\ x_1\\ \vdots\\ x_{v-1}\end{array}\right)\}\,.$$
Then $B\in p$ so by \cite[Theorem 5.14]{HS}, pick a sequence
$\langle H_n\rangle_{n=0}^\infty$ in $\pf(\omega)$ such that
for every $n\in\omega$, $\max H_n<\min H_{n+1}$ and, 
if $y_n=\sum_{t\in H_n}\,2^t$, then $FS(\langle y_n\rangle_{n=0}^\infty)\subseteq B$.
By discarding a few terms, we may assume that $\min H_0\geq m$.
For $n\geq v$, let $x_n=y_n$.  Then all entries of
$\left(\hskip -8 pt\begin{array}{c}\begin{array}{cc}A&{\bf O}\end{array}\\
{\bf F}\end{array}\hskip -8 pt\right)\vec x$ are in $C$, entries of $\vec x$ are distinct,
and entries of  $\left(\hskip -8 pt\begin{array}{c}\begin{array}{cc}A&{\bf O}\end{array}\\
{\bf F}\end{array}\hskip -8 pt\right)\vec x$ corresponding to distinct
rows of $\left(\hskip -8 pt\begin{array}{c}\begin{array}{cc}A&{\bf O}\end{array}\\
{\bf F}\end{array}\hskip -8 pt\right)$ are distinct.
\end{proof}

The above proof in fact establishes something stronger than statement (a).  For
example, let
$A$ be an $\omega\times \omega$ matrix with all rows beginning with $1$ and then $2$
and followed by $0$'s and $1$'s with finitely many $1$'s.  The proof shows
that $\left(\begin{array}{c}A\\ {\bf F}\end{array}\right)$ is strongly
centrally IPR.

We do not know of any matrices that have entries not equal to either
$0$ or $1$ arbitrarily far to the right and extend the Finite Sums matrix.
We strongly suspect that the answer to the following question is ``no", but
cannot prove that it is.

\begin{question}\label{onetwoone} Let
$$B=\left(\begin{array}{cccccc}
1&2&1&0&0&\ldots\\
0&1&2&1&0&\ldots\\
0&0&1&2&1&\ldots\\
\vdots&\vdots&\vdots&\vdots&\vdots&\ddots\end{array}\right)\,.$$
Is $\left(\begin{array}{c}B\\ {\bf F}\end{array}\right)$ IPR?\end{question}

In the light of the following theorem, the matrix defined in Question \ref{onetwoone}
is the simplest possible matrix of this kind about which the question arises.
In this theorem we let ${\bf F'}$ be the submatrix of ${\bf F}$ consisting of
the rows with at most two $1$'s.

\begin{theorem}\label{extendingF} Let $k\in\ben\setminus\{1\}$ and let $a_0,a_1,a_2,\ldots,a_{k-1}\in \bez$,
with $a_0$ and $a_{k-1}$ being non-zero. Let $A$ denote the $\omega\times\omega$ matrix
whose $n$'th  row has entries $a_0,a_1,\ldots,a_{k-1}$ in the columns indexed by $n,n+1,n+2,\ldots,n+k-1$ 
respectively, with all other entries being zero. Assume that
$B=\left(\begin{array}{c}A \\ {\bf F'}\end{array}\right)$ is IPR over $\ben$. Then $a_0=a_{k-1}=1$. \end{theorem}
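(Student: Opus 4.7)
The plan is to use the IPR hypothesis, applied to carefully chosen finite colourings of $\ben$, to derive constraints on $a_0$ and $a_{k-1}$. If $B$ is IPR and $\chi$ is any finite colouring of $\ben$, then there is some $\vec x\in\ben^\omega$ whose $B$-image is monochromatic, so that the singletons $x_n$, the pairwise sums $x_n+x_m$ for $n\neq m$ (from ${\bf F}'$), and the row-sums $\sum_{i=0}^{k-1}a_ix_{n+i}$ (from $A$) all share a colour.

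The workhorse would be a $p$-adic valuation colouring $\chi_{p,N}(n)=v_p(n)\bmod N$, for a prime $p$ and integer $N>\max_i v_p(a_i)$. Monochromaticity of the ${\bf F}'$-rows forces all $v_p(x_n)$ into a single residue class modulo $N$, and since $v_p(x_n)=v_p(x_m)$ would force $v_p(x_n+x_m)\geq v_p(x_n)+1$ into a different class, the values $v_p(x_n)$ must be pairwise distinct. Because the $v_p(x_{n+i})$ for different $i$ then differ by multiples of $N$, which exceed every $v_p(a_i)$, the valuation of an $A$-row reduces to $v_p(a_{i^\star(n)})+v_p(x_{n+i^\star(n)})$, where $i^\star(n)\in\ohat{k-1}$ is the position in the window of the smallest $v_p(x_{n+i})$. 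Monochromaticity then demands $p\nmid a_{i^\star(n)}$ for every $n$. Since $\{v_p(x_n):n\in\omega\}$ is an infinite set of distinct non-negative integers, it has a global minimum at some index $n^\star$; at the window starting there, $i^\star(n^\star)=0$, so $p\nmid a_0$. Taking $p$ over all primes yields $|a_0|=1$.

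The analogous statement $|a_{k-1}|=1$ is where the main obstacle lies, and is where the argument becomes genuinely asymmetric. The adversary can choose $\vec x$ so that $v_p(x_n)$ is strictly increasing, making the window's minimum permanently at position $0$, so the preceding argument gives no direct information about $a_{k-1}$. To force the minimum into position $k-1$ for some $n$, I would combine the valuation colouring with an auxiliary colouring of logarithmic flavour, say $\chi'(n)=\lfloor\log_b n\rfloor\bmod M$, and arrange the parameters so that any monotone pattern in $v_p(x_n)$ that is also admissible for $\chi'$ must leave some window in which $v_p(x_{n+k-1})$ is uniquely the smallest, forcing $p\nmid a_{k-1}$.

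Finally, once $|a_0|=|a_{k-1}|=1$ is secured, the remaining cases $a_0=-1$ and $a_{k-1}=-1$ are ruled out as follows. A negative leading or trailing coefficient forces strict growth conditions on $\vec x$ (so that $A\vec x\in\ben^\omega$): for instance, if $a_0=-1$ then $\sum_{i\geq 1}a_ix_{n+i}>x_n$ uniformly in $n$. Combining these growth conditions with a suitable logarithmic-scale colouring of the ${\bf F}'$-rows yields a contradiction, leaving $a_0=a_{k-1}=1$ as the only possibility. The hard part throughout is the asymmetry of the $a_{k-1}$ argument; the bounded-below, unbounded-above nature of $v_p$ on $\ben$ means the argument cannot simply be mirrored from the $a_0$ case and requires the auxiliary coloring to break the adversary's monotone strategies.
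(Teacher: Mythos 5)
Your argument for $a_0$ is close in spirit to the paper's (which works with base-$p$ digits for a single large prime $p>\sum_i|a_i|$ and an infinite-descent on the positions of least significant digits), but it has a technical gap: monochromaticity under $v_p(\cdot)\bmod N$ does \emph{not} force the $v_p(x_n)$ to be pairwise distinct. For odd $p$, two numbers of equal valuation can have a sum of the same valuation (e.g.\ $v_3(1)=v_3(1)=v_3(2)=0$), and for $p=2$ the sum's valuation increases but by an amount that may be a multiple of $N$, so it can land in the same residue class. To get distinctness you must also colour by the least significant nonzero base-$p$ digit, as the paper does with $e_{m(x)}(x)$; and even then your argument only yields $p\nmid a_0$ for all $p$, i.e.\ $a_0=\pm1$, whereas the digit condition in the paper's colouring pins down $a_0=1$ directly. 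These points are repairable.

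The serious gap is the $a_{k-1}$ half, which you acknowledge but do not prove, and your proposed repair cannot work as stated: nothing stops the adversary from taking $v_p(x_n)$ strictly increasing in $n$, in which case the valuation-minimum sits at position $0$ of \emph{every} window, and no auxiliary colouring of $\ben$ can relocate it to position $k-1$. The paper's argument for $a_{k-1}$ is of a genuinely different kind: it works at the most significant (archimedean) end, colouring by the top two base-$p$ digits and by $M(x)\bmod 3$ (where $M(x)=\max\supp(x)$), deducing that the $M(x_n)$ are pairwise distinct and hence that distinct $x$'s in an image differ by a factor exceeding $p^2$, and then taking the \emph{first} $n\geq k-1$ at which $M(x_n)$ sets a new record. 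That choice forces $x_n$ to be the dominant term sitting in the \emph{last} position of the window ending at $n$, the rest of the row is then negligible ($|t|<x_n$), positivity of the row sum immediately rules out $a_{k-1}<0$, and a comparison of the top two digits of $a_{k-1}x_n+t$ with those of $x_n$ rules out $a_{k-1}\geq 2$. This ``first new record'' device, which is the correct counterpart at the upper end of your ``global minimum'' device at the lower end, is the key idea missing from your proposal; your closing paragraph about eliminating $a_0=-1$ and $a_{k-1}=-1$ is likewise only a hope, whereas in the paper the sign issues never arise ($a_0=1$ comes from the digit condition and $a_{k-1}>0$ from positivity of the dominant term).
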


\begin{proof} Let $p$ be a prime number satisfying $p>\sum_{i=0}^{k-1}\,|a_i|$. 
Every $x\in\ben$ can be expressed uniquely as $x=\sum_{n=0}^{\infty}e_n(x)p^n$, where each 
$e_n(x)\in\{0,1,2,\ldots,p-1\}$ and only finitely many are nonzero.
We let $\supp(x)=\{n\in\omega:e_n(x)\neq 0\}$, let $m(x)=\min\supp(x)$, and let
$M(x)=\max\supp(x)$.

We define a finite colouring $\psi$ of $\ben$, agreeing that $\psi(x)=\psi(y)$ if and only
if 
\begin{itemize}
\item[(1)] $e_{m(x)}(x)=e_{m(y)}(y)$,
\item[(2)] $e_{M(x)}(x)=e_{M(y)}(y)$,
\item[(3)] $e_{M(x)-1}(x)=e_{M(y)-1}(y)$, and
\item[(4)] $M(x)\equiv M(y)$ (mod $3$),
\end{itemize}
Let $\vec x\in\ben^{\omega}$ be a vector for which the entries of
$B\vec x$ are monochromatic. Let $a$, $b$, and $c$ be the fixed
values of $e_{m(x_n)}(x_n)$, $e_{M(x_n)}(x_n)$, and $e_{M(x_n)-1}(x_n)$ respectively, for $n\in\omega$.

Let $m$ and $n$ be distinct elements of $\omega$.  Then $m(x_m)\neq m(x_n)$ because $x_m$,
$x_n$, and $x_n+x_m$ are all entries of $B\vec x$ and 
$2a\not\equiv a$ (mod $p$). Now assume that $a_0\neq 1$. 
Choose $r_0\in\ohat{k-1}$ such that
$$m(x_{r_0})=\min\{m(x_0),m(x_1),\ldots, 
m(x_{k-1})\}\,.$$ 
Then $m(a_0x_0+a_1x_1+\ldots+a_{k-1}x_{k-1})=m(x_{r_0})$.
Consequently, $m(x_{r_0})<m(x_0)$ because 
$a\not\equiv a_0a$ (mod $p$). 
Similarly, if $r_1\in\{r_0,r_0+1,\ldots,r_0+k-1\}$ such that
$m(x_{r_1})=\min\{m(x_{r_0}),m(x_{r_0+1}),\ldots,m(x_{r_0+k-1})\}$,
then $m(x_{r_1}) 
<m(x_{r_0})$.
 Proceeding in this way, we can define an infinite
decreasing sequence in $\omega$, which is impossible. So $a_0=1$.

We now claim that $M(x_m)\neq M(x_n)$. If $M(x_m)=M(x_n)$,
  then $M(x_m)\leq M(x_m+x_n)\leq
 M(x_m)+1$. This implies that $M(x_m+x_n)=M(x_m)$ and hence that
$b<\frac{p}{2}$. So the most significant digit in the base $p$ expansion of 
$x_m+x_n$ is $2b$ or $2b+1$, and this cannot be equal to $b$, a contradiction.

We observe that, if $x_m<x_n$ and $M(x_n)=s$, then $M(x_m)\leq s-3$. So
$x_n\geq p^s$ and $x_m<p^{s-2}$, and hence $\frac{x_n}{x_m}>p^2$.

Assume that $a_{k-1}\neq 1$.  Pick the first $n\geq k-1$ such that 
$$M(x_n)>\max\{M(x_0),M(x_1),\ldots,M(x_{k-2})\}\,.$$ Then
$x_n=\max\{x_{n-k+1},x_{n-k+2},\ldots,x_n\}$.  
Let $t=\sum_{i=0}^{k-2}\,a_ix_{n-k+1+i}$.
Then $$\textstyle |t|<(\sum_{i=0}^{k-2}\,|a_i|)\frac{x_n}{p^2}<x_n\leq |a_{k-1}|x_n\,.$$
Since $a_{k-1}x_n+t>0$, we must have $a_{k-1}>0$ and hence $a_{k-1}\geq 2$.
Let $r=M(x_n)$. We have observed that, if $x_i<x_n$, then $x_i<p^{r-2}$.
So $|t|<p^{r-1}$. We have that $M(a_{k-1}x_n+t)=r$, because $p^{r-1}< p^r-p^{r-1}<a_{k-1}x_n+t
<p^{r+2}+p^{r-1}<p^{r+3}$. Therefore $a_{k-1}x_n+t=bp^r+cp^{r-1}+u$, where $0\leq u<p^{r-1}$. 
We also have $x_n=bp^r+cp^{r-1}+v$, where $0\leq v<p^{r-1}$.
So $p^r\leq x_n\leq a_{k-1}x_n-x_n=u-v-t<2p^{r-1}$, a contradiction.\end{proof}

As we saw in Theorem \ref{MTsep}, if $k\in\ben$, 
$\vec a=\langle a_0,a_1,\ldots,a_k\rangle$ and 
$M$ is an $MT(\vec a)$ matrix, then 
$\left(\begin{array}{cc} M&{\bf O}\\{\bf O}& {\bf F}\end{array}\right)$
is not IPR.  We mention that we shall see, 
in Section 6, that 
$\left(\begin{array}{cc} \overline 1&M\\ \overline 0& {\bf F}\end{array}\right)$
is partition regular, where $\overline 1$ and $\overline 0$ are the constant
length $\omega$ column vectors.  That is, given any finite colouring of $\ben$, there
must exist a sequence $\vec x=\langle x_n\rangle_{n=0}^\infty$ and $b\in\ben$
such that $FS(\vec x)\cup \big(b+MT(\vec a,\vec x)\big)$ is monochromatic.

\section{A maximal property of the Finite Sums matrix}

In this section we show that that the Finite Sums matrix, ${\bf F}$, is
maximal with respect to a particular notion of image partition regularity.

\begin{definition}\label{rapidipr} Let $u,v\in\ben\cup\{\omega\}$ and let
$A$ be a $u\times v$ matrix with entries from $\beq$. $A$ is {\it rapidly IPR\/} if and only if
whenever $\ben$ is finitely coloured and $p$ is a prime, there exists $\vec x\in \ben^v$ such 
that the entries of $A\vec x$ are monochromatic and whenever $i+1<v$ and $s\in\omega$,
if $p^s\leq x_i$, then $p^{s+8}$ divides $x_{i+1}$.
\end{definition}

We observe that ${\bf F}$, indeed all Milliken-Taylor matrices with final coefficient positive,  are rapidly IPR. To see this,
suppose that  $M$ is a Milliken-Taylor matrix determined by the compressed sequence
$\vec a=\langle a_1,a_2,\ldots,a_k\rangle$ in $\bez$ where $a_k>0$, let $p$ be a prime, and
let $\ben$ be finitely coloured.  Let $q$ be an idempotent in
$\beta\ben$.  Define $f:\ben^k\to\bez$ by
$f(x_1,x_2,\ldots,x_k)=a_1x_1+a_2x_2+\ldots+a_kx_k$ and
define $h:\bigcup_{m=1}^\infty\ben^m\to q$ as follows.  If
$(x_1,x_2,\ldots,x_m)\in\ben^m$ and $s=\max\{t\in\omega:p^t\leq x_m\}$,
then $h(x_1,x_2,\ldots,x_m)=p^{s+8}\ben$.  (By \cite[Lemma 6.6]{HS}, 
$p^{s+8}\ben\in q$.) Then by \cite[Theorem 3.3]{HLSa} one may choose
$\langle x_t\rangle_{t=0}^\infty$ as required.

In particular, since a Milliken-Taylor matrix determined by the compressed sequence
$\vec a=\langle a_1,a_2,\ldots,a_k\rangle$ with $k>1$ is not centrally IPR by
Theorem \ref{MTsep}, we see that rapidly IPR matrices need not by centrally IPR.
On the other hand, the matrix $A=\left(\begin{array}{cc}0&1\\ 1&2\end{array}\right)$
is strongly centrally IPR, since it is a first entries matrix, but is not
rapidly IPR. To see the latter assertion, colour $x\in\ben$ by whether
$\max\{t\in\omega:2^t\leq x\}$ is even or odd and let $p=2$.

We shall show in Theorem \ref{notrapidipr} that ${\bf F}$ is maximal among
rapidly IPR matrices with integer entries.  To do this
we will utilize the representation of integers to negative bases, as was done
in \cite{HLSb}.

We omit the routine proof of the following lemma.

\begin{lemma}\label{negbase} Let $p\in\ben\setminus\{1\}$, let $s\in\omega$, and
let $x\in\bez\setminus\{0\}$. There exist $\langle d_i\rangle_{i=0}^s$ with each $d_i\in\ohat{p-1}$ and
$d_s>0$ such that $x=\sum_{i=0}^sd_i(-p)^i$ if and only if  
\begin{itemize}
\item[(1)] $s$ is even and $\frac{p^s+p}{p+1}\leq x\leq\frac{p^{s+2}-1}{p+1}$ or
\item[(2)] $s$ is odd and $\frac{-p^{s+2}+p}{p+1}\leq x\leq\frac{-p^s-1}{p+1}$.
\end{itemize}
\end{lemma}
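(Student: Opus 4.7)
The plan is to prove the two directions separately. For the forward direction (existence of a representation of length $s+1$ forces the stated inequalities), I would fix $s$ together with the constraints $d_i\in\ohat{p-1}$ and $d_s\ge 1$, and locate the extremes of $x=\sum_{i=0}^s d_i(-p)^i$. Splitting the sum into its even-indexed (positive) and odd-indexed (negative) contributions, one sees that when $s$ is even the minimum of $x$ is attained with $d_s=1$, $d_i=p-1$ at every odd $i<s$, and $d_i=0$ at the remaining even positions, while the maximum is attained with $d_i=p-1$ at every even $i\le s$ and $d_i=0$ at every odd $i<s$. The two extremes telescope via $\sum_{j=0}^m p^{2j}=(p^{2m+2}-1)/(p^2-1)$ to $(p^s+p)/(p+1)$ and $(p^{s+2}-1)/(p+1)$ respectively, producing (1); the case $s$ odd is symmetric and produces (2).

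For the backward direction I would invoke the standard finite base-$(-p)$ expansion of a nonzero integer: given $x\neq 0$, pick $d_0\in\ohat{p-1}$ with $d_0\equiv x\pmod p$, set $x'=-(x-d_0)/p\in\bez$, and iterate. The step shrinks absolute value since $|x'|\le(|x|+p-1)/p<|x|$ whenever $|x|\ge p$, so the procedure terminates and yields $x=\sum_{i=0}^{s^\star}d_i(-p)^i$ with $d_{s^\star}\neq 0$. By the forward direction, this $x$ lies in the interval described by (1) or (2) with parameter $s^\star$. I would then check that the intervals in (1) (for $s=0,2,4,\ldots$) together with those in (2) (for $s=1,3,5,\ldots$) are pairwise disjoint and partition $\bez\setminus\{0\}$: the even-parameter intervals lie in $\ben$ and the odd ones in $-\ben$, and within each parity class the top of the range for $s$ is exactly one below the bottom of the range for $s+2$, as witnessed by the identities
$$\frac{p^{s+2}-1}{p+1}+1=\frac{p^{s+2}+p}{p+1},\qquad \frac{-p^s-1}{p+1}+1=\frac{-p^s+p}{p+1}.$$
Hence the interval specified in the hypothesis for a given $x$ must coincide with the one containing $x$ in its canonical base-$(-p)$ expansion, forcing $s^\star=s$ and supplying the required digits.

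There is no genuine conceptual obstacle: the main thing to be careful about is the contiguity/partition check in the backward direction, and this is done by the two one-line algebraic identities displayed above together with the observation that the extreme values computed in the forward direction account for every integer in $\ben$ (resp.\ $-\ben$) exactly once as $s$ ranges over the even (resp.\ odd) nonnegative integers. This is presumably why the proof is described as routine.
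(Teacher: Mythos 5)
Your proposal is correct and complete; the paper explicitly omits this proof as routine, so there is nothing to compare it against, and your argument (extremal digit choices giving the two endpoints for the forward direction, then the canonical base-$(-p)$ expansion combined with the observation that the intervals for even $s$ tile $\ben$ and those for odd $s$ tile $-\ben$) is exactly the standard argument the authors presumably had in mind. The endpoint identities you display are the right contiguity check, and the termination of the digit-extraction step is handled adequately.
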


It follows immediately from Lemma \ref{negbase} that given $p\in\ben\setminus\{1\}$ and
$x\in\bez$, there is a unique choice of $\langle d_i\rangle_{i=0}^\infty$ with each
$d_i\in\ohat{p-1}$ such that $x=\sum_{i=0}^\infty d_i(-p)^i$.  In the following
definition we suppress the dependence of $d_i(x)$ and $\supp(x)$ on $p$ because we will
be using only one value of $p$ in the proof of Theorem \ref{notrapidipr}.

\begin{definition}\label{defdandsupp} Let $x\in\bez$ and let $p\in\ben\setminus\{1\}$.
\begin{itemize}
\item[(a)] $\langle d_i(x)\rangle_{i=0}^\infty$ is the unique sequence in $\ohat{p-1}$
such that\hfill  $x=\sum_{i=0}^\infty d_i(x)(-p)^i$.
\item[(b)] $\supp(x)=\{i\in\omega:d_i(x)\neq 0\}$.
\end{itemize}\end{definition}

\begin{lemma}\label{suppax} Let $x\in\ben$, let $a\in\bez\setminus\{0\}$,
and let $p\in\ben$ with $p>|a|$.  Let $s=\max\ \supp(x)$ and let
$r=\max\ \supp(ax)$. 
\begin{itemize}
\item[(1)] $p^{s-2}<x<p^{s+1}$.
\item[(2)] If $a>0$, then $s\leq r\leq s+2$.
\item[(3)] If $a<0$, then $s-1\leq r\leq s+1$.
\end{itemize}
\end{lemma}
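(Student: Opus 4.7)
The plan is to apply Lemma \ref{negbase} to both $x$ and $ax$ and to check compatibility of the resulting value ranges; the entire argument will be computational.

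For part (1), since $x\in\ben$, the representation $x=\sum_{i=0}^{s}d_i(x)(-p)^i$ must evaluate to a positive integer, so Lemma \ref{negbase} will apply in case (1), forcing $s$ to be even and giving
\[
\frac{p^s+p}{p+1}\leq x\leq \frac{p^{s+2}-1}{p+1}.
\]
The upper bound will yield $x<p^{s+1}$ (since $p^{s+2}<p^{s+1}(p+1)$), and the lower bound, combined with the inequality $p+1<p^2$, will yield $x>p^{s-2}$ (with the corner case $s=0$ handled by noting $x\geq 1>p^{-2}$).

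For part (2), suppose $a>0$, so $ax>0$ and $r$ is even. I will use the sandwich $x\leq ax\leq (p-1)x$ together with the Lemma \ref{negbase} bounds for $ax$ in terms of $r$. To rule out $r\leq s-2$, the upper bound $ax\leq \frac{p^{r+2}-1}{p+1}\leq \frac{p^s-1}{p+1}$ will clash with $ax\geq x\geq \frac{p^s+p}{p+1}$, forcing $p\leq -1$. To rule out $r\geq s+4$, the lower bound $ax\geq \frac{p^r+p}{p+1}\geq \frac{p^{s+4}+p}{p+1}$ will clash with $ax\leq (p-1)\frac{p^{s+2}-1}{p+1}<\frac{p^{s+3}}{p+1}$, which is immediate from $p^{s+4}>p^{s+3}$.

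For part (3), suppose $a<0$, so $ax<0$ and $r$ is odd. Now the case (2) bounds of Lemma \ref{negbase} will give $\frac{p^r+1}{p+1}\leq |ax|\leq \frac{p^{r+2}-p}{p+1}$, and again $x\leq |ax|\leq (p-1)x$. Symmetric arguments will rule out $r\leq s-3$ (the upper bound $|ax|\leq \frac{p^{s-1}-p}{p+1}$ versus the lower bound $|ax|\geq \frac{p^s+p}{p+1}$) and $r\geq s+3$ (the lower bound $|ax|\geq \frac{p^{s+3}+1}{p+1}$ versus the upper bound $|ax|\leq (p-1)\frac{p^{s+2}-1}{p+1}<\frac{p^{s+3}}{p+1}$), leaving only $r\in\{s-1,s+1\}$. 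The only real obstacle will be bookkeeping: one must use the correct case of Lemma \ref{negbase} depending on the sign of the integer in question, and compare each of the four boundary bounds strictly against its counterpart.
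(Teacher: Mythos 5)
Your proposal is correct and takes essentially the same approach as the paper: both apply Lemma \ref{negbase} to $x$ and to $ax$, sandwich $ax$ using $|a|\leq p-1$, compare the resulting bounds, and use the parity of $r$ (forced by the sign of $ax$) to pin down the admissible values. The arithmetic in all three parts checks out.
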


\begin{proof} We have by Lemma \ref{negbase} that

\medskip
\hbox to\hsize{($*$)\hfill $\displaystyle\frac{p^s+p}{p+1}\leq x\leq\frac{p^{s+2}-1}{p+1}$.\hfill}
\medskip

\noindent
Conclusion (1) then follows immediately.
Conclusions (2) and (3) are derived in the same way.  
We will do the computations for (3), since they are
slightly more complicated. 

So assume $a<0$. By ($*$) we have $a\frac{p^{s+2}-1}{p+1}\leq ax\leq a\frac{p^s+p}{p+1}$
and by Lemma \ref{negbase} we have that
$\frac{-p^{r+2}+p}{p+1}\leq ax\leq\frac{-p^r-1}{p+1}$.
Thus we have that $a(p^{s+2}-1)\leq -p^r-1$ and
$-p^{r+2}+p\leq a(p^s+p)$.  Consequently $p^r+1\leq |a|(p^{s+2}-1)<p^{s+3}-p$
and $p^s+p\leq |a|(p^s+p)\leq p^{r+2}-p$. Since $p^r+p+1<p^{s+3}$, we have that
$r<s+3$ so, since $r$ is odd, $r\leq s+1$.  Since
$p^s+2p\leq p^{r+2}$, $s<r+2$ so $r\geq s-1$.
\end{proof}

In the following theorem we will show that one cannot add any row $\vec r$ to
${\bf F}$ whose nonzero entries in order are $a_1,a_2,\ldots,a_k$ and remain
rapidly IPR unless
$a_1=a_2=\ldots=a_k=1$ (in which case $\vec r$ is already a row of ${\bf F}$).
By way of contrast, by Theorem \ref{addfinite}, if any $a_i=1$, then
$\left(\begin{array}{c}\vec r\\ {\bf F}\end{array}\right)$ is strongly
centrally IPR (because the columns can be rearranged so that
$\left(\begin{array}{c}\vec r\\ {\bf F}\end{array}\right)$ extends a finite
first entries matrix).

\begin{theorem}\label{notrapidipr} The Finite Sums matrix ${\bf F}$ is maximal among
rapidly IPR matrices with integer entries.\end{theorem}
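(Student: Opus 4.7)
The plan is to argue by contradiction. Suppose $\vec r$ has nonzero integer entries $a_1,\ldots,a_k$ at columns $i_1<\cdots<i_k$, with some $a_j\neq 1$, and that $\left(\begin{array}{c}\vec r\\ \mathbf F\end{array}\right)$ is rapidly IPR; I will exhibit a prime $p$ and a finite colouring of $\ben$ for which no rapidly-growing monochromatic $\vec x$ exists.

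Choose a prime $p > \sum_{j=1}^k|a_j|$, so $p\nmid a_j$ for every $j$. All analysis is in base $-p$ (Lemma \ref{negbase}, Definition \ref{defdandsupp}). The first structural step is to use Lemma \ref{suppax}(1), which gives $p^{s_n-2} < x_n$ for $s_n = M(x_n)$, together with the rapid-growth hypothesis to conclude that $m(x_{n+1}) \geq s_n + 6$. Since $p \nmid a_j$ we have $m(a_j x_{i_j}) = m(x_{i_j})$, and Lemma \ref{suppax} gives $M(a_j x_{i_j}) \leq s_{i_j}+2$; hence the summands $a_j x_{i_j}$ of $\vec r\cdot\vec x$ have pairwise disjoint supports with gaps $\geq 4$, so in base $-p$ both $\vec r\cdot\vec x$ and any element of $FS(\vec x)$ are obtained by concatenating the expansions of their summands, with no inter-block carries.

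Next, I define the colouring $\psi:\ben\to\Phi$ (with $\Phi$ finite) to record (i) the lowest digit $d_{m(x)}(x)$; (ii) the top data $(d_{M(x)}(x),d_{M(x)-1}(x),M(x)\bmod 3)$; and (iii) the tuple of first digits of the first $K$ maximal blocks of $x$ in base $-p$ for a fixed large $K$ (padding with $d_{m(x)}(x)$ if $x$ has fewer than $K$ blocks), together with the digit pattern $d_{m(x)+1}(x),\ldots,d_{m(x)+L}(x)$ at small offsets, for a fixed $L\geq 6$. Rapidly IPR produces a monochromatic witness $\vec x$; the concatenation structure plus monochromaticity of $FS(\vec x)$ then forces all $x_n$ to share a common lowest digit $\beta$, a common bottom digit pattern, a common mod-$3$ residue of $s_n$ with common top digits $(\alpha,\alpha')$, and every block of every $x_n$ to have first digit $\beta$.

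Matching $\psi(\vec r\cdot\vec x)=\psi(x_0)$ then pins $a_j=1$: from~(i), using the identity $d_m(p^m z)\equiv (-1)^m z\pmod p$ on $z=x_{i_1}/p^{m_{i_1}}$, one computes $d_{m_{i_1}}(a_1 x_{i_1})\equiv a_1\beta\pmod p$, and matching $\beta$ with $|a_1|<p$ forces $a_1=1$; from~(ii), positivity of $\vec r\cdot\vec x$ forces $a_k>0$, the mod-$3$ constraint together with Lemma \ref{suppax}(2) rules out $M(a_k x_{i_k})=s_{i_k}+2$, and a direct analysis of how multiplication by $a_k\geq 2$ in base $-p$ either carries and pushes $M(a_k x_{i_k})$ up by $2$ (impossible) or leaves the top digit as $a_k\alpha\not\equiv\alpha$ (also impossible) yields $a_k=1$; from~(iii), the $j$-th chunk of first-digit entries in $\psi(\vec r\cdot\vec x)$ begins with the first digit of $a_j x_{i_j}$'s first block, which is $\equiv a_j\beta\pmod p$, so forcing each entry of the tuple to equal $\beta$ gives $a_j=1$ for every $j$, contradicting our assumption. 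The most delicate technical step is the last one: when some $a_j x_{i_j}$ splits into multiple blocks in base $-p$ (which happens precisely when the normalisation of $a_j\beta$ introduces carries, i.e.\ when $a_j\beta\geq p$), the $j$-th chunk may slide past position $K$ in the tuple; the plan is to use the bottom-pattern data in~(iii) to force $d_{m(x)+l}(x)=0$ for $1\leq l<L$, which pins the lowest block of each $x_n$ to size one and hence bounds the block count of each $a_j x_{i_j}$ by the base-$(-p)$ length of $a_j\beta$, so that $K$ may be chosen as a fixed function of $k$, $p$, and $\max|a_j|$ and every $j$-th chunk is guaranteed to appear in the first $K$ entries. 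This bookkeeping — essentially forcing the $x_n$ into a `normal form' compatible with the rapid-growth separation already established — is the principal technical obstacle.
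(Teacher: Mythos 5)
Your setup (base $-p$ expansions, a prime exceeding $\sum|a_j|$, and the use of the rapid-growth hypothesis with Lemma \ref{suppax} to make the summands $a_jx_{i_j}$ occupy widely separated digit blocks) matches the paper's, and your treatment of $a_1$ via the least significant digit is essentially the paper's function $f$. But the core of your colouring --- item (iii), the tuple of first digits of the first $K$ maximal blocks --- does not work, and this is a genuine gap rather than a repairable technicality. A finite colouring cannot bound the number of blocks of the $x_n$: a perfectly good monochromatic witness may have every $x_n$ consisting of more than $K$ blocks (for instance with first-digit pattern $\beta,\gamma,\beta,\gamma,\ldots$ repeated $K$ times in every $x_n$). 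In that case the first $K$ blocks of any finite sum $x_{n_1}+\cdots+x_{n_j}$ are just the first $K$ blocks of $x_{n_1}$, so the tuple is automatically constant on $FS(\vec x)$ while telling you nothing about the later summands; your deduction that ``every block of every $x_n$ has first digit $\beta$'' fails, and the $j$-th ``chunk'' of $\vec r\cdot\vec x$ for $j\geq 2$ never appears among the first $K$ entries at all. Your proposed fix is also unsound on two counts: monochromaticity of $FS(\vec x)$ can only force the bottom digit pattern $d_{m(x)+1}(x),\ldots,d_{m(x)+L}(x)$ to be \emph{constant} (it is inherited from the lowest summand), not to be \emph{zero}; and even if the lowest block of each $x_n$ were a singleton, that would not bound the total number of blocks of $x_{i_j}$, hence not of $a_jx_{i_j}$ either.

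This is precisely the obstacle the paper's proof is built to circumvent. Instead of recording block positions, it fixes a local ``junction pattern'' $v0\cdots0u_0u_1u_2u_3$ (with $\langle u_0,u_1,u_2,u_3\rangle$ the top digits $\phi(x_0)$ and $v$ the bottom digit of $a_rx_0$ for the first coefficient $a_r\neq 1$) and colours by the \emph{number of occurrences of this pattern modulo $p$}, via $\psi_{v,u_0,u_1,u_2,u_3}(a_ix)$ for each $i$. The point of Lemma \ref{psisum} is that this count is additive over widely separated summands, with a correction of exactly $+1$ at a junction precisely when the left summand has coefficient $1$ and the right summand's bottom digit is $v$; monochromaticity of $x_j$, $x_{j+1}$, $x_j+x_{j+1}$ then forces $\psi(a_ix_j)=0$ for all $i,j$, so $\psi(\vec r\cdot\vec x)$ equals the number of such junctions in $\vec r$, which is between $1$ and $k-1<p$ and hence nonzero mod $p$ --- contradicting $\psi(\vec r\cdot\vec x)=\psi(x_0)=0$. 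You would need to replace your block-tuple with some such additive, modular invariant; without it the argument does not go through.
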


\begin{proof} Suppose not and let $\vec r\in\bez^\omega$ with finitely many nonzero
entries and not all entries in $\{0,1\}$ such that $B=\left(\begin{array}{c} \vec r\\
{\bf F}\end{array}\right)$ is rapidly IPR.  Assume that the nonzero entries
of $\vec r$ are $a_1,a_2,\ldots,a_k$ in order and that they occur in columns
$j(1),j(2),\ldots,j(k)$ respectively.  Let $r=\min\{i\in\nhat{k}: a_i\neq 0\}$.
Pick a prime $p$ such that $k<p$ and for all $i\in\nhat{k}$, $2|a_i|<p$.

For $x\in\bez\setminus\{0\}$, define $f(x)=d_{\min\ \subsupp(x)}(x)$, the
least significant digit of $x$ in the base $-p$ expansion.
For $x\in\bez\setminus\{0\}$ with $\min\ \supp(x)=s\geq 3$, define
$\phi(x)=\langle u_0,u_1,u_2,u_3\rangle\in\nhat{p-1}\times\ohat{p-1}^3$,
where for $i\in\{0,1,2,3\}$, $u_i=d_{s-i}(x)$.

For $x\in\bez\setminus\{0\}$ and 
$\langle v,u_0,u_1,u_2,u_3\rangle\in\nhat{p-1}^2\times\ohat{p-1}^3$,
let 
$$\begin{array}{rl}G_{v,u_0,u_1,u_2,u_3}(x)=\{(s,t):& s\in 2\ben\,,\,t\in\ben\,,\,t>s+3\,,\,d_t(x)=v\,,\\
&\hbox{for }i\in \{0,1,2,3\}\,,\,d_{s-i}(x)=u_i\,,\\
&\hbox{and for }s<i<t\,,\,d_i(x)=0\}\,.\end{array}$$ 
Thus $G_{v,u_0,u_1,u_2,u_3}(x)$ is the set of ``gaps'' of the form
$v0\ldots0u_0u_1u_2u_3$ with $u_0$ in even position and at least three $0$'s between $v$ and
$u_0$, occurring in the base $-p$ expansion of $x$ written with the most
significant digit on the left.  Define $\psi_{v,u_0,u_1,u_2,u_3}(x)\in\ohat{p-1}$
by 
$$\psi_{v,u_0,u_1,u_2,u_3}(x)\equiv |G_{v,u_0,u_1,u_2,u_3}(x)|\ (\mod p)\,.$$

Let $\theta$ be a finite colouring of $\ben$ such that one colour class
is $\nhat{p^4}$ and for $x,y\in\ben\setminus\nhat{p^4}$, $\theta(x)=\theta(y)$ 
if and only if
\begin{itemize}
\item[(1)] $\phi(x)=\phi(y)$;
\item[(2)] $f(x)=f(y)$; and
\item[(3)] for all $\langle v,u_0,u_1,u_2,u_3\rangle\in\nhat{p-1}^2\times\ohat{p-1}^3$
and all\hfill\break
 $i\in\nhat{k}$, $\psi_{v,u_0,u_1,u_2,u_3}(a_ix)=\psi_{v,u_0,u_1,u_2,u_3}(a_iy)$.
\end{itemize}
Pick $\vec x\in\ben^\omega$ such that $B\vec x$ is monochromatic with respect to
$\theta$ and for all $t,s\in \omega$, if $p^s\leq x_t$, then $p^{s+8}$ divides $x_{t+1}$.

We note that for all $i,j\in\nhat{k}$ and all $t<\omega$,
$\max\ \supp(a_ix_t)+3<\min\ \supp(a_jx_{t+1})$. To see this,
let $s=\max\ \supp(x_t)$. Then by Lemma \ref{suppax}(1), 
$x_t>p^{s-2}$ so $p^{s+6}$ divides $x_{t+1}$, and thus
$\min\ \supp(a_jx_{t+1})=\min\ \supp(x_{t+1})
>s+5\geq\max\ \supp(a_ix_t)+3$, where the last inequality holds
by Lemma \ref{suppax}(2) or (3).

Let $\langle u_0,u_1,u_2,u_3\rangle=\phi(x_0)$, the constant value 
of $\phi$ on the entries of $B\vec x$.  Let 
$v=f(a_rx_0)$.  (If $w$ is the constant value of $f$ on the
entries of $B\vec x$, then $v\equiv a_rw\ (\mod p)$.)
Let $\psi=\psi_{v,u_0,u_1,u_2,u_3}$.

\begin{lemma}\label{phiax} Let $x\in\ben$ with $x>p^4$ and
assume that $\phi(x)=\langle u_0,u_1,u_2,u_3\rangle$.
If $a\in\ben$ with $1<a<\frac{p}{2}$, then $\phi(ax)\neq\langle u_0,u_1,u_2,u_3\rangle$.
\end{lemma}

\begin{proof}  Suppose that $\phi(ax)=\langle u_0,u_1,u_2,u_3\rangle$. 
Then the four most significant digits in the base $-p$ expansion of $x$ and $ax$ are the 
same so there exists $m\in\omega$ such that
$\max\ \supp( p^{2m}x))=\max\ \supp(ax))=s$, say. So we have $p^{2m}x=y+z$
and $ax=w+z$ for some $y,z,w\in \bez$ satisfying $\max\ \supp(y))\leq s-4$ and
$\max\ \supp(w))\leq s-4$. It follows from Lemma \ref{suppax}(1)  that $|p^{2m}-a|x<2p^{s-3}$.
Since $|p^{2m}-a|\geq 1$ we have that $x<2p^{s-3}$ so that $ax<p^{s-2}$, contradicting 
Lemma  \ref{suppax}(1).
\end{proof}

\begin{lemma}\label{psisum} Let $x\in\ben$ with $x>p^4$, let 
$y\in\bez\setminus\{0\}$ such that $\max\ \supp(x)+5<\min\ \supp(y)$,
and let $i\in\nhat{k}$.  If $\phi(x)=\langle u_0,u_1,u_2,u_3\rangle$, 
then $\mod p$
$$\psi(a_ix+y)\equiv\left\{\begin{array}{ll}\psi(a_ix)+\psi(y)+1&\hbox{\rm if }a_i=1\hbox{\rm\ and }f(y)=v\\
\psi(a_ix)+\psi(y)&\hbox{\rm otherwise}\end{array}\right.$$
\end{lemma}

\begin{proof} $G_{v,u_0,u_1,u_2,u_3}(a_ix+y)=G_{v,u_0,u_1,u_2,u_3}(a_ix)\cup G_{v,u_0,u_1,u_2,u_3}(y)
\cup H$, where $H =\big\{\big(\max\ \supp(a_ix),\min\ \supp(y)\big)\big\}\hbox{ if }
a_ix>0, \phi(a_ix)=\langle u_0,u_1,u_2,u_3\rangle\hbox{, and}\break f(y)=v$, and 
$H=\emp$ otherwise.  If $a_i<0$, then $a_ix<0$, and  by Lemma \ref{phiax}, if $a_i>1$, then
$\phi(a_ix)\neq\langle u_0,u_1,u_2,u_3\rangle$.
\end{proof}

Since $f(x_0)=f(a_1x_{j(1)}+a_2x_{j(2)}+\ldots+a_kx_{j(k)})=f(a_1x_{j(1)})$, we have that
$a_1=1$ so $r>1$. 

Given any $i\in\nhat{k}$ and any $j<\omega$, $\psi(a_ix_j+a_ix_{j+1})=
\psi(a_ix_j)=\psi(a_ix_{j+1})$ since $x_j$, $x_{j+1}$ and $x_j+x_{j+1}$ are all
entries of $B\vec x$.  Also, either
\begin{itemize}
\item[(1)] $a_i\neq 1$ in which case either $a_ix_j<0$ or, 
by Lemma \ref{phiax}, $\phi(a_ix_j)\neq\langle u_0,u_1,u_2,u_3\rangle$, or 
\item[(2)] $a_i\neq r$
in which case $f(a_ix_{j+1})\neq v$.  
\end{itemize} Therefore by Lemma \ref{psisum},
$\psi(a_ix_j+a_ix_{j+1})=\psi(a_ix_j)+\psi(a_ix_{j+1})$ so that $\psi(a_ix_j)=0$.

By repeated applications of Lemma \ref{psisum}, beginning with $\psi(a_{k-1}x_{j(k-1)}+a_kx_{j(k)})$,
we see that $\psi(a_1x_{j(1)}+a_2x_{j(2)}+\ldots+a_kx_{j(k)})$ is the number of $i\in\nhat{k-1}$ for 
which $a_i=1$ and $a_{i+1}=r$.  Since this number is at least $1$ and less than $p$,
we have that $\psi(a_1x_{j(1)}+a_2x_{j(2)}+\ldots+a_kx_{j(k)})\neq\psi(x_0)$, a contradiction.
\end{proof}

\section{Image domination and image maximality}

We shall say that a matrix $A$ is {\it image maximal\/}
provided that whenever $B$ is an IPR matrix extending $A$, that
is $B$ consists of $A$ with some rows added, then $A$ image dominates $B$.

We note that the Finite Sums matrix ${\bf F}$ is not image maximal.  Indeed,
Let $B$ be ${\bf F}$ with the row $\left(\begin{array}{ccccc}1&2&0&0&\ldots\end{array}\right)$
added. By Theorem \ref{addfinite}, $B$ is IPR because $\left(\begin{array}{c}1\hskip 10 pt 2\\ {\bf F}_2\end{array}\right)$
is a first entries matrix.
For $n\in\omega$, let $x_n=2^{2n}$.  Then
$FS(\langle 2^{2n}\rangle_{n=0}^\infty)=Im({\bf F}\vec x)$ and 
$FS(\langle 2^{2n}\rangle_{n=0}^\infty)$ contains no image of $B$.  (One
cannot have $\{y_0,y_1,y_0+y_1,y_0+2y_1\}\subseteq FS(\langle 2^{2n}\rangle_{n=0}^\infty)$.)

We show now that the DH-matrix ${\bf D}$ is {\it finitely image maximal} in 
the sense that any IPR extension of ${\bf D}$ obtained by adding finitely many rows is
image dominated by ${\bf D}$.

\begin{theorem}\label{DHfmax} Let $m\in\ben$ and let $C$ be an $m\times\omega$ matrix
such that $A=\left(\begin{array}{c} C\\{\bf D}\end{array}\right)$ is IPR.  
Then ${\bf D}$ image dominates $A$.\end{theorem}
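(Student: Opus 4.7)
The plan is to adapt the inductive construction from the proof of Theorem \ref{DHScentral}, but with the abstract central set there replaced by the concrete image $FS(\langle Y_n\rangle_{n=0}^\infty)$ of ${\bf D}$ that we are handed. Write $\vec y_n$ for the block of $\vec x$ corresponding to the columns of $B_n$, and set $Y_n=Im(B_n\vec y_n)$, so that $Im({\bf D}\vec x)=FS(\langle Y_n\rangle_{n=0}^\infty)$; the goal is to produce $\vec z\in\ben^\omega$ with $Im(A\vec z)\subseteq FS(\langle Y_n\rangle_{n=0}^\infty)$.

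First I would locate a suitable idempotent. The set $T:=\bigcap_{N\in\omega}\overline{FS(\langle Y_n\rangle_{n\geq N})}$ is a closed subsemigroup of $\beta\ben$, so it has idempotents; I would pick $p$ to be minimal in $T$, which forces $FS(\langle Y_n\rangle_{n\geq N})\in p$ for every $N$. Setting $S^\star:=\{x\in FS(\langle Y_n\rangle_{n=0}^\infty):-x+FS(\langle Y_n\rangle_{n=0}^\infty)\in p\}$, \cite[Lemma 4.14]{HS} gives $S^\star\in p$ and $x\in S^\star\Rightarrow -x+S^\star\in p$, which is the shrinkability property that fuels the induction.

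Second, since $C$ has only $m$ rows, each with finitely many nonzero entries, fix $M$ with $v(0)+\cdots+v(M)$ at least as large as the index of any nonzero column of $C$. The submatrix $\tilde A$ of $A$ consisting of the rows of $C$ together with all rows of ${\bf D}$ supported in the first $M+1$ blocks is a finite IPR matrix (being a submatrix of the IPR $A$). Invoking a subsemigroup version of the Central Sets Theorem applied to $p$, I would select $\vec z_0\in\ben^{v(0)},\ldots,\vec z_M\in\ben^{v(M)}$ with every entry of $\tilde A(\vec z_0,\ldots,\vec z_M)$ in $S^\star$, and then continue inductively in the manner of Theorem \ref{DHScentral}: at each stage $n\geq M$, let $V_n$ be the finite set of entries of $A(\vec z_0,\ldots,\vec z_n,0,0,\ldots)$ produced so far, set $A_n:=\bigcap_{v\in V_n}(-v+S^\star)\in p$, and pick $\vec z_{n+1}\in\ben^{v(n+1)}$ with $B_{n+1}\vec z_{n+1}\in A_n^{u(n+1)}$. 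The resulting $\vec z$ satisfies $Im(A\vec z)\subseteq S^\star\subseteq FS(\langle Y_n\rangle_{n=0}^\infty)$, as required.

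The main obstacle is justifying that each $A_n\in p$ is a ``J-set''---i.e., contains an image of every finite IPR matrix---despite $p$ being minimal only in $T$ and not in all of $\beta\ben$. For many choices of $\vec x$ the set $FS(\langle Y_n\rangle_{n=0}^\infty)$ is not piecewise syndetic in $\ben$, so $T\cap K(\beta\ben)=\emp$, no central set of $\ben$ is contained in $FS(\langle Y_n\rangle_{n=0}^\infty)$, and we cannot simply appeal to Theorem \ref{charipr}(f). The required property of $A_n$ rests on the self-similar structure of $T$: its members are $FS$-sets whose atoms $Y_n$ are themselves images of the finite IPR matrices $B_n$, and this inherited Ramsey-theoretic richness, together with the minimality of $p$ in $T$, should suffice to realise every finite IPR matrix inside each set of $p$, which is what makes the induction go through.
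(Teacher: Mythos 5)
Your construction stalls exactly where you say it does, and the ``should suffice'' at the end is not a proof: the entire theorem has been reduced to the claim that every member of a minimal idempotent $p$ of $T=\bigcap_{N}\overline{FS(\langle Y_n\rangle_{n\geq N})}$ contains an image of every finite IPR matrix, and nothing in the paper or in the cited literature delivers that. Theorem \ref{charipr}(f) is unavailable because, as you note, members of $p$ need not be central in $\ben$; what the Galvin--Glazer argument does give you inside an arbitrary member of an idempotent of $T$ is only a finite-sums subsystem $FS(\langle z_i\rangle)$ with one element $z_i$ chosen from each of finitely many blocks, and such sets need not contain an image of, say, the length-$3$ van der Waerden matrix (compare the paper's own observation that $FS(\langle 2^{2n}\rangle_{n=0}^\infty)$ omits images of very small first entries matrices). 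The internal structure of the $Y_n$'s as images of the $B_n$'s is not automatically inherited by members of $p$, and a minimal idempotent of $T$ carries no obvious extra leverage here; establishing the relativized ``J-set'' property you need would amount to reproving a Deuber--Hindman-type theorem inside $FS(\langle Y_n\rangle_{n=0}^\infty)$, which is at least as hard as the statement being proved. The same objection applies to the unreferenced ``subsemigroup version of the Central Sets Theorem'' invoked to start the induction.

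The paper's proof avoids all of this: it is a short, purely combinatorial re-indexing with no ultrafilters and no induction along $\vec x$. Since $C$ has finitely many rows, each with finite support, there is $\delta$ such that $C$ lives in the first $k(\delta)$ columns; the restriction of $A$ to those columns (nonzero rows, without repetition) is a finite IPR matrix, hence equals $B_l$ for some $l$, i.e.\ it already occurs as a block of ${\bf D}$. Likewise each $B_n$ with $n\geq\delta$ has its rows contained in $B_{f(n)}$ for some injection $f$ with $v(f(n))=v(n)$. Given $\vec x$, one defines $\vec y$ by routing the first $k(\delta)$ variables to block $l$ of $\vec x$ and block $n$ to block $f(n)$; every row of $A$ then computes, on $\vec y$, the same value as some genuine row of ${\bf D}$ on $\vec x$, so $Im(A\vec y)\subseteq Im({\bf D}\vec x)$ for the very vector $\vec x$ you started with. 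The key idea you are missing is that ${\bf D}$'s block structure already enumerates all finite IPR matrices, so the finite extension can be absorbed by a change of variables rather than found by a density or centrality argument.
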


\begin{proof} Let $\langle B_n\rangle_{n=0}^\infty$, 
$\langle u(n)\rangle_{n=0}^\infty$, and $\langle v(n)\rangle_{n=0}^\infty$,
be as in the construction of ${\bf D}$. Define $k(0)=0$ and for each
$n<\omega$, let $k(n+1)=k(n)+v(n)$. (Then any row of ${\bf D}$ has in
columns $k(n),k(n)+1,\ldots,k(n)+v(n)-1$ either all $0$'s or a row of
$B_n$.)

Pick $\delta\in\ben$ such that for all $i\in\ohat{m-1}$ and
all $j\geq k(\delta)$, $c_{i,j}=0$.  Let $N$ be the restriction of $A$ to
columns $0,1,\ldots,k(\delta)-1$.  Let $M$ be a finite matrix whose 
rows are the nonzero rows of $N$ without repetition.  Then $M$ is a finite
IPR matrix since each row of $M$ followed by all $0$'s is a row of $A$.
So $M=B_l$ for some $l\in\omega$.  Note that $v(l)=k(\delta)$.

Choose $f:\{\delta,\delta+1,\ldots\}\inject \ben\setminus\{0\}$
so that for each $n\geq\delta$, the rows of $B_n$ are contained in the
rows of $B_{f(n)}$ and $v\big(f(n)\big)=v(n)$.

Now let $\vec x\in\ben^{\omega}$. We shall define $\vec y$ so that
the set of entries of $A\vec y$ are contained in the set of entries
of ${\bf D}\vec x$.  For $i\in\ohat{k(\delta)-1}$, let $y_i=x_{k(l)+i}$.
For $n\geq\delta$ and $i\in\ohat{v(n)-1}$, let $y_{k(n)+i}=x_{k(f(n))+i}$.

To see that the set of entries of $A\vec y$ are contained in the set of entries
of ${\bf D}\vec x$, let $\vec r$ be a row of $A$.  Define a row $\vec s$ of ${\bf D}$ as follows.
 For $i\in\ohat{k(\delta)-1}$, let $s_{k(l)+i}=r_i$.
For $n\geq\delta$ and $i\in\ohat{v(n)-1}$, let $s_{k(f(n))+i}=r_{k(n)+i}$.
If $n\in\omega\setminus(f[\{\delta,\delta+1,\ldots\}]\cup\{l\})$ and $i\in\ohat{v(n)-1}$,
then $s_{k(n)+i}=0$.
Then $\vec r\cdot\vec y=\vec s\cdot\vec x$.\end{proof}

\begin{conjecture}\label{qimmax} The system ${\bf D}$ is image maximal.
\end{conjecture}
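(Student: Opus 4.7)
The finite case is dispatched by Theorem~\ref{DHfmax}, so I may assume $B=\left(\begin{array}{c}C\\ {\bf D}\end{array}\right)$ where $C$ is an $\omega\times\omega$ matrix with rows $\vec r_0,\vec r_1,\ldots$ furnishing infinitely many extra constraints. The natural plan is to replicate the proof of Theorem~\ref{DHfmax} in a stratified way: partition the extra rows into successive finite batches, pack each batch into a single finite IPR matrix which must therefore appear as some $B_{l_k}$ in the construction of ${\bf D}$, and assemble $\vec y$ blockwise from $\vec x$ by redirecting each batch through its chosen $B_{l_k}$.

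Concretely, fix $\vec x\in\ben^\omega$, let $\vec x^{(n)}$ be the part of $\vec x$ on columns $k(n),\ldots,k(n)+v(n)-1$, put $Y_n=Im(B_n\vec x^{(n)})$, and recall $Im({\bf D}\vec x)=FS(\langle Y_n\rangle_{n=0}^\infty)$. I would choose an increasing sequence of cut-offs $\delta_0<\delta_1<\cdots$ in $\ben$ and set
$$C^{(k)}=\{\vec r_j:\supp(\vec r_j)\subseteq\ohat{k(\delta_k)-1}\hbox{ and }\supp(\vec r_j)\not\subseteq\ohat{k(\delta_{k-1})-1}\}$$
(with the convention $\delta_{-1}=0$), so that the $C^{(k)}$ partition the extra rows. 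For each $k$, the sub-row-collection $A^{(k)}$ made up of $C^{(0)}\cup\cdots\cup C^{(k)}$ together with all rows of ${\bf D}$ is IPR, and if its restriction to the first $k(\delta_k)$ columns has only finitely many distinct nonzero rows then that restriction is a finite IPR matrix, hence equals some $B_{l_k}$ with $v(l_k)=k(\delta_k)$. Mimicking Theorem~\ref{DHfmax}, a back-and-forth argument—using that each finite IPR matrix appears as $B_m$ for infinitely many $m$—then furnishes injections $f_k\colon\{\delta_k,\delta_k+1,\ldots\}\inject\ben$ with $v(f_k(n))=v(n)$ and rows of $B_n$ contained in rows of $B_{f_k(n)}$, arranged so that the index sets $\{l_k\}\cup f_k[\{\delta_k,\delta_k+1,\ldots\}]$ are pairwise disjoint across $k$. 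From $\vec x$ one would then attempt to assemble $\vec y$ in the Theorem~\ref{DHfmax} style, copying the appropriate $\vec x$-blocks into the $\vec y$-blocks dictated by each $l_k$ and $f_k$, filling the remaining positions with zeros.

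Two serious obstacles stand in the way. First, the restriction of $A^{(k)}$ to a fixed finite column range need not produce only finitely many distinct nonzero rows, since rational entries permit infinitely many distinct extra rows to share a bounded support. Handling this would require an auxiliary structure theorem for $\omega\times v$ IPR matrices (with $v\in\ben$) of the form ``every such matrix is image-dominated by a single finite IPR matrix''; my own attempts suggest that infinitely many ``essentially distinct'' rows on a fixed finite support do in fact force a failure of image partition regularity (for instance, an infinite first-entries matrix demanding a monochromatic infinite arithmetic progression), so this auxiliary statement is plausible but non-trivial. The second, more dangerous obstacle is that different batches generally compete for the same columns of $\vec y$: if batch $k$ demands that the first $k(\delta_k)$ entries of $\vec y$ equal $\vec x^{(l_k)}$, and batch $k'$ demands that the first $k(\delta_{k'})$ entries equal $\vec x^{(l_{k'})}$, these requirements are generically incompatible because $\vec x$ is arbitrary and we have no control over whether $\vec x^{(l_{k'})}$'s prefix matches $\vec x^{(l_k)}$. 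I expect this second obstacle, rather than the first, to be the real heart of the difficulty; overcoming it will likely require choosing the $l_k$ adaptively in response to $\vec x$ and exploiting the vast redundancy of the enumeration $\langle B_n\rangle$, or instead passing to a more abstract argument using the idempotent structure of $\beta\ben$ to select $\vec y$ entry-by-entry. It is exactly at this point that either a careful inductive construction can be pushed through, or a counterexample to Conjecture~\ref{qimmax} may be hiding.
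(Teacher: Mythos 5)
The statement you are addressing is not a theorem of the paper: it is stated there as Conjecture~\ref{qimmax}, explicitly flagged by the authors as open (``we conjecture that ${\bf D}$ is image maximal, but have been unable to show this''). The only result the paper actually proves in this direction is Theorem~\ref{DHfmax}, which handles extensions of ${\bf D}$ by \emph{finitely} many rows, and your proposal correctly reproduces the mechanism of that proof. But your extension to infinitely many added rows is, by your own account, not a proof: both obstacles you name are genuine and neither is overcome. The second one in particular is fatal to the stratified construction as written. In Theorem~\ref{DHfmax} the single cut-off $\delta$ forces the prefix $y_0,\ldots,y_{k(\delta)-1}$ to be a specific block of $\vec x$ (the one sitting under $B_l$), and the remaining blocks are routed through indices disjoint from $l$. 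With infinitely many batches, batch $k$ pins down the entire prefix of $\vec y$ up to column $k(\delta_k)-1$, and batch $k+1$ pins down a strictly longer prefix; since these prefixes overlap and are dictated by unrelated blocks $\vec x^{(l_k)}$ and $\vec x^{(l_{k+1})}$ of an arbitrary $\vec x$, the two demands are incompatible in general. Nothing in your sketch (adaptive choice of $l_k$, redundancy of the enumeration, or an appeal to idempotents in $\beta\ben$) is developed far enough to resolve this, and your first obstacle --- that the restriction of the extra rows to a finite column range may contain infinitely many essentially distinct rows --- would additionally require an unproved structure theorem for $\omega\times v$ IPR matrices with $v$ finite.

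So the honest conclusion is that you have rediscovered why the finite-row argument does not iterate, which is precisely where the authors stopped as well; the conjecture remains open, and your text should be presented as a discussion of the difficulty rather than as a proof. If you want something provable along these lines, the natural intermediate targets are the auxiliary statement you isolate (every $\omega\times v$ IPR matrix with $v\in\ben$ is image dominated by a single finite IPR matrix) and Question~\ref{qbdedrows}; either would be a genuine contribution, whereas the blockwise assembly of $\vec y$ as you describe it cannot be completed without a new idea for reconciling the competing prefix constraints.
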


The DH-matrix ${\bf D}$ seems a good candidate for a universal centrally IPR
matrix.  It trivially image dominates any finite IPR matrix.  
By Theorem \ref{DHScentral} it is strongly centrally IPR.
Therefore, if ${\bf D}$ image dominates a matrix $A$, it is immediate that
$A$ is centrally IPR. We see now, however, that
$A$ need not be strongly centrally IPR.

\begin{theorem}\label{notstrong} Let $A$ be any strongly centrally IPR
matrix and let 
$$B=\left(\begin{array}{cc}1&0\\ 3&-1\\ 5&-2\\ \vdots&\vdots\end{array}\right)\,.$$
Then $B$ is not strongly centrally IPR and $A$ image dominates $B$.
\end{theorem}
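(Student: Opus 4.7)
The proof splits into two parts. For \emph{image domination}, observe that the $n$-th entry of $B\vec y$ is $(2n+1)y_0 - ny_1 = y_0 + n(2y_0 - y_1)$, so taking $y_1 = 2y_0$ collapses $Im(B\vec y)$ to the singleton $\{y_0\}$. Given any $\vec x\in\ben^u$ with $A\vec x\in\ben^t$ (such $\vec x$ exist since $A$ is IPR), pick any entry $c$ of $A\vec x$ and set $\vec y = (c,2c)$; then $Im(B\vec y) = \{c\} \subseteq Im(A\vec x)$. This works for any strongly centrally IPR $A$.

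For the claim that $B$ is not strongly centrally IPR, I first analyse which $\vec y = (y_0,y_1)\in\ben^2$ can fulfil the strong condition. With $d := 2y_0 - y_1$, the entries of $B\vec y$ are $y_0 + nd$ for $n\geq 0$. Requiring them all to be positive forces $d \geq 0$; requiring that entries corresponding to distinct rows be distinct forces $d\neq 0$; so $d \geq 1$, and $Im(B\vec y)$ is the infinite arithmetic progression $\{y_0 + nd : n \geq 0\}$. Thus the strong condition for $B$ would amount to the assertion that every central subset of $\ben$ contain some infinite AP of this form.

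I refute this by exhibiting a central $C\subseteq\ben$ with no infinite AP at all. Let $C = \bigcup_{k=1}^\infty \{k!, k!+1, \ldots, k!+k-1\}$. Then $C$ is \emph{thick}: for any finite $F\subseteq\omega$, choosing $k > \max F$ gives $k! + F \subseteq \{k!, k!+1, \ldots, k!+k-1\}\subseteq C$. By the standard fact that every thick subset of $\ben$ is central ($\overline{C}$ contains a left ideal of $\beta\ben$, hence a minimal left ideal, and thereby a minimal idempotent), $C$ is central. Now suppose $\{a+nd : n\geq 0\}\subseteq C$ and, for each $n$, let $k_n$ be the index with $a+nd\in\{k_n!, k_n!+1, \ldots, k_n!+k_n-1\}$; since the AP is unbounded, $k_n\to\infty$. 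Consecutive AP terms differ by $d$, so either $k_{n+1}=k_n$ (both terms in the same interval of length $k_n$, forcing $d \leq k_n - 1$) or $k_{n+1} > k_n$ (a jump across a gap, forcing $d \geq (k_n+1)! - k_n! - k_n + 1 = k_n\cdot k_n! - k_n + 1$). For $k_n$ sufficiently large relative to $d$, both options are impossible, a contradiction.

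The main substantive step is the reformulation that strongly central IPR for $B$ is equivalent to every central set containing an infinite AP; after that, the construction $C = \bigcup_k\{k!, k!+1, \ldots, k!+k-1\}$ together with ``thick implies central'' gives a clean explicit witness, and no further analysis of $\beta\ben$ is needed.
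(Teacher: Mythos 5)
Your image-domination argument is exactly the paper's: the $n$-th entry of $B\vec y$ is $y_0+n(2y_0-y_1)$, and setting $y_1=2y_0$ collapses the image to $\{y_0\}$. For the other half the paper simply cites \cite[Theorem 2.11]{HLSa}, so your self-contained argument is a genuinely different and more elementary route: you reduce the strong condition for $B$ to the assertion that every central set contains an infinite arithmetic progression, and refute that with the thick set $C=\bigcup_{k}\{k!,k!+1,\ldots,k!+k-1\}$. The reduction is sound (positivity of all entries forces $d=2y_0-y_1\geq 0$, distinctness of entries over distinct rows forces $d\neq 0$), thick sets are indeed central, and $C$ contains no infinite AP. One sentence does need repair: for $k_n$ large the first option ($k_{n+1}=k_n$, which only requires $d\leq k_n-1$) is perfectly possible, so it is not true that ``both options are impossible.'' The correct finish is that, since each block is finite, indices $n$ with $k_{n+1}>k_n$ occur infinitely often and $k_n\to\infty$ along them, and any such transition with $k_n\cdot k_n!-k_n+1>d$ yields the contradiction. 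That is a one-line fix, not a structural flaw. What your approach buys is independence from \cite{HLSa} and an explicit witness; what the citation buys the paper is generality, since the quoted theorem covers a wider class of matrices than the specific arithmetic-progression shape of $Im(B\vec y)$ that your argument exploits.
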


\begin{proof} By \cite[Theorem 2.11]{HLSa} $B$ is not strongly centrally IPR. 
To see that $A$ image dominates $B$, let $a$ be any element of an image of $A$.
Let $y_0=a$ and $y_1=2a$.  Then $Im(B\vec y)=\{a\}$.\end{proof}

One might hope (and we did) that any centrally IPR matrix is image dominated
by ${\bf D}$, or at least that any strongly centrally IPR matrix is image
dominated by ${\bf D}$.  (We knew that no Milliken-Taylor matrix which is not essentially a multiple
of ${\bf F}$ is image dominated by ${\bf D}$.)  We shall see that this fails.
To see it, we shall need another version of a DH-matrix (which is closer to the original in \cite{DH}).
The next definition differs from the description in Section \ref{secbackground} in that
here the entries are required to be non negative.

\begin{definition}\label{defmpc} Let $(m,p,c)\in\ben^3$.  A matrix $A$ is an $(m,p,c)$-matrix
if and only if $A$ is a first entries matrix with $m$ columns, all first entries are equal
to $c$, all entries of $A$ are in $\ohat{p}$, and $A$ contains all rows possible subject
to these restrictions.\end{definition} 

\begin{lemma}\label{mpcgood} Let $u,v\in\ben$ and let $A$ be a $u\times v$ matrix with
entries from $\beq$.  Then $A$ is IPR if and only if there
exist $(m,p,c)\in\ben^3$ such that for all $p'\geq p$, every
$(m,p',c)$-matrix $B$, and every $\vec y\in
\ben^m$, there exists $\vec x\in\ben^v$ such that $Im(A\vec x)\subseteq Im(B\vec y)$.\end{lemma}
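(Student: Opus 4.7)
The plan is to prove the two directions separately; both are short once one has the right tools. For the backward direction, the key observation is that any $(m,p,c)$-matrix is itself a finite first entries matrix (Definition \ref{defmpc}), hence IPR by Theorem \ref{charipr}(b). So given a finite colouring of $\ben$, I would fix any single $(m,p,c)$-matrix $B$, choose $\vec y \in \ben^m$ with $B\vec y$ monochromatic, and then invoke the hypothesis (with $p' = p$ and this $B$) to obtain $\vec x \in \ben^v$ with $Im(A\vec x) \subseteq Im(B\vec y)$; the entries of $A\vec x$ are then all of one colour, so $A$ is IPR.

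For the forward direction, I would apply Theorem \ref{charipr}(c) to produce $m \in \ben$, a $u \times m$ first entries matrix $E$ with entries in $\omega$, and a single constant $c \in \ben$ which is the only first entry of $E$, such that for every $\vec y \in \ben^m$ there exists $\vec x \in \ben^v$ with $A\vec x = E\vec y$. I would then take $p$ to be the largest entry of $E$, so that every entry of $E$ lies in $\ohat{p}$. Given any $p' \geq p$ and any $(m,p',c)$-matrix $B$, every row of $E$ is a first-entries-matrix-style row with leading non-zero entry $c$ and all entries in $\ohat{p} \subseteq \ohat{p'}$; so by the ``contains all rows possible'' clause in Definition \ref{defmpc}, each row of $E$ occurs as a row of $B$. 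Consequently, for any $\vec y \in \ben^m$, each coordinate of $E\vec y$ arises among the coordinates of $B\vec y$, giving $Im(E\vec y) \subseteq Im(B\vec y)$. Hence for each $\vec y$ I pick $\vec x$ with $A\vec x = E\vec y$ and conclude $Im(A\vec x) = Im(E\vec y) \subseteq Im(B\vec y)$, as required.

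There is essentially no obstacle here; all the work has been done in setting up Theorem \ref{charipr}(c) and in the definition of an $(m,p,c)$-matrix. The one subtlety worth emphasising is that an $(m,p',c)$-matrix is required by definition to contain \emph{all} rows satisfying the first-entry, entry-range, and first-entries-matrix shape constraints; this maximality is precisely what allows every row of the witnessing $E$ to embed into $B$, regardless of how large the entries of $E$ are, provided $p'$ is large enough.
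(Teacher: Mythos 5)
Your proof is correct and follows essentially the same route as the paper's: the backward direction via the fact that $(m,p,c)$-matrices are first entries matrices (hence IPR), and the forward direction via Theorem \ref{charipr}(c) with $p$ taken to be the largest entry of the witnessing matrix $E$, so that every row of $E$ embeds as a row of any $(m,p',c)$-matrix. The only difference is that you spell out the row-embedding step that the paper leaves implicit.
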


\begin{proof} Since $(m,p,c)$-matrices are first entries matrices, the sufficiency
is immediate.  So assume that $A$ is IPR.
Pick by Theorem \ref{charipr}(c) $m\in \ben$, a $u\times m$ matrix
$E$ with entries from $\omega$, and $c\in\ben$ such that 
$E$ satisfies the first entries condition, $c$ is the only first entry 
of $E$, and given
any $\vec y\in\ben^m$ there is some $\vec x\in\ben^v$ with
$A\vec x=E\vec y$.  Let $p$ be the maximum of all of the entries of
$E$, let $p'\geq p$, and let $B$ be an $(m,p',c)$-matrix. Let $\vec y\in \ben^m$ be given
and pick $\vec x\in\ben^v$ such that $A\vec x=E\vec y$.
Then $Im(A\vec x)=Im(E\vec y)\subseteq Im(B\vec y)$.\end{proof}

Now we define our second version of a DH-matrix.
First fix an enumeration $\langle B'_n\rangle_{n=0}^\infty$
of the $(m,p,c)$-matrices where each $B'_n$ is an $\big(m(n),p(n),c(n)\big)$-matrix.
For each $i\in\ben$, let $\vec 0_i$ be the $0$ vector with $i$ entries.
Let ${\bf D'}$ be an $\omega\times\omega$ matrix with all rows
of the form $\vec r_0\con\vec r_1\con\vec r_2\con\ldots$ where
each $\vec r_i$ is either $\vec 0_{m(i)}$ or is a row of $B'_i$, and all but
finitely many are $\vec 0_{m(i)}$.

\begin{theorem}\label{imequiv} The DH-matrices ${\bf D}$ and ${\bf D'}$ are image
equivalent.  That is, each image dominates the other.\end{theorem}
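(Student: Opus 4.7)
The plan is to prove each direction separately, leaning on two facts: every $(m,p,c)$-matrix is a first entries matrix and hence IPR (as noted after Theorem \ref{charipr}), and Lemma \ref{mpcgood} asserts that every finite IPR matrix is image-dominated by every sufficiently large $(m,p,c)$-matrix.

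For $\mathbf{D}$ image dominates $\mathbf{D'}$: Each $B'_n$ is a finite IPR matrix, so it appears in the enumeration $\langle B_k\rangle$. Distinct triples $(m,p,c)$ give distinct $(m,p,c)$-matrices, so the $B'_n$ are pairwise distinct, and we may fix an injection $g:\omega\to\omega$ with $B_{g(n)}=B'_n$ (and hence $v(g(n))=m(n)$). Given $\vec x\in\ben^\omega$, write $\vec x$ as the concatenation of its canonical blocks $\vec y_k\in\ben^{v(k)}$ (the one feeding $B_k$) and set $Y_k=Im(B_k\vec y_k)$, so $Im(\mathbf{D}\vec x)=FS(\langle Y_k\rangle_{k=0}^\infty)$. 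Define $\vec z\in\ben^\omega$ by letting its $n$-th block be $\vec y_{g(n)}$; then $Im(B'_n\vec z_n)=Y_{g(n)}$, and the injectivity of $g$ gives $Im(\mathbf{D'}\vec z)=FS(\langle Y_{g(n)}\rangle_{n=0}^\infty)\subseteq FS(\langle Y_k\rangle_{k=0}^\infty)=Im(\mathbf{D}\vec x)$.

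For $\mathbf{D'}$ image dominates $\mathbf{D}$: For each $k$, Lemma \ref{mpcgood} supplies a triple $(m_k,p_k,c_k)\in\ben^3$ such that for every $p'\geq p_k$, every $(m_k,p',c_k)$-matrix $B$, and every $\vec w\in\ben^{m_k}$, there is some $\vec u\in\ben^{v(k)}$ with $Im(B_k\vec u)\subseteq Im(B\vec w)$. As $p'$ ranges over $\{p_k,p_k+1,\ldots\}$ the $(m_k,p',c_k)$-matrices are pairwise distinct and each appears as some $B'_n$, so the set of candidate indices is infinite for each $k$; we therefore choose an injection $h:\omega\to\omega$ so that $B'_{h(k)}$ is an $(m_k,p'_k,c_k)$-matrix for some $p'_k\geq p_k$, yielding $m(h(k))=m_k$. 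Given $\vec y\in\ben^\omega$ with blocks $\vec y_n\in\ben^{m(n)}$ and $Y'_n=Im(B'_n\vec y_n)$, apply the lemma with $B=B'_{h(k)}$ and $\vec w=\vec y_{h(k)}$ to obtain $\vec x_k\in\ben^{v(k)}$ satisfying $Im(B_k\vec x_k)\subseteq Y'_{h(k)}$. Concatenating the $\vec x_k$ into $\vec x\in\ben^\omega$, the resulting $Y_k:=Im(B_k\vec x_k)$ satisfies $Y_k\subseteq Y'_{h(k)}$, and by injectivity of $h$,
$$Im(\mathbf{D}\vec x)=FS(\langle Y_k\rangle)\subseteq FS(\langle Y'_{h(k)}\rangle)\subseteq FS(\langle Y'_n\rangle)=Im(\mathbf{D'}\vec y).$$

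The main subtlety is arranging the injectivity of $g$ and $h$, which is essential because $FS(\langle Z_n\rangle)$ permits at most one summand from each $Z_n$, so a non-injective reindexing could produce sums (like $2z$ with $z\in Z_k$) that escape $FS(\langle Z_n\rangle)$. Both injections are available because the relevant pools of matching indices in the two enumerations are infinite, and can be selected greedily.
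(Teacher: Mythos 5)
Your proof is correct and follows essentially the same route as the paper: the first direction is immediate since each $B'_n$ occurs among the $B_k$, and the second uses Lemma \ref{mpcgood} to build an injection into the enumeration $\langle B'_n\rangle$ and then transfers images block by block. (The only quibble, which the paper also glosses over, is that for $m_k=1$ the $(m_k,p',c_k)$-matrices do not vary with $p'$; this is repaired by noting one may always replace $m_k$ by $m_k+1$.)
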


\begin{proof}  Let $\langle B_n\rangle_{n=0}^\infty$,
$\langle v(n)\rangle_{n=0}^\infty$, $\langle B'_n\rangle_{n=0}^\infty$,
and $\langle m(n)\rangle_{n=0}^\infty$ be as in the construction of 
${\bf D}$ and ${\bf D'}$.  Since 
each $B'_n$ is some $B_k$, the fact that ${\bf D}$ image dominates ${\bf D'}$
is immediate.  

We now show that ${\bf D'}$ image dominates ${\bf D}$.  Using
Lemma \ref{mpcgood}, inductively define $f:\omega\inject\omega$
such that for every $\vec y\in
\ben^{m(f(n))}$, there exists $\vec x\in\ben^{v(n)}$ such that $Im(B_n\vec x)\subseteq 
Im(B'_{f(n)}\vec y)$.  

Inductively define $k(n)$ and $l(n)$ for $n\in\omega$ by $k(0)=l(0)=0$, 
and for $n\in\omega$, $k(n+1)=k(n)+v(n)$ and $l(n+1)=l(n)+m(n)$.
To see that ${\bf D'}$ image dominates ${\bf D}$, let $\vec w\in
\ben^\omega$ be given.  For $n\in\omega$, define
$\vec y_n\in\ben^{m(f(n))}$ by, for
$i\in\ohat{m\big(f(n)\big)-1}$, $y_{n,i}=w_{l(f(n))+i}$, and 
pick $\vec x_n\in\ben^{v(n)}$ such that $Im(B_n\vec x_n)\subseteq
Im(B'_{f(n)}\vec y_n)$.  Define $\vec z\in\ben^\omega$ by,
for $n\in\omega$ and $i\in\ohat{v(n)-1}$,
$z_{k(n)+i}=x_{n,i}$.  Then as in the proof of Theorem
\ref{DHfmax}, one sees that $Im({\bf D}\vec z)\subseteq Im({\bf D'}\vec w)$.
\end{proof}

Let $\langle c_n\rangle_{n=1}^\infty$ be a sequence in
$\ben$ and let
$${\mathcal I}=\left(\begin{array}{ccccccccc}1&0&0&0&0&0&0&0&\ldots\\
0&1&0&0&0&0&0&0&\ldots\\
c_1&1&0&0&0&0&0&0&\ldots\\
0&0&1&0&0&0&0&0&\ldots\\
0&0&0&1&0&0&0&0&\ldots\\
c_2&0&1&1&0&0&0&0&\ldots\\
0&0&0&0&1&0&0&0&\ldots\\
0&0&0&0&0&1&0&0&\ldots\\
0&0&0&0&0&0&1&0&\ldots\\
c_3&0&0&0&1&1&1&0&\ldots\\
\vdots&\vdots&\vdots&\vdots&\vdots&\vdots&\vdots&\vdots&\ddots\end{array}
\right)\,.$$

As in \cite[Theorem 16]{BHL}, one can show that ${\mathcal I}$ is IPR.
One can in fact show that it is strongly centrally IPR.
One can also show that if the sequence $\langle c_n\rangle_{n=1}^\infty$
is unbounded and $B$ is any matrix with the property that the 
entries of each column of $B$ are bounded, then $B$ does not image
dominate ${\mathcal I}$, and in particular ${\bf D'}$ does not
image dominate ${\mathcal I}$ and therefore, in view of Theorem \ref{imequiv},
${\bf D}$ does not image dominate ${\mathcal I}$.  We omit the verification
of these assertions because we have a much stronger example.

\begin{theorem}\label{Anodom} There is an $(\omega+\omega)\times\omega$
matrix $C$ with all entries from $\{0,1,2\}$ and all column sums equal to $3$
or $4$ which is strongly centrally IPR but is not image
dominated by ${\bf D}$. \end{theorem}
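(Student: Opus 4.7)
\medskip

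\noindent\textbf{Proof plan for Theorem \ref{Anodom}.}

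The first step is to build $C$ explicitly as two stacked $\omega$-blocks. The top $\omega$ rows I would take to form a first-entries style ``sums'' system (rows of the form $e_n$ and $e_m+e_n$ for certain coupled pairs $m<n$) and the bottom $\omega$ rows to contribute the ``weighted'' information (rows of the form $2e_m+e_n$ for similarly coupled pairs). The coupling pattern must be chosen so that every variable $x_j$ appears in only finitely many rows, with the total contribution to column $j$ lying in $\{3,4\}$: the natural way is to arrange that each index $j$ is used exactly once as the ``left'' index (contributing $1+2=3$ from the two blocks) and once as the ``right'' index (contributing $1+1=2$), together with the singleton row $x_j$, so that the column sum is $3$ or $4$. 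The delicate part is to simultaneously keep every finite submatrix IPR, which is achieved by ensuring that every finite submatrix embeds into a first-entries matrix after suitable linear combinations of rows.

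To show that $C$ is strongly centrally IPR, I would argue exactly as in the proof of Theorem \ref{DHScentral}. Fix a central set $D\subseteq\ben$ and a minimal idempotent $p\in\beta\ben$ with $D\in p$, and let $D^\star=\{x\in D:-x+D\in p\}$, so that $x\in D^\star$ implies $-x+D^\star\in p$ by \cite[Lemma 4.14]{HS}. Build $\vec x$ one entry at a time. At stage $n$, having chosen $x_0,\ldots,x_{n-1}$, only finitely many rows of $C$ mention $x_n$ together with previously chosen variables; let $B_n$ be the intersection of $D^\star$ with $\{x>m\}$ and with the finitely many translates $-a+D^\star$ needed to force every new row value into $D^\star$. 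Then $B_n\in p$, so we can pick $x_n\in B_n$, distinct from all previous entries and making the entries of $C\vec x$ corresponding to new rows distinct from all previous ones. The standard induction gives a vector $\vec x$ witnessing strong central IPR.

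For the non-domination claim, I would choose a very fast-growing $\vec x\in\ben^\omega$ (for instance $x_{j+1}$ divisible by $p^{x_j}$ for a suitable prime $p$ and with $x_{j+1}$ much larger than all sums of multiples of $x_0,\ldots,x_j$ bounded by the entries of the finite IPR matrices $B_0,\ldots,B_j$). For such $\vec x$, $Im(\mathbf D\vec x)=FS(\langle Y_n\rangle_{n=0}^\infty)$ where each $Y_n=Im(B_n\vec y_n)$ lives in an interval far below $\min Y_{n+1}$, and consequently every element of $Im(\mathbf D\vec x)$ has a \emph{unique} representation as a sum of at most one element from each $Y_n$. Suppose for contradiction some $\vec y\in\ben^\omega$ satisfies $Im(C\vec y)\subseteq Im(\mathbf D\vec x)$. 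Then for every coupled pair $(m,n)$ used in $C$ all four of $y_m,y_n,y_m+y_n,2y_m+y_n$ lie in the FS-set, so by uniqueness of representation $y_m$ and $y_n$ have disjoint block-supports and the identity $2y_m+y_n=y_m+(y_m+y_n)$ forces the block-support of $y_m+y_n$ to again be disjoint from that of $y_m$, while the value $2y_m$ must itself be realised inside $FS$. That in turn requires, for every $m$ appearing as a left index of some pair, a block $B_{k(m)}$ whose image contains both $y_m$ and $2y_m$ as sub-sums. By the rapid-growth choice and the fact that only a specific finite list of the $B_k$'s admit the required ``$2\cdot(\cdot)$ inside a single image'' phenomenon, a pigeonhole on the infinitely many pairs $(m,n)$ appearing in $C$ produces a contradiction.

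The main obstacle is threading the needle in the construction of $C$: it must be sparse and structured enough (column sums $\le 4$, entries in $\{0,1,2\}$) to remain IPR and to be amenable to the minimal-idempotent induction, yet rich enough (enough coupled pairs generating the $2y_m+y_n$ relations) to force the pigeonhole contradiction against $\mathbf D$. In particular, checking that every finite submatrix of $C$ is IPR (which the strong-centrality induction silently requires) and simultaneously ensuring that the inter-block coupling pattern forbids the FS-representation described above is the most substantive step; everything else is a routine adaptation of Theorem \ref{DHScentral} and the analysis of $\mathbf D$-images.
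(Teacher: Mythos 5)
Your proposal diverges from the paper's construction in a way that opens a genuine gap, and the gap is in the half of the argument you describe as ``a routine adaptation of Theorem \ref{DHScentral}.'' Your matrix is built from rows $2e_m+e_n$ for single coupled pairs, so it has bounded row sums. For such rows the minimal-idempotent induction breaks down: at the step where $x_n$ must be chosen, you need $-2x_m+C^\star\in p$, and this does \emph{not} follow from $x_m\in C^\star$ (which only gives $-x_m+C^\star\in p$); it would require $C^\star\in 2p+p$, which fails for general central sets. This is exactly the obstruction behind Theorem \ref{MTsep} and the reason Section 6 must pass to \emph{translates} $b+MT(\langle 2,1\rangle,\cdot)$ via the group structure of $p+\beta\ben+p$. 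The paper sidesteps this by taking $A$ with rows $2e_i+\sum_{j=2^i}^{2^{i+1}-1}e_j$ --- a single $2$ together with an exponentially growing block of $1$'s, hence \emph{unbounded} row sums --- and citing \cite[Corollary 3.8]{BHLS} for strong central IPR; the growing blocks are precisely what makes that result applicable. That your bounded-row-sum variant would, if it worked, settle Question \ref{qbdedrows} (which the authors leave open) is strong evidence that this step cannot be made routine. A smaller error: your column-sum accounting gives $1+2+1+1+1=6$, not $3$ or $4$.

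On the non-domination half you are closer in spirit but vaguer in mechanism. The paper passes to ${\bf D'}$ (the $(m,p,c)$-version, via Theorem \ref{imequiv}), chooses $x_{i,j}=b_i^{j+1}$ with the $b_i$ growing fast and $b_{i-1}\mid b_i$, and exploits the first-entries structure: every element of the $q$-th block image has the signature $c(q)b_q^{l}+db_q^{l+1}$ with $l\in\nhat{m(q)}$. Additivity of the projection $\pi_q$ on a row value $2y_v+\sum_{k=2^v}^{2^{v+1}-1}y_k$ then forces a strictly decreasing sequence of positions $l_0>l_1>\cdots\geq 1$, impossible after $m(q)$ steps. Your alternative --- requiring $2y_m$ itself to be realised in the $FS$-set and pigeonholing on ``which $B_k$ admit doubling'' --- is not what the paper needs and is not obviously repairable, since ${\bf D}$ enumerates \emph{all} finite IPR matrices and no uniform finite list controls which images contain a pair $\{a,2a\}$. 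If you adopt the paper's matrix and its base-$b_q$ digit descent, this half goes through; but as written, neither half of your plan is complete.
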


\begin{proof} Let $A$ be the $\omega\times\omega$ matrix such that, for
$i,j\in\omega$,
$$a_{i,j}=\left\{\begin{array}{cl}
0&\hbox{if }j<i\\
2&\hbox{if }j=i\\
0&\hbox{if }i<j<2^i\\
1&\hbox{if }2^i\leq j<2^{i+1}\\
0&\hbox{if }2^{i+1}\leq j\end{array}\right.$$
so that 
$$A=\left(\begin{array}{cccccccccc}
2&1&0&0&0&0&0&0&0&\ldots\\
0&2&1&1&0&0&0&0&0&\ldots\\
0&0&2&0&1&1&1&1&0&\ldots\\
\vdots&\vdots&\vdots&\vdots&\vdots&\vdots&\vdots&\vdots&\vdots&\ddots
\end{array}\right)$$
Let $I$ be the $\omega\times\omega$ identity matrix and
let $C=\left(\begin{array}{c}I\\ A\end{array}\right)$.
By \cite[Corollary 3.8]{BHLS} 
$C$ is strongly centrally IPR.  We shall show that
$C$ is not image dominated by ${\bf D}$ for which it suffices in view
of Theorem \ref{imequiv} to show that $C$ is not image dominated by ${\bf D'}$.
Let $\langle B'_n\rangle_{n=0}^\infty$ be as in the construction of
${\bf D'}$ where each $B'_n$ is an $\big(m(n),p(n),c(n)\big)$-matrix.

Define a function $f$ on $\omega$ by $f(0)=1$ and $f(i+1)=2^{f(i)+1}-1$.
We will define $\langle x_{i,j}\rangle_{j=0}^{m(i)-1}$ by induction on $i$. 

When we have defined $\langle x_{i,j}\rangle_{j=0}^{m(i)-1}$, we will let $S_i$ be the
set of entries of 
$$B'_i\left(\begin{array}{c}x_{i,0}\\
\vdots\\ x_{i,m(i)-1}\end{array}\right)$$
and let $M_i=\max\{S_0+S_1+\ldots+S_i\}$.

Pick $b_0>\max\{(2+2^{f(m(0))})c(0),p(0)\}$ and for $j\in\ohat{m(0)-1}$, let
$x_{0,j}=b_0^{j+1}$.  Let $i>0$ and assume we have chosen $\langle x_{i-1,j}\rangle_{j=0}^{m(i-1)-1}$,
$S_{i-1}$, and $M_{i-1}$.  Pick $b_i>\max\{(2+2^{f(m(i))})c(i),(2+2^{f(m(i))})M_{i-1},p(i)\}$ such that
$b_{i-1}$ divides $b_i$. For $j\in\ohat{m(i)-1}$, let $x_{i,j}=b_i^{j+1}$.

Notice that since $b_i>p(i)$ we have that expressions in 
$$B_i\left(\begin{array}{c}x_{i,0}\\
\vdots\\ x_{i,m(i)-1}\end{array}\right)$$
are unique.  That is, if $\vec r$ and $\vec s$ are rows of $B_i$ and 
$$\vec r\left(\begin{array}{c}x_{i,0}\\
\vdots\\ x_{i,m(i)-1}\end{array}\right)=\vec s\left(\begin{array}{c}x_{i,0}\\
\vdots\\ x_{i,m(i)-1}\end{array}\right)\,,$$
then $\vec r=\vec s$.

Notice that, given $y\in S_i$, there exist some $l\in\nhat{m(i)}$ and some
$d\in\omega$ such that $y=c(i)b_i^l+db_i^{l+1}$.

The construction being complete, let $$\vec z=\left(\begin{array}{c}x_{0,0}\\
\vdots\\ x_{0,m(0)-1}\\
x_{1,0}\\ \vdots\\ x_{1,m(1)-1}\\ \vdots\end{array}\right)$$
and let $J=Im({\bf D'}\vec z)$. 
Notice that $J=\bigcup_{F\in\pf(\omega)}\sum_{i\in F}S_i=FS(\langle S_i\rangle_{i=0}^\infty)$.

\begin{definition}\label{defSupp}
For $y\in J$, Supp$(y)$ is that
$F\in\pf(\omega)$ such that $y\in \sum_{i\in F}S_i$.
For $y\in J$ and $i\in\ben$, $\pi_i(y)=0$ if $i\notin\hbox{Supp}(y)$ and 
otherwise, $\pi_i(y)\in S_i$ such that $y=\sum_{i\in\hbox{\smallrm Supp}(y)}\,\pi_i(y)$.
\end{definition}

Given $i\in\omega$, we have that two elements of $S_{i+1}$ differ by at least
$b_{i+1}$ and $b_{i+1}>M_i$ so expressions in 
$\sum_{i\in\hbox{\smallrm Supp}(y)}S_i$ are unique and thus $\pi_i$ is well defined.

 We claim that there is no $\vec y\in\ben^\omega$
such that $Im(C\vec y)\subseteq J$, so suppose instead that we have
such $\vec y$. Let $q=\min\hbox{Supp}(y_0)$.  (Any other member of $\hbox{Supp}(y_0)$ would do
just as well, with no change in the proof.)

\begin{lemma}\label{pilem} Let $v\in \big\{0,1,\ldots,f\big(m(q)\big)\big\}$.  Then
$$\textstyle\pi_q(2y_v+\sum_{k=2^v}^{2^{v+1}-1}y_k)=2\pi_q(y_v)+\sum_{k=2^v}^{2^{v+1}-1}\pi_q(y_k)\,.$$
\end{lemma}

\begin{proof} Let $z=2y_v+\sum_{k=2^v}^{2^{v+1}-1}y_k$.  Then $z\in J$ so pick
$a_0\in\omega$ such that $z=a_0b_{q+1}+\pi_q(z)+\sum_{i=0}^{q-1}\pi_i(z)$. 
(Here $a_0b_{q+1}=\sum\{\pi_i(z):i\in \hbox{Supp}(z)$ and $i>q\}$ if 
$\{i\in\hbox{Supp}(z):i>q\}\neq\emp$.)

For each
$k\in\{v\}\cup\{2^v,2^v+1,\ldots,2^{v+1}-1\}$, pick $a_k\in\omega$ such that
$y_k=a_kb_{q+1}+\pi_q(y_k)+\sum_{i=0}^{q-1}\pi_i(y_k)$.
Then 
$$\begin{array}{rl}z=&\textstyle(2a_v+\sum_{k=2^v}^{2^{v+1}-1}a_k)b_{q+1}+{}\\
&\textstyle 2\pi_q(y_v)+\sum_{k=2^v}^{2^{v+1}-1}\pi_q(y_k)+{}\\
&\textstyle \sum_{i=0}^{q-1}\big(2\pi_i(y_v)+\sum_{k=2^v}^{2^{v+1}-1}\pi_i(y_k)\big)\,.\end{array}$$
Now $\sum_{i=0}^{q-1}\big(2\pi_i(y_v)+\sum_{k=2^v}^{2^{v+1}-1}\pi_i(y_k)\big)\leq (2^v+2)M_{q-1}\leq
(2^{f(m(q))}+2)M_{q-1}<b_q$.
And of course $\sum_{i=0}^{q-1}\pi_i(z)\leq M_{q-1}<b_q$.

Since also $b_q$ divides $a_0b_{q+1}+\pi_q(z)$ and $b_q$ divides
$$\textstyle(2a_v+\sum_{k=2^v}^{2^{v+1}-1}a_k)b_{q+1}+2\pi_q(y_v)+\sum_{k=2^v}^{2^{v+1}-1}\pi_q(y_k)$$
we have that 
$$\textstyle(2a_v+\sum_{k=2^v}^{2^{v+1}-1}a_k)b_{q+1}+2\pi_q(y_v)+\sum_{k=2^v}^{2^{v+1}-1}\pi_q(y_k)=
a_0b_{q+1}+\pi_q(z)\,.$$

Similarly $2\pi_q(y_v)+\sum_{k=2^v}^{2^{v+1}-1}\pi_q(y_k)<b_{q+1}$ and $\pi_q(z)<b_{q+1}$ so these
are equal as claimed. 
\end{proof}

\begin{lemma}\label{geqtwo}  Let $v\in\big\{0,1,\ldots,f\big(m(q)\big)\big\}$ such that $\pi_q(y_v)\neq 0$.
Pick $l\in\nhat{m(q)}$ and $d\in\omega$ such that
$\pi_q(y_v)=c(q)b^l+db_q^{l+1}$.  Then $l\geq 2$ and for some $i\in\{2^v,2^v+1,\ldots,2^{v+1}-1\}$,
some $l'\in\nhat{l-1}$, and some $d'\in\omega$, $\pi_q(y_i)=c(q)b^{l'}+d'b_q^{l'+1}$.
\end{lemma}

\begin{proof} Since $\pi_q(y_v)\neq 0$ we have by Lemma \ref{pilem} that 
$\pi_q(2y_v+\sum_{k=2^v}^{2^{v+1}-1}y_k)\neq 0$.  Pick $t\in\nhat{m(q)}$ and $e\in\omega$ such that
$\pi_q(2y_v+\sum_{k=2^v}^{2^{v+1}-1}y_k)=c(q)b_q^t+eb_q^{t+1}$.

Let $H=\{k\in\{2^v,2^v+1,\ldots,2^{v+1}-1\}:\pi_q(y_k)\neq 0\}$.  If $H=\emp$, then
by Lemma \ref{pilem}, $c(q)b_q^t+eb_q^{t+1}=2c(q)b_q^l+2db_q^{l+1}$ so, since $b_q>2c(q)$, we have
$t=l$ and $c(q)=2c(q)$, a contradiction.  So $H\neq\emp$.

For $k\in H$, pick $l_k\in\nhat{m(q)}$ and $d_k\in\omega$ such that 
$\pi_q(y_k)=c(q)b_q^{l_k}+d_kb_q^{l_k+1}$.  We need to show that some
$l_k<l$, so suppose instead that each $l_k\geq l$. We have by Lemma \ref{pilem} that
$$\textstyle c(q)b_q^t+eb_q^{t+1}=2c(q)b_q^l+2db_q^{l+1}+\sum_{k\in H}(c(q)b_q^{l_k}+d_kb_q^{l_k+1})\,.$$
If each $l_k>l$ we again conclude that $t=l$ and $c(q)=2c(q)$.  Let 
$K=\{k\in H:l_k=l\}$ and let $\delta=|K|$.  Then we get
$$\textstyle 2c(q)b_q^l+2db_q^{l+1}+\sum_{k\in H}(c(q)b_q^{l_k}+d_kb_q^{l_k+1})=
(2+\delta)c(q)b_q^l+\alpha b_q^{l+1}$$ for some $\alpha\in\omega$.
But $\delta\leq 2^v\leq 2^{f(m(q))}$ so
$(2+\delta)c(q)\leq (2+2^{f(m(q))})c(q)<b_q$ so $t=l$ and $c(q)=(2+\delta)c(q)$,
a contradiction.\end{proof}

We are now ready to complete the proof of the theorem.
Pick $l_0\in\{1,2,\ldots,\break 
m(q)\}$ and $d_0\in\omega$ such that $\pi_q(y_0)=c(q)b_q^{l_0}+d_0b_q^{l_0+1}$.
By Lemma \ref{geqtwo}, $l_0\geq 2$ and we may pick $i(1)=1$,
$d_1\in\omega$, and $l_1\in\nhat{l_0-1}$ such that $\pi_q(y_{i(1)})=c(q)b_q^{l_1}+d_1b_q^{l_1+1}$.

Given $t\geq 1$, $i(t)$, $l_t$, and $d_t$ such that
$i(t)\leq f(t)\leq f(m(q))$ and $\pi_q(y_{i(t)})=c(q)b_q^{l_t}+d_tb_q^{l_t+1}$, pick
by Lemma \ref{geqtwo}, $i(t+1)\in \{2^{i(t)},2^{i(t)}+1,\ldots,2^{i(t)+1}-1\}$,
$l_{t+1}<l_t$, and $d_{t+1}\in\omega$ such that 
$\pi_q(y_{i(t+1)})=c(q)b_q^{l_{t+1}}+d_{t+1}b_q^{l_{t+1}+1}$.
Then $i(t+1)\leq 2^{i(t)+1}-1\leq 2^{f(t)+1}-1=f(t+1)$.
Also $m(q)\geq l_0>l_1>\ldots>l_{t+1}$ so $m(q)>t+1$ and thus
$i(t+1)\leq f(t+1)<f(m(q))$.  When $t+1=m(q)$ we have a contradiction.\end{proof}

Note that the matrix of Theorem \ref{Anodom} has unbounded row sums (as does ${\bf D'}$).

\begin{question}\label{qbdedrows} Let $A$ be an $\omega\times\omega$ centrally IPR
matrix with the property that\break $\{\sum_{j=0}^\infty|a_{i,j}|:i<\omega\}$ is bounded.
Must $A$ be image dominated by ${\bf D}$?\end{question}

\section{Translates of MT-Matrices}

As we saw in Theorem \ref{MTsep}, if $k\in\ben$, 
$\vec a=\langle a_0,a_1,\ldots,a_k\rangle$ and 
$M$ is an $MT(\vec a)$ matrix, then 
$\left(\begin{array}{cc} M&{\bf O}\\{\bf O}& {\bf F}\end{array}\right)$
is not IPR.  We shall see in
Theorem \ref{MTtran}, that 
$\left(\begin{array}{cc} \overline 1&M\\ \overline 0& {\bf F}\end{array}\right)$
is partition regular, where $\overline 1$ and $\overline 0$ are the constant
length $\omega$ column vectors.  That is, given any finite colouring of $\ben$, there
must exist a sequence $\vec x=\langle x_n\rangle_{n=0}^\infty$ and $b\in\ben$
such that $FS(\vec x)\cup \big(b+MT(\vec a,\vec x)\big)$ is monochromatic.

Given $a\in\bez$ and $p\in\beta\ben$, by $ap$ we mean the product in
$(\beta\bez,\cdot)$.  (If $p\in\ben^*$ it is not even true that
$2p=p+p$.) If $A\subseteq \bez$, then $A\in ap$ if and only if
$a^{-1}A\in p$.  Since $\ben\in p$, then $A\in ap$ if and only if
$\{x\in \ben:ax\in A\}\in p$.

The basic algebraic property of $\beta\ben$ used in the following lemma is that
$p+\beta\ben+p$ is a group in $\beta\ben$ whenever $p$ is an idempotent in the smallest ideal of
$\beta\ben$.

\begin{lemma}\label{lemtran} Let $k\in\ben$ and let
$\vec a=\langle a_0,a_1,\ldots,a_k\rangle$ be a compressed sequence
in $\bez\setminus\{0\}$ with $a_k=1$.  Let $p$ be a minimal idempotent
in $\beta\ben$ and let $A\in p$.  There exists $b\in \ben$ such that
$-b+A\in a_0p+a_1p+\ldots+a_kp$.
\end{lemma}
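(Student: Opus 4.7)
The plan is to find an ultrafilter $r\in\beta\ben$ with $r+q=p$, where $q=a_0p+a_1p+\ldots+a_kp$. The conclusion will follow immediately: from $A\in p=r+q$ one gets $\{x\in\bez:-x+A\in q\}\in r$, and since $\ben\in r$, some $b\in\ben$ belongs to this set, giving $-b+A\in q$.

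The driving observation is that $q+p=q$, which uses $a_k=1$ crucially. Indeed $q=(a_0p+\ldots+a_{k-1}p)+a_kp=(a_0p+\ldots+a_{k-1}p)+p$, so $q+p=(a_0p+\ldots+a_{k-1}p)+(p+p)=q$, since $p+p=p$. By associativity $p+q+p=p+(q+p)=p+q$, which displays $p+q$ in the form $p+(q)+p$. One then checks that $q\in\beta\ben$ (so that $p+q\in\beta\ben$ and the displayed expression actually lies in $p+\beta\ben+p$). This is routine: downward induction, starting from $a_kp=p\in\beta\ben$ and using that $p$ is non-principal so that $-a_jy+\ben$ is cofinite in $\ben$ (hence belongs to any ultrafilter in $\beta\ben$ we have already obtained), shows that each partial sum $a_jp+\ldots+a_kp$ lies in $\beta\ben$.

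Since $p$ is a minimal idempotent of $\beta\ben$, the set $p+\beta\ben+p$ is a group with identity $p$. So $p+q$ admits an inverse $h\in p+\beta\ben+p\subseteq\beta\ben$ with $h+(p+q)=p$. Setting $r:=h+p\in\beta\ben$, associativity gives $r+q=(h+p)+q=h+(p+q)=p$, as required. There is no substantial obstacle: the entire content of the lemma is packaged into the identity $q+p=q$, which places $p+q$ inside the group $p+\beta\ben+p$ and lets us invert. The only sanity check, that $q\in\beta\ben$, is straightforward and is already implicit in the ultrafilter-based proof of the Milliken--Taylor theorem sketched in Section 2.
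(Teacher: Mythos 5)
Your proof is correct and follows essentially the same route as the paper: both exploit $a_k=1$ to write the sum so that it absorbs a trailing $+p$, place $p+a_0p+\cdots+a_kp$ inside the group $p+\beta\ben+p$, and invert there to produce an ultrafilter in $\beta\ben$ from which $b$ is extracted. The only (cosmetic) difference is that you certify membership in $p+\beta\ben+p$ by showing directly that $q=a_0p+\cdots+a_kp\in\ben^*$ by induction on partial sums, whereas the paper notes that $a_0p+\cdots+a_{k-1}p\in\beta\bez$ and uses the left-ideal property of $\ben^*$ in $(\beta\bez,+)$ to conclude $p+\beta\bez+p=p+\beta\ben+p$.
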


\begin{proof} By \cite[Exercise 4.3.5]{HS}, $\ben^*$ is a left ideal
of $(\beta\bez,+)$, so $\beta \ben+p\subseteq \beta \bez+p=
\beta\bez+p+p\subseteq\ben^*+p\subseteq\beta \ben+p$. Therefore
$$\begin{array}{rl}p+a_0p+a_1p+\ldots+a_kp&\hskip -4 pt=p+(a_0p+\ldots+a_{k-1}p)
+p\\
&\hskip -4 pt\in p+\beta\bez+p\\
&\hskip -4 pt=p+\beta\ben+p\end{array}$$ and, since $p$ is minimal,
$p+\beta\ben+p$ is a group.  Pick $q\in p+\beta\ben+p$ such that
$q+p+a_0p+a_1p+\ldots+a_kp=p$.  Since $q+p=q$, 
$A\in q+a_0p+a_1p+\ldots+a_kp$ so
$\{x\in\ben:-x+A\in a_0p+a_1p+\ldots+a_kp\}\in q$.
Pick $b\in \{x\in\ben:-x+A\in a_0p+a_1p+\ldots+a_kp\}$.\end{proof}

Before giving the proof of Theorem \ref{MTtran} in the general case, we shall first 
give the proof for a  simple
special case. We should like the reader to understand the simple idea underlying the proof,
before having to read the rather daunting details of the general proof.

\begin{theorem}\label{simplecase} Let $\vec a=\langle 2,1\rangle$, let $p$ be a minimal
idempotent in $\beta\ben$ and let $A\in p$. Then there exist
$b\in\ben$ and a sequence $\langle x_n\rangle_{n=0}^{\infty}$ in $\ben$ such that
$FS(\langle x_n\rangle_{n=0}^\infty)\subseteq A$ and 
 $b+MT(\vec a\langle x_n\rangle_{n=0}^\infty)\subseteq A$.
\end{theorem}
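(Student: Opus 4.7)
The plan is to combine the Galvin--Glazer argument for the Finite Sums Theorem with the standard Milliken--Taylor construction into a single recursion, using Lemma \ref{lemtran} to supply the translate. First, I would set $A^{\star}=\{x\in A:-x+A\in p\}$ (which lies in $p$) and apply Lemma \ref{lemtran} with $\vec a=\langle 2,1\rangle$ (so $a_k=1$, as required) to $A^{\star}$, producing $b\in\ben$ with $-b+A^{\star}\in 2p+p$. This $b$ will be the translate sought in the conclusion.

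I would then build $\langle x_n\rangle_{n=0}^{\infty}$ by induction on $n$, maintaining after stage $n$ the following four hypotheses, with all indices drawn from $\{0,1,\ldots,n\}$:
\begin{itemize}
\item[(I)] $\sum_{i\in F}x_i\in A^{\star}$ for every nonempty $F$;
\item[(II)] $b+2\sum_{i\in F_0}x_i+\sum_{i\in F_1}x_i\in A^{\star}$ for every $F_0<F_1$;
\item[(III)] $-\bigl(b+2\sum_{i\in F_0}x_i\bigr)+A^{\star}\in p$ for every nonempty $F_0$;
\item[(IV)] $-\bigl(b+2\sum_{i\in F_0}x_i\bigr)+A^{\star}\in 2p+p$ for every $F_0$, including $F_0=\emp$ (which is the output of Step 1).
\end{itemize}
At stage $n+1$, each new instance of (I)--(IV) (now allowing $n+1$ in the index sets) translates into a constraint of the form ``$x_{n+1}\in$ (certain set).'' Using Lemma 4.14 of \cite{HS} (if $S\in p$ and $y\in S^{\star}$, then $-y+S^{\star}\in p$) together with the rewriting $X\in 2p+p\Longleftrightarrow\{y:-2y+X\in p\}\in p$, each of these sets is in $p$; their finite intersection is therefore in $p$ and in particular nonempty, and any $x_{n+1}$ chosen from it will preserve (I)--(IV). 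The resulting sequence will satisfy $FS(\langle x_n\rangle_{n=0}^{\infty})\subseteq A$ by (I) and $b+MT(\vec a,\langle x_n\rangle_{n=0}^{\infty})\subseteq A$ by (II).

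The hard part, and the only step beyond a mechanical merger of the two classical arguments, is recognising the need for the auxiliary hypothesis (IV). Condition (III) is the natural Milliken--Taylor invariant, but (III) by itself cannot be preserved inductively: to allow a future $x_m$ to be absorbed into a partial sum $\sum_{F_0'}x_i$ sitting on the coefficient-$2$ side of the expression, one needs the shifted set $-\bigl(b+2\sum_{F_0'}x_i\bigr)+A^{\star}$ to belong to $2p+p$, not merely $p$, so that the set of valid $x_m$ is in $p$. That is exactly what (IV) records, and its preservation through the induction is one further routine application of the star operation, this time applied to the $p$-large witness $\{y:-2y+(-b-2\sum_{F_0'}x_i+A^{\star})\in p\}$.
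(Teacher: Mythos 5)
Your proposal is correct and follows essentially the same route as the paper: obtain $b$ from Lemma \ref{lemtran} so that $-b+A\in 2p+p$, then run a single Galvin--Glazer-style recursion using \cite[Lemma 4.14]{HS} and the equivalence $X\in 2p+p\Leftrightarrow\{y:-2y+X\in p\}\in p$. Your invariants (III) and (IV) are just the unfolded form of the paper's requirement that all partial sums lie in $B^{\star}$ for $B=\{x:-2x+(-b+A)\in p\}$, and (II) matches its condition $\sum_{n\in G}x_n\in B\bigl(\sum_{m\in F}x_m\bigr)^{\star}$, so the difference is purely one of bookkeeping.
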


\begin{proof} 
By Lemma \ref{lemtran} We can choose $b\in \ben$ such that $-b+A\in 2p+p$.
Given $B\in p$, let $B^\star=\{x\in B:-x+B\in p\}$.  By
\cite[Lemma 4.14]{HS}, $B^{\star}\in p$ and, if $x\in B^{\star}$, then 
$-x+B^\star\in p$. 

We put $B=\{x\in \ben:2x+p\in \overline{-b+A}\}$ and, for each
$x\in B$, we put $B(x)=\{y\in \ben:2x+y\in -b+A\}$. We observe that $B$ and $B(x)$
are members of $p$.

We shall inductively construct a sequence $\langle x_n\rangle_{n=0}^{\infty}$ in $\ben$ 
 such that $$FS(\langle x_n\rangle_{n=0}^{\infty})\subseteq A^{\star}\cap B^{\star}$$ and, whenever
$F,G\in\pf(\ben)$ and $F<G$,  then $\sum_{n\in G}x_n\in B(\sum_{m\in F}x_m)^{\star}$.

We choose any $x_0\in B^{\star}$. We then assume that $r\geq 0$ and that we have
chosen a sequence $\langle x_0,x_1,x_2\ldots,x_r\rangle$ so that  $FS(\langle x_n\rangle_{n=0}^r)\subseteq A^{\star}\cap B^{\star}$,  and, whenever
$F,G\in\pf(\{1,2,\ldots,r\})$ and $F<G$,
then  $\sum_{n\in G}x_n\in B(\sum_{m\in F}x_m)^{\star}$.

If $F\in\pf(\ohat{r})$, then the following sets are all members of $p$:
 $$\textstyle -\sum_{m\in F}x_m+A^\star\,,\,  -\sum_{m\in F}x_m+B^\star\hbox{ and }
B(\sum_{m\in F}x_m)^\star\,.$$ Furthermore, if $G\in\pf(\{1,2,\ldots,r\})$ and $F<G$, then
 $$\textstyle -\sum_{n\in G}x_n+B(\sum_{m\in F}x_m)^{\star}\in p\,.$$
So all the sets of this form have a non-empty intersection with $A^{\star}\cap B^{\star}$,
and we can choose an element $x_{r+1}\in A^{\star}\cap B^{\star}$  which is in all these sets.
It is then routine to check that our inductive hypotheses extend to the sequence
$\langle x_0,x_1,\ldots,x_r,x_{r+1}\rangle$.
\end{proof}

Note that, in the following theorem, if one wishes, one can
let $\langle \vec a_i\rangle_{i=0}^\infty$ enumerate all of the
compressed sequences in $\bez\setminus\{0\}$ with final term equal to $1$. 
The proof of the following theorem is based on the proof of
\cite[Theorem 17.31]{HS}.  The reader is referred to that 
proof for details involved in verifying the induction hypotheses.

\begin{theorem}\label{MTtran} For each $i<\omega$, let $k(i)\in\ben$ and
let $\vec a_i=\langle a_{i,0},a_{i,1},\ldots,a_{i,k(i)}\rangle$
be a compressed sequence in $\bez\setminus\{0\}$ with $a_{i,k(i)}=1$.
Let $p$ be a minimal idempotent in $\beta \ben$ and let $A\in p$.
There exists  sequences $\langle b_n\rangle_{n=0}^\infty$
and $\langle x_n\rangle_{n=0}^\infty$ in $\ben$ such that 
$FS(\langle x_n\rangle_{n=0}^\infty)\subseteq A$ and 
for each $i\in\omega$, $b_i+MT(\vec a_i,\langle x_n\rangle_{n=i}^\infty)
\subseteq A$.
\end{theorem}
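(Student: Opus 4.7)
The plan is to extend the proof of Theorem~\ref{simplecase} in two directions at once: handling compressed sequences $\vec a_i$ of arbitrary length $k(i)+1$, and handling all countably many such sequences simultaneously. For each $i\in\omega$ and $0\leq j\leq k(i)$, define
$$q_{i,j}=a_{i,j}p+a_{i,j+1}p+\ldots+a_{i,k(i)}p$$
(computed in $\beta\bez$), so that $q_{i,j}=a_{i,j}p+q_{i,j+1}$ for $j<k(i)$ and $q_{i,k(i)}=p$. By Lemma~\ref{lemtran}, pick $b_i\in\ben$ with $-b_i+A\in q_{i,0}$. The fundamental observation we will use repeatedly is that whenever $C\in q_{i,j}$ with $j<k(i)$, the factorization $q_{i,j}=a_{i,j}p+q_{i,j+1}$ gives $\{x\in\ben:-a_{i,j}x+C\in q_{i,j+1}\}\in p$; also, for any $C\in p$, the $p$-star $C^\star=\{z\in C:-z+C\in p\}$ is in $p$ and satisfies $-z+C^\star\in p$ for $z\in C^\star$ (\cite[Lemma~4.14]{HS}).

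I would then construct $\langle x_n\rangle_{n=0}^\infty$ by induction on $r$. Having chosen $x_0,\ldots,x_r$, the invariants to maintain are (I1) $FS(\langle x_n\rangle_{n=0}^r)\subseteq A^\star$ and (I2) for each $i\leq r$ and each \emph{block configuration} $\mathbf F=(F_0,\ldots,F_{j-1})$ with $F_0<\ldots<F_{j-1}$, $F_l\in\pf(\{i,\ldots,r\})$, and $j\leq k(i)$, the set
$$D(\mathbf F)=-b_i-\textstyle\sum_{l<j}a_{i,l}\sum_{t\in F_l}x_t+A$$
belongs to an appropriately starred version of $q_{i,j}$. This starred version must be designed so that the two legitimate transitions caused by the arrival of $x_{r+1}$ are both available: (a) \emph{starting a new block} $F_j=\{r+1\}$, for which we need $-a_{i,j}x_{r+1}+D\in q_{i,j+1}$ together with its starred refinement, and (b) \emph{augmenting the last block} to $F_{j-1}\cup\{r+1\}$ (only when $j\geq 1$), for which we need $-a_{i,j-1}x_{r+1}+D\in q_{i,j}$.

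At the induction step, $x_{r+1}$ must lie in an intersection of finitely many $p$-sets: the shifts $-s+A^\star$ for $s\in FS(\langle x_n\rangle_{n=0}^r)\cup\{0\}$ maintaining (I1), together with the $p$-sets demanded by moves (a) and (b) for each of the finitely many configurations active at stage $r$. Since a finite intersection of $p$-sets is in $p$, a valid $x_{r+1}$ exists, and a routine verification using the $\star$-properties then shows that (I1) and (I2) extend to stage $r+1$. Once the sequence is built, (I1) gives $FS(\langle x_n\rangle_{n=0}^\infty)\subseteq A$; and for each $i$, given a full MT configuration $F_0<F_1<\ldots<F_{k(i)}$ with $F_l\subseteq\{i,i+1,\ldots\}$, applying (I2) at $j=k(i)$ yields $D(F_0,\ldots,F_{k(i)-1})\in p^\star$, and the nested FS structure then delivers $\sum_{t\in F_{k(i)}}x_t\in D(F_0,\ldots,F_{k(i)-1})$, i.e.\ $b_i+\sum_l a_{i,l}\sum_{t\in F_l}x_t\in A$, so $b_i+MT(\vec a_i,\langle x_n\rangle_{n=i}^\infty)\subseteq A$.

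The main obstacle is to define the starred refinement of membership in $q_{i,j}$ so that both moves (a) and (b) propagate the invariant. Move (b) in particular requires that $D(\mathbf F)$ lie not only in $q_{i,j}$ but also in $a_{i,j-1}p+q_{i,j}=q_{i,j-1}$ (so that the set $\{x:-a_{i,j-1}x+D\in q_{i,j}\}$ is in $p$), and one needs the layered conditions of this kind at every level to be preserved under subsequent shift operations. Because the composite ultrafilters $q_{i,j}$ are not themselves idempotents in general, one cannot apply \cite[Lemma~4.14]{HS} to them directly; instead, the correct $\star$-operation must be built by iterating the $p$-star together with pullbacks through the identities $q_{i,j}=a_{i,j}p+q_{i,j+1}$. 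This bookkeeping is precisely what is carried out in the proof of \cite[Theorem~17.31]{HS}, whose pattern we follow.
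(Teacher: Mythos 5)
Your proposal is correct and follows essentially the same route as the paper: the translates $b_i$ come from Lemma~\ref{lemtran}, $-b_i+A$ is tracked through the composite ultrafilters $a_{i,j}p+\ldots+a_{i,k(i)}p$, and the sequence is built by a block-configuration induction whose detailed verification is deferred to \cite[Theorem~17.31]{HS}, exactly as in the paper. The only real difference is in the bookkeeping you flag as the ``main obstacle'': rather than starring the composite ultrafilter memberships, the paper handles the block-augmentation move by routing it through the ordinary $p$-stars of a nested sequence of sets $B_n\in p$ (so that a sum $\sum_{t\in F_l}x_t$ extended by $x_{r+1}$ stays in $B_{\min F_l}^\star$, which is already contained in the required shifted set by the hypotheses (IV)/(V)), which is precisely the resolution you anticipate from \cite[Theorem~17.31]{HS}.
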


\begin{proof} For each $i\in\omega$, pick by Lemma \ref{lemtran}, $b_i\in
\ben$ such that $-b_i+A\in a_{i,0}p+a_{i,1}p+\ldots+a_{i,k(i)}p$. 
Given $B\in p$, let $B^\star=\{x\in B:-x+B\in p\}$.  By
\cite[Lemma 4.14]{HS}, if $x\in B^\star$, then 
$-x+B^\star\in p$.

Let $B_0=A\cap\{x\in\ben:-a_{0,0}x+(-b_0+A)\in a_{0,1}p+\ldots+a_{0,k(0)}p\}$ and
pick $x_0\in B_0^\star$.

Now let $n\in\omega$ and assume that we have chosen $\langle x_j\rangle_{j=0}^n$
 in $\ben$
and $\langle B_j\rangle_{j=0}^n$ in $p$ so that for each $r\in\ohat{n}$ the following
induction hypotheses hold.
\begin{itemize}
\item[(I)] If $\emp\neq F\subseteq\ohat{r}$ and $i=\min F$, then
$\sum_{t\in F}x_t\in B_i^\star$ and\hfill  $B_i\subseteq \{x\in\ben:
-a_{i,0}x+(-b_i+A)\in a_{i,1}p+\ldots+a_{i,k(i)}p\}$.
\item[(II)] If $r<n$, then $B_{r+1}\subseteq B_r$.
\item[(III)] If $i\in \ohat{r}$, $l\in\ohat{k(i)-1}$,\hfill 
$F_0,F_1,\ldots,F_l\in\pf(\{i,i+1,\ldots,r\})$, and $F_0<F_1<\ldots<F_l$, then 
\hfill 
$-\sum_{j=0}^la_{i,j}\sum_{t\in F_j}x_t+(-b_i+A)\in a_{i,l+1}p+\ldots+a_{i,k(i)}
p$.
\item[(IV)] If $i\in \ohat{r}$, $F_0,F_1,\ldots,F_{k(i)-1}\in
\pf(\{i,i+1,\ldots,r\})$,\hfill  $F_0<F_1<\ldots<F_{k(i)-1}$, and $r<n$, then\hfill 
$B_{r+1}\subseteq -\sum_{j=0}^{k(i)-1}a_{i,j}\sum_{t\in F_j}x_t+(-b_i+A)$.
\item[(V)] If $i\in \ohat{r}$, $l\in\ohat{k(i)-2}$,\hfill 
 $F_0,F_1,\ldots,F_l\in \pf(\{i,i+1,\ldots,r\})$, $F_0<F_1<\ldots<F_l$, and $r<n$, then
$$\begin{array}{rl} B_{r+1}\subseteq\{x\in\ben:&\textstyle\hskip -5 pt
-a_{i,l+1}x+\big(-\sum_{j=0}^la_{i,j}\sum_{t\in F_j}x_t+(-b_i+A)\big)\in{}\\
&\hskip -5 pt a_{i,l+2}p+\ldots+a_{i,k(i)}p\}\,.\end{array}$$
\end{itemize}

For $i\in\ohat{n}$ and $l\in\ohat{k(i)-1}$, let 
$$\begin{array}{rl}{\mathcal F}_{i,l}=\{(F_0,F_1,\ldots,F_l):&\hskip - 5 pt 
F_0,F_1,\ldots,F_l\in\pf(\{i,i+1,\ldots,n\})\hbox{ and }\\
&\hskip -5 pt F_0<F_1<\ldots<F_l\}\,.\end{array}$$
(Of course, if $l>n-i$, then ${\mathcal F}_{i,l}=\emp$.) For $m\in\ohat{n}$, let
$E_m=\{\sum_{t\in F}x_t:\emp\neq F\subseteq\ohat{n}$ and $\min F=m\}$.

In the definition of $B_{n+1}$ below, we use the convention that
$\bigcap\emp=\ben$.  So, for example, if $i\in \ohat{n}$, $l\in\ohat{k(i)-2}$, 
and
${\mathcal F}_{i,l}=\emp$, then one ignores the term
$$\begin{array}{rl}\{x\in\ben:
&\hskip -6 pt\textstyle -a_{i,l+1}x+\big(-\sum_{j=0}^la_{i,j}\sum_{t\in F_j}x_t
+(-b_i+A)\big)\in{}\\
&\hskip -6 pt a_{i,l+2}p+\ldots+a_{i,k(i)}p\}\,.\end{array}$$

Let
$$\begin{array}{rl}
B_{n+1}=&\hskip -6 pt
\{x\in\ben:-a_{n+1,0}x+(-b_{n+1}+A)\in a_{n+1,1}p+\ldots+a_{n+1,k(n+1)}p\}\cap{}
\\
&\hskip -6 pt\textstyle B_n\cap\bigcap_{m=0}^n\bigcap_{c\in E_m}(-c+B_m^\star)
\cap{}\\
&\hskip -6 pt\textstyle\bigcap_{i=0}^n\bigcap_{(F_0,\ldots,F_{k(i)-1})\in 
{\mathcal F}_{i,k(i)-1}}
-\sum_{j=0}^{k(i)-1}a_{i,j}\sum_{t\in F_j}x_t+(-b_i+A)\cap{}\\
&\hskip -6 pt 
\bigcap_{i=0}^n\bigcap_{l=0}^{k(i)-2}
\bigcap_{(F_0,\ldots,F_l)\in {\mathcal F}_{i,l}}\\
&\hskip -12 pt\begin{array}{rl}\{x\in\ben:
&\hskip -6 pt\textstyle -a_{i,l+1}x+\big(-\sum_{j=0}^la_{i,j}\sum_{t\in F_j}x_t
+(-b_i+A)\big)\in{}\\
&\hskip -6 pt a_{i,l+2}p+\ldots+a_{i,k(i)}p\}\,.\end{array}\end{array}$$
Then $B_{n+1}\in p$.  Pick $x_{n+1}\in B_{n+1}^\star$.

The induction being complete, we have that 
$FS(\langle x_n\rangle_{n=0}^\infty)\subseteq A$ by hypotheses (I) and (II) 
and the fact that $B_0\subseteq A$. Finally, let $i\in\omega$ and 
let $F_0,F_1,\ldots,F_{k(i)}$ be given in $\pf(\{i,i+1,\ldots\})$ such that 
$F_0<F_1<\ldots<F_{k(i)}$.  Let $n=\min F_{k(i)}$ and $m=\max F_{k(i)-1}$.
By hypotheses (I) and (II), $\sum_{t\in F_{k(i)}}x_t\in B_n\subseteq B_{m+1}$ so
 by hypothesis
(IV), $\sum_{t\in F_{k(i)}}x_t\in -\sum_{j=0}^{k(i)-1}a_{i,j}\sum_{t\in F_j}x_t+
(-b_i+A)$. Thus,
since $a_{i,k(i)}=1$, $b_i+\sum_{j=0}^{k(i)}a_{i,j}\sum_{t\in F_j}x_t+ A$.
\end{proof}

\begin{corollary}\label{MTextndmat} Let $m\in\omega$ and for 
each $i\in\ohat{m}$, let $k(i)\in\ben$,
let $\vec a_i=\langle a_{i,0},a_{i,1},\ldots,a_{i,k(i)}\rangle$
be a compressed sequence in $\bez\setminus\{0\}$ with $a_{i,k(i)}=1$,
and let $M_i$ be an $MT(\vec a_i)$-matrix. Let $\overline 0$ and $\overline 1$ be the length
$\omega$ constant vectors. Then
$$B=\left(\begin{array}{ccccc}\overline 1&\overline 0&\ldots&\overline 0&M_0\\
\overline 0&\overline 1&\ldots&\overline 0&M_1\\
\vdots&\vdots&\ddots&\vdots&\vdots\\
\overline 0&\overline 0&\ldots&\overline 1&M_m\\
\overline 0&\overline 0&\ldots&\overline 0&{\bf F}
\end{array}\right)$$
is centrally IPR.\end{corollary}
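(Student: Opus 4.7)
The plan is to reduce the corollary directly to Theorem~\ref{MTtran}. Given a central $C\subseteq \ben$, I would pick a minimal idempotent $p\in\beta\ben$ with $C\in p$, and extend the finite list $\vec a_0,\vec a_1,\ldots,\vec a_m$ to an infinite sequence of compressed sequences in $\bez\setminus\{0\}$ with final term $1$, for example by setting $\vec a_i=\langle 1\rangle$ for $i>m$. Applying Theorem~\ref{MTtran} to this enumeration and to $A=C$ produces sequences $\langle b_n\rangle_{n=0}^\infty$ and $\langle x_n\rangle_{n=0}^\infty$ in $\ben$ with $FS(\langle x_n\rangle_{n=0}^\infty)\subseteq C$ and $b_i+MT(\vec a_i,\langle x_n\rangle_{n=i}^\infty)\subseteq C$ for every $i\in\omega$.

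The only mismatch with what the corollary requires is that row block $i$ of $B$ evaluated at an input vector $(b_0,\ldots,b_m,\vec y)$ produces the full set $b_i+MT(\vec a_i,\vec y)$, whereas Theorem~\ref{MTtran} only controls the tail $MT(\vec a_i,\langle x_n\rangle_{n=i}^\infty)$. I would bridge this gap by setting $\vec y=\langle x_{n+m}\rangle_{n=0}^\infty$. Any element of $MT(\vec a_i,\vec y)$ has the form $\sum_{j=0}^{k(i)}a_{i,j}\sum_{t\in F_j}x_{t+m}$ with $F_0<\ldots<F_{k(i)}$ in $\pf(\omega)$, and for $i\leq m$ the re-indexing $F'_j=\{t+m-i:t\in F_j\}\in\pf(\omega)$ (satisfying $F'_0<\ldots<F'_{k(i)}$) exhibits this element as $\sum_{j=0}^{k(i)}a_{i,j}\sum_{s\in F'_j}x_{s+i}$, which lies in $MT(\vec a_i,\langle x_n\rangle_{n=i}^\infty)$. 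Hence $b_i+MT(\vec a_i,\vec y)\subseteq C$ for every $i\in\ohat{m}$, and trivially $FS(\vec y)\subseteq FS(\langle x_n\rangle_{n=0}^\infty)\subseteq C$.

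Finally, the image of $B$ on the input vector $(b_0,b_1,\ldots,b_m,\vec y)$ consists, for each row block $i\in\ohat{m}$, of entries in $b_i+Im(M_i\vec y)=b_i+MT(\vec a_i,\vec y)\subseteq C$, and for the bottom row block of entries in $Im({\bf F}\vec y)=FS(\vec y)\subseteq C$. Thus every entry of $B\cdot(b_0,\ldots,b_m,\vec y)$ lies in $C$, so $B$ is centrally IPR. I do not expect any serious obstacle: once Theorem~\ref{MTtran} is in hand, the corollary is essentially a matrix-form packaging of it, with the index shift above being the only point where any real thought is required.
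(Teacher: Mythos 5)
Your proof is correct and follows essentially the same route as the paper: pad the list of compressed sequences to an infinite one, invoke Theorem~\ref{MTtran} for the given minimal idempotent, and feed $(b_0,\ldots,b_m,\langle x_{m+n}\rangle_{n=0}^\infty)$ into $B$, with the index shift from $MT(\vec a_i,\vec y)$ into $MT(\vec a_i,\langle x_n\rangle_{n=i}^\infty)$ handled exactly as you describe. The only quibble is that your padding choice $\vec a_i=\langle 1\rangle$ for $i>m$ has $k(i)=0\notin\ben$ and so does not literally satisfy the hypotheses of Theorem~\ref{MTtran} as stated; the paper uses $\langle 2,1\rangle$ instead, and since only the blocks with $i\le m$ are ever used, this is immaterial.
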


\begin{proof} Let $A$ be a central set and pick a minimal idempotent
$p$ such that $A\in p$. For $i>m$ let $\vec a_i=\langle 2,1\rangle$ 
(or any other reasonable choice)
and let $\langle b_n\rangle_{n=0}^\infty$
and $\langle x_n\rangle_{n=0}^\infty$ be as guaranteed by Theorem \ref{MTtran}. 

For $n<\omega$, let $y_n=x_{m+n}$.  Then all entries of 
$$B\left(\begin{array}{c}b_0\\ b_1\\ \vdots\\ b_m\\
\vec y\end{array}\right)$$
are in $A$. \end{proof}

What Theorem \ref{MTtran} is telling us is that, if we are allowed to add new
variables (to represent the `translation') then {\bf F} is very far from being
maximal. This motivates the following definition. We say that 
an IPR matrix $A$ is {\it universally image maximal\/}
provided that whenever $B$ is an IPR matrix that
image dominates $A$, then $A$ image dominates $B$

Is {\bf D} universally image maximal? One might hope that the answer is yes,
but it turns out that, similarly to Theorem \ref{MTtran}, one can actually
extend {\bf D} by a translate of what one might call a  `DHMT' system. 

\begin{definition}\label{defMTY} Let $k\in\ben$, let $\vec a=\langle a_0,a_1,\ldots,a_k\rangle$ and for each $n<\omega$ let 
$Y_n\in\pf(\beq)$.  Then $MT(\vec a,\langle Y_n\rangle_{n=0}^\infty)=
\{\sum_{i=0}^ka_i\sum_{t\in F_i}x_t:F_0,F_1,\ldots,F_k\in \pf(\omega)\,,\,
F_0<F_1<\ldots<F_k\hbox{ and }x\in\bigtimes_{t\in\bigcup_{i=0}^k F_i}Y_t\}$.
\end{definition}

Fix an enumeration $\langle B_n\rangle_{n=0}^\infty$ of the 
finite IPR matrices with rational entries.  
For each $n$, assume that $B_n$ is a $u(n)\times v(n)$ matrix.

\begin{theorem} Let $p$ be a minimal idempotent in $\beta\ben$ and let 
$A\in p$.  There exist $b\in\ben$ and a sequence $\langle Y_n\rangle_{n=0}^\infty$
in $\pf(\ben)$ such that each $Y_n$ is the set of entries of an image of $B_n$ and
$FS(\langle Y_n\rangle_{n=0}^\infty)\cup b+MT(\langle 2,1\rangle,\langle Y_n\rangle_{n=0}^\infty)
\subseteq A$.
\end{theorem}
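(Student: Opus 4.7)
The approach is to amalgamate two proofs already in the paper: the translate construction of Theorem \ref{simplecase} (applied with $\vec a=\langle 2,1\rangle$, so that $a_k=1$ as Lemma \ref{lemtran} requires) with the DH-style induction of Theorem \ref{DHScentral} (which inductively selects $\vec x(n)\in\ben^{v(n)}$ via Theorem \ref{charipr}(f) so that $Y_n:=Im(B_n\vec x(n))$ is contained in any prescribed central member of~$p$).

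First I would invoke Lemma \ref{lemtran} on $\vec a=\langle 2,1\rangle$ to fix $b\in\ben$ with $-b+A\in 2p+p$. Unwinding the definition of $2p+p$ yields an auxiliary set $\mathcal{B}:=\{y\in\ben:-2y+(-b+A)\in p\}\in p$, and for each $y\in\mathcal{B}$ the set $\mathcal{B}(y):=-2y+(-b+A)$ belongs to $p$. For any $E\in p$ write $E^\star$ for $\{x\in E:-x+E\in p\}$; by \cite[Lemma 4.14]{HS}, $E^\star\in p$, and $x\in E^\star$ implies $-x+E^\star\in p$.

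Next I would inductively construct $\vec x(n)$, maintaining after stage $n$:
\begin{itemize}
\item[(i)] $FS(\langle Y_k\rangle_{k=0}^n)\subseteq A^\star\cap\mathcal{B}^\star$;
\item[(ii)] for all $F,G\in\pf(\ohat{n})$ with $F<G$ and all choices $y_k\in Y_k$ $(k\in F\cup G)$, one has $\sum_{k\in G}y_k\in\mathcal{B}(\sum_{k\in F}y_k)^\star$.
\end{itemize}
At stage $n+1$, the hypotheses impose only finitely many constraints on $Y_{n+1}$, each of the form ``lie inside a specific member of $p$'': for every partial sum $u=\sum_{k\in F}y_k$ arising from $F\subseteq\ohat{n}$, one needs $Y_{n+1}\subseteq -u+(A^\star\cap\mathcal{B}^\star)$ and $Y_{n+1}\subseteq \mathcal{B}(u)^\star$; and for every $F<G$ in $\pf(\ohat{n})$ with partial sums $u,v$, one needs $Y_{n+1}\subseteq -v+\mathcal{B}(u)^\star$. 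The intersection $C_{n+1}$ of these finitely many sets with $A^\star\cap\mathcal{B}^\star$ then belongs to $p$, and is in particular central. Applying Theorem \ref{charipr}(f) to the finite IPR matrix $B_{n+1}$ with the central set $C_{n+1}$ furnishes $\vec x(n+1)\in\ben^{v(n+1)}$ with $Y_{n+1}:=Im(B_{n+1}\vec x(n+1))\subseteq C_{n+1}$, and (i), (ii) extend to stage $n+1$ by direct expansion using the $\star$-absorption property.

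Once the induction is complete, $FS(\langle Y_n\rangle_{n=0}^\infty)\subseteq A$ follows from (i). For the MT condition, given $F_0<F_1$ in $\pf(\omega)$ and $y_t\in Y_t$ for $t\in F_0\cup F_1$, set $u_0=\sum_{t\in F_0}y_t$ and $u_1=\sum_{t\in F_1}y_t$; then (ii) at stage $n=\max F_1$ gives $u_1\in\mathcal{B}(u_0)=-2u_0+(-b+A)$, i.e.\ $b+2u_0+u_1\in A$, which is exactly what is required. The main obstacle is just the bookkeeping --- listing out the finitely many $p$-sets to intersect at each step and checking that \cite[Lemma 4.14]{HS} transmits (i), (ii) forward --- but no new algebraic input beyond what is used in Theorems \ref{simplecase} and \ref{DHScentral} is needed; the proof is essentially the superposition of those two.
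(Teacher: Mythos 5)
Your proposal is correct and takes essentially the same route as the paper's own proof: the paper likewise fixes $b$ via Lemma \ref{lemtran}, forms the auxiliary set $D=\{x\in A^\star:-2x+(-b+A^\star)\in p\}\in p$, and runs the same induction, placing $Y_{n+1}=Im(B_{n+1}\vec x(n+1))$ inside a finite intersection of members of $p$ at each stage. The only difference is cosmetic bookkeeping: the paper carries the invariant $MT(\langle 2,1\rangle,\langle Y_k\rangle_{k=0}^n)\subseteq -b+A^\star$ directly, whereas you carry the Theorem \ref{simplecase}-style condition (ii) with ${\mathcal B}(u)^\star$ --- the two are interchangeable.
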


\begin{proof} As in the proof of Lemma \ref{lemtran}, pick $q\in\beta\ben$ 
such that $p=q+2p+p$ and pick
$b\in\ben$ such that $-b+A^\star\in 2p+p$.  Let
$D=\{x\in A^\star:-2x+(-b+A^\star)\in p\}$.  Then $D\in p$.

Choose $\vec x(0)\in\ben^{v(0)}$ such that, letting $Y_0$ be the set of
entries of $B_0\vec x(0)$, we have $Y_0\subseteq D^\star$.

Inductively let $n\in\omega$ and assume that we have chosen $\vec x(k)\in \ben^{v(k)}$
such that, letting $Y_k$ be the set of
entries of $B_k\vec x(k)$, we have that

\begin{itemize}
\item[(1)] $FS(\langle Y_k\rangle_{k=0}^n)\subseteq D^\star$ and
\item[(2)] if $n>0$, then $MT(\langle 2,1\rangle,\langle Y_k\rangle_{k=0}^n)
\subseteq -b+A^\star$.
\end{itemize}

Now, if $x\in FS(\langle Y_k\rangle_{k=0}^n)$, then
$-x+D^\star\in p$ and $-2x+(-b+A^\star)\in p$.  Also, if
$n>0$ and $x\in MT(\langle 2,1\rangle,\langle Y_k\rangle_{k=0}^n)$, 
then $x\in (-b+A^\star)$ so $b+x\in A^\star$ and thus
$-(b+x)+A^\star\in p$.

Pick $\vec x(n+1)\in\ben^{v(n+1)}$
such that 
$$\begin{array}{rl}Y_{n+1}\subseteq&\textstyle D^\star\cap\bigcap_{x\in FS(\langle Y_k\rangle_{k=0}^n)}
\big((-x+D^\star)\cap (-2x+(-b+A^\star))\big)\hfill\\
&\textstyle\cap\bigcap_{x\in MT(\langle 2,1\rangle,\langle Y_k\rangle_{k=0}^n)}(-(b+x)+A^\star))\,.\end{array}$$

To see that $FS(\langle Y_k\rangle_{k=0}^{n+1})\subseteq D^\star$, let $\emp\neq F\subseteq
\ohat{n+1}$ and let $x\in\bigtimes_{t\in F}Y_t$. If $n+1\notin F$, we have that 
$\sum_{t\in F}x_t\in D^\star$ by hypothesis (1). If $F=\{n+1\}$, then $x_{n+1}\in Y_{n+1}
\subseteq D^\star$.  So assume that $\{n+1\}\subsetneq F$ and let $F'=F\setminus\{n+1\}$.
Then $x_{n+1}\in -(\sum_{t\in F'}x_t)+D^\star$ so
$\sum_{t\in F}x_t\in D^\star$.

To verify that $MT(\langle 2,1\rangle,\langle Y_k\rangle_{k=0}^{n+1})
\subseteq -b+A^\star$, let
$F,H\in\pf(\nhat{n+1})$ such that $\max F<\min H$ and let $x\in\bigtimes_{k\in F\cup H}\, Y_k$.
If $\max H<n+1$ the conclusion holds by the hypothesis (2), so assume that
$n+1\in H$. If $H=\{n+1\}$, then $x_{n+1}\in \big(-2\sum_{t\in F}x_t+(-b+A^\star)\big)$
so $\sum_{t\in F}2x_t+x_{n+1}\in -b+A^\star$.

Now assume that $\{n+1\}\subsetneq H$ and let $H'=H\setminus\{n+1\}$. 
Then $x_{n+1}\in -(b+\sum_{t\in F}2x_t+\sum_{t\in H'}x_t)+A^\star$
so $\sum_{t\in F}2x_t+\sum_{t\in H}x_t\in -b+A^\star$ as required.\qed
\end{proof}

We remark that the analogue of Theorem \ref{MTtran} wherein
$\langle x_n\rangle_{n=0}^\infty$ is replaced by 
$\langle Y_n\rangle_{n=0}^\infty$ remains valid with essentially the same
proof.

We do not know any examples of universally image maximal systems.

\begin{question} Does there exist a universally image maximal
matrix?
\end{question}

\bibliographystyle{plain}

\end{document}